\let\oldtocsection=\tocsection
\let\oldtocsubsection=\tocsubsection
\let\oldtocsubsubsection=\tocsubsubsection
\renewcommand{\tocsection}[2]{\hspace{0em}\oldtocsection{#1}{#2}}
\renewcommand{\tocsubsection}[2]{\hspace{1em}\oldtocsubsection{#1}{#2}}
\renewcommand{\tocsubsubsection}[2]{\hspace{2em}\oldtocsubsubsection{#1}{#2}}
\theoremstyle{definition}
\newtheorem{nul}{}[section]
\newtheorem{dfn}[nul]{Definition}
\newtheorem{rmk}[nul]{Remark}
\newtheorem{cnstr}[nul]{Construction}
\newtheorem{exm}[nul]{Example}
\newtheorem{qst}[nul]{Question}
\newtheorem*{dfn*}{Definition}
\newtheorem*{axm*}{Axiom}
\newtheorem*{ntn*}{Notation}
\newtheorem*{exm*}{Example}
\newtheorem*{exr*}{Exercise}
\newtheorem*{int*}{Intuition}
\newtheorem*{qst*}{Question}
\newtheorem*{rmk*}{Remark}
\theoremstyle{plain}
\newtheorem{thm}[nul]{Theorem}
\newtheorem{prop}[nul]{Proposition}
\newtheorem{lem}[nul]{Lemma}
\newtheorem{cor}[nul]{Corollary}
\newtheorem*{thm*}{Theorem}
\newtheorem*{prop*}{Proposition}
\newtheorem*{cor*}{Corollary}
\newtheorem*{lem*}{Lemma}
\newtheorem*{cnj*}{Conjecture}
\DeclareMathOperator*{\colim}{colim}
\DeclareMathOperator{\cof}{cof}
\DeclareMathOperator{\coker}{coker}
\DeclareMathOperator{\THH}{THH}
\DeclareMathOperator{\Map}{Map}
\DeclareMathOperator{\End}{End}
\DeclareMathOperator{\Fun}{Fun}
\DeclareMathOperator{\map}{map}
\DeclareMathOperator{\perf}{perf}
\DeclareMathOperator{\Spec}{\mathrm{Spec}}
\DeclareMathOperator{\im}{im}
\DeclareMathOperator{\Mod}{Mod}
\def\Ind{\mathrm{Ind}}
\def\uloc{\mathcal{U}_{\mathrm{loc}}}
\def\A{\mathbb{A}}
\def\E{\mathbb{E}}
\def\F{\mathbb{F}}
\def\Q{\mathbb{Q}}
\def\R{\mathbb{R}}
\def\Ss{\mathbb{S}}
\def\Z{\mathbb{Z}}
\def\CC{\mathcal{C}}
\def\DD{\mathcal{D}}
\def\AA{\mathcal{A}}
\def\o{\mathbbm{1}}
\def\nil{\mathrm{nil}}
\def\Alg{\mathrm{Alg}}
\def\Cat{\mathrm{Cat}}
\def\Spaces{\mathcal{S}}
\def\QCoh{\mathrm{QCoh}}
\def\Rep{\mathrm{Rep}}
\def\op{\mathrm{op}}
\def\heart{\heartsuit}
\def\perf{\mathrm{perf}}
\def\nc{\mathrm{nc}}
\def\Id{\mathrm{Id}}
\def\BB{\mathcal{B}}
\newcommand{\pushout}{\arrow[ul, phantom, "\ulcorner", very near start]}
\newcommand{\pullback}{\arrow[dr, phantom, "\lrcorner", very near start]}
\newcommand\xqed[1]{%
  \leavevmode\unskip\penalty9999 \hbox{}\nobreak\hfill
  \quad\hbox{#1}}
\newcommand\tqed{\xqed{$\triangleleft$}}
  \newcommand{\NB}[1]{\todo[color=gray!40]{#1}}
  \newcommand{\TODO}[1]{\todo[color=red]{#1}}
  \newcommand{\NB}[1]{}
  \newcommand{\TODO}[1]{}
  \renewcommand{\todo}[1]{}
  \renewcommand{\todo}[1]{}
\title{On the \texorpdfstring{$K$}{K}-theory of regular coconnective rings}
\date{\today}
\author{Robert Burklund}
\address{Department of Mathematics, MIT, Cambridge, MA, USA}
\email{burklund@mit.edu}
\author{Ishan Levy}
\address{Department of Mathematics, MIT, Cambridge, MA, USA}
\email{ishanl@mit.edu}
\thanks{The second author is supported by the NSF Graduate Research Fellowship under Grant No. 1745302.}
\begin{document}

\begin{abstract}
We show that for a coconnective ring spectrum satisfying regularity and flatness assumptions, its algebraic $K$-theory agrees with that of its $\pi_0$. We prove this as a consequence of a more general devissage result for stable infinity categories. Applications of our result include giving general conditions under which $K$-theory preserves pushouts, generalizations of $\A^n$-invariance of $K$-theory, and an understanding of the $K$-theory of categories of unipotent local systems.

\end{abstract}

\maketitle

\setcounter{tocdepth}{1}
\tableofcontents
\vbadness 5000


\section{Introduction}
\label{sec:intro}
In this paper, we examine the relationship between coconnectivity, regularity, and algebraic $K$-theory. We identify the $K$-theory of a large collection of coconnective rings with that of their $\pi_0$.

\begin{thm} \label{thm:opener}
  Given a coconnective $\E_1$-algebra $R$ such that
  \begin{enumerate}
  \item $\pi_0R$ is left regular coherent and
  \item $\tau_{\leq-1}R$ has tor amplitude in $[-\infty,-1]$ as a right $\pi_0R$ module,
  \end{enumerate}
  the natural map in connective $K$-theory
  \[ K(\pi_0R) \to K(R) \]
  is an equivalence and both $\pi_0R$ and $R$ have vanishing $K_{-1}$.
\end{thm}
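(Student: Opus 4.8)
The plan is to reduce to a dévissage statement for stable $\infty$-categories. Let $\mathcal{C}$ be the category of perfect $R$-modules, or rather a suitable bounded-above variant $\mathrm{Perf}(R)^{\leq 0}$ built from $R$ itself; the hypotheses on $R$ should make this a well-behaved stable subcategory. First I would observe that the natural map $K(\pi_0 R) \to K(R)$ is induced by extension of scalars along $R \to \pi_0 R$, or by the inclusion of $\pi_0 R$-modules into $R$-modules — one should identify $\mathrm{Perf}(\pi_0 R)$ with a subcategory (a ``heart-like'' piece) of an appropriate category of coconnective $R$-modules, and then build a filtration of a general object by its (co)connective truncations. The condition that $\tau_{\leq -1} R$ has tor amplitude in $[-\infty, -1]$ over $\pi_0 R$ is exactly what is needed to control how a general $R$-module is assembled from $\pi_0 R$-modules: the associated graded pieces of the Postnikov-type filtration of $R \otimes_{\pi_0 R} M$ should again lie in the subcategory generated by $\pi_0 R$-modules, so no new $K$-theory classes are introduced.

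The key steps, in order: (1) set up the relevant stable categories — a category of perfect-ish coconnective $R$-modules $\mathcal{M}$, together with its subcategory $\mathcal{M}_0$ of those induced from $\mathrm{Perf}(\pi_0 R)$, and verify $\mathcal{M}_0 \hookrightarrow \mathcal{M}$ induces an equivalence on $K$-theory by showing $\mathcal{M}_0$ is dense, i.e. every object of $\mathcal{M}$ is built by finitely many cofiber sequences from objects of $\mathcal{M}_0$; (2) prove this density using the tor-amplitude hypothesis: filter $M$ by powers of the ``augmentation ideal'' $\tau_{\leq -1} R$, showing the filtration is finite on perfect objects (using coconnectivity to bound it above and regularity of $\pi_0 R$ to bound it below, via regular coherence ensuring finite projective dimension of finitely presented $\pi_0 R$-modules); (3) identify $K(\mathcal{M}_0)$ with $K(\pi_0 R)$ — this is where left regular coherence of $\pi_0 R$ enters, guaranteeing that $\mathrm{Perf}(\pi_0 R)$ agrees with the category of finitely presented $\pi_0 R$-modules and their shifts, so that connective $K$-theory of $\pi_0 R$ is computed by a category with a well-behaved $t$-structure; (4) identify $K(\mathcal{M})$ with $K(R)$; (5) deduce $K_{-1}(R) = K_{-1}(\pi_0 R) = 0$ from regular coherence — here regular coherence of $\pi_0 R$ gives that the negative $K$-groups of $\pi_0 R$ vanish (a Bass-type vanishing, since $\mathrm{Perf}(\pi_0 R)$ has a bounded $t$-structure with Noetherian-like heart in the coherent sense), and the equivalence $K(\pi_0 R) \simeq K(R)$ transports this to $R$.

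The main obstacle I anticipate is step (2): showing that the augmentation-ideal filtration on a perfect $R$-module terminates after finitely many steps, with associated graded pieces that are genuinely induced from perfect $\pi_0 R$-modules. Coconnectivity of $R$ gives an upper bound on homotopy, but a perfect $R$-module can have homotopy in arbitrarily negative degrees, so finiteness of the filtration is not automatic — it must come from combining the tor-amplitude bound on $\tau_{\leq -1} R$ with the regular coherence of $\pi_0 R$ (which bounds projective dimensions of the relevant $\pi_0 R$-modules). Making this quantitative — bounding the length of the filtration in terms of the tor amplitude and the regularity — is the technical heart of the argument, and is presumably why the authors isolate a general dévissage theorem for stable $\infty$-categories rather than arguing directly with rings. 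A secondary subtlety is ensuring all the categories in play are idempotent-complete (or passing to idempotent completions) so that the $K$-theory comparisons, including the statement about $K_{-1}$, are literally correct rather than correct up to the issues that negative $K$-theory is designed to detect.
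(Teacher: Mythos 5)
Your high-level strategy matches the paper's --- reduce to a dévissage theorem for stable $\infty$-categories, with the tor-amplitude hypothesis doing the work of controlling how objects are assembled from $\pi_0 R$-modules and regular coherence supplying a bounded $t$-structure --- but the mechanism you describe has genuine gaps, and the way you propose to run the dévissage is not what actually works.

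The central missing idea is the construction of a $t$-structure on $\Mod(R)^{\omega}$ and the two-sided use of Barwick's theorem of the heart. The paper applies its categorical dévissage theorem to the base-change functor $R \otimes_{\pi_0 R} - : \Mod(\pi_0 R)^{\omega} \to \Mod(R)^{\omega}$ (note the direction: the comparison is along the connective cover map $\pi_0 R \to R$, not ``along $R \to \pi_0 R$'', which does not exist). The two hypotheses are translated into (A) the image generates (automatic since $R$ generates) and (B) base change is fully faithful on the heart $\Mod(\pi_0 R)^{\heart}$, which is exactly the tor-amplitude condition. From (A) and (B) one transfers the bounded $t$-structure from $\Mod(\pi_0 R)^{\omega}$ to produce a new bounded $t$-structure on $\Mod(R)^{\omega}$; Barwick's theorem of the heart then identifies $K(\Mod(\pi_0 R)^{\omega})$ and $K(\Mod(R)^{\omega})$ with the $K$-theory of their respective hearts, and Quillen's dévissage for abelian categories is applied to the inclusion of hearts. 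Your ``density'' step is the place where this goes wrong: if $\mathcal{M}_0$ is the \emph{thick} subcategory generated by the induced modules, it is trivially all of $\mathcal{M}$ (that is just what generation means), and identifying $K(\mathcal{M}_0)$ with $K(\pi_0 R)$ is then the entire content of the theorem, not a step (3) sanity check. The non-trivial finiteness statement lives at the level of the heart of the constructed $t$-structure: one shows that every object of $\Mod(R)^{\omega,\heart}$ admits a finite filtration with associated graded in $\Mod(\pi_0 R)^{\heart}$, via a spectral-sequence argument showing that the class of such objects is thick. Your ``powers of the augmentation ideal $\tau_{\leq -1}R$'' filtration is a reasonable-sounding heuristic but is not the filtration that is used, and you give no argument that it terminates or that its graded pieces are induced.

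The $K_{-1}$ vanishing is also misattributed. Regular coherence does \emph{not} imply vanishing of all negative $K$-groups (Neeman has a counterexample cited in the paper), so a ``Bass-type vanishing'' from coherence/regularity alone is false. What is true --- and what the paper uses --- is the theorem of Antieau–Gepner–Heller that $K_{-1}$ vanishes for any small stable idempotent-complete category equipped with a bounded $t$-structure; this applies to $\Mod(\pi_0 R)^{\omega}$ by regular coherence and to $\Mod(R)^{\omega}$ by the transferred $t$-structure. Without Noetherian hypotheses you do not get vanishing below degree $-1$, and the theorem does not claim it.
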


Although not immediately clear, \Cref{thm:opener} is a devissage theorem.
The core step in the proof is an application of Quillen's devissage theorem \cite[Theorem 4]{quillenhigherktheory} and condition (1) is exactly what is needed for the category of perfect $\pi_0R$-modules to have a $t$-structure with heart finitely presented $\pi_0R$-modules. The essential novelty in \Cref{thm:opener} comes from condition (2) as a simple condition, easily checked in practice\footnote{Condition $(2)$ is satisfied if $\pi_{-i}R$ has tor dimension $<-i$ as a right $\pi_0R$-module.}, which allows us to conclude. As a demonstration we work through the prototypical example of devissage.


\begin{exm}
  From the localization sequence
  \[ \Mod(\Z)^{p\mathrm{-nil}} \hookrightarrow \Mod(\Z) \twoheadrightarrow \Mod(\Z[1/p]) \]
  we obtain a cofiber sequence of noncommutative motives 
  \[ \uloc\left( \Mod(\Z)^{p\mathrm{-nil}} \right) \to \uloc(\Mod(\Z)) \to \uloc(\Mod(\Z[1/p])). \]
  Identifying $\F_p$ as a generator of $\Mod(\Z)^{p\mathrm{-nil}}$ we have an identification
  \[ \Mod(\Z)^{p\mathrm{-nil}} \simeq \Mod( \End_{\Z}(\F_p) ). \]
  Devissage can then be phrased as the assertion that $K(\End_{\Z}(\F_p)) \simeq K(\F_p)$, from which we obtain a cofiber sequence on algebraic $K$-theory
  \[ K( \F_p ) \to K(\Z) \to K(\Z[1/p]). \]
  In order to prove this using \Cref{thm:opener}
  we compute the homotopy groups of $\End_{\Z}(\F_p)$, which are
  \[ \pi_s\End_{\Z}(\F_p) \cong \begin{cases} \F_p & s=0,-1 \\  0 & \text{otherwise} \end{cases} \]
  and observe that conditions (1) and (2) are satisfied.
  \tqed
\end{exm}

In this example we entirely avoided descending into the category of modules to check the existence of filtrations, instead operating at the level of categories, motives and rings throughout.
This change represents a considerably gain in practical usability.
Before moving on, let us point out that this example also highlights another key feature of devissage which is sometimes overlooked.
While localization sequences occur at the level of noncommutative motives, devissage is specific to $K$-theory\footnote{For example $\THH(\End_{\Z}(\F_p))$ is the fiber of the map $\THH(\Z) \to \THH(\Z[1/p])$, which doesn't agree with $\THH(\F_p)$.}.


We prove \Cref{thm:opener} as a corollary of our main result \Cref{thm:cat-main}, which is a more general devissage result
taking place at the level of stable categories:

\begin{thm}\label{thm:cat-main}
  Let $ F : \CC \to \DD $ be an exact functor between small, stable, idempotent complete categories and let $(\CC_{\geq 0}, \CC_{\leq 0})$ be a bounded $t$-structure on $\CC$.
  If we assume that 
  \begin{enumerate}
  \item[(A)] the image of $F$ generates $\DD$ and
  \item[(B)] $F$ is fully faithful when restricted to $\CC^\heart$
  \end{enumerate}
  then there is a corresponding bounded $t$-structure on $\DD$ for which $F$ is $t$-exact.
  Moreover, the induced maps on connective $K$-theory
  \begin{center}
    \begin{tikzcd}
      K(\CC^\heart) \ar[r] \ar[d] & K(\DD^\heart) \ar[d] \\
      K(\CC) \ar[r] & K(\DD)
    \end{tikzcd}
  \end{center}
  are all equivalences and both $\CC$ and $\DD$ have vanishing $K_{-1}$.
\end{thm}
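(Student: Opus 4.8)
The plan is to establish the existence of the $t$-structure on $\DD$ first, then deduce the $K$-theory statements from Quillen's devissage theorem. For the $t$-structure: I would define $\DD_{\geq 0}$ to be the subcategory generated under extensions and positive shifts by $F(\CC_{\geq 0})$, and $\DD_{\leq 0}$ to be the full subcategory of objects $d$ such that $\Map_{\DD}(F(c), d) = 0$ for all $c \in \CC_{\geq 1}$. One then must check that this pair defines a $t$-structure, that it is bounded, and that $F$ is $t$-exact. The key input here is hypothesis (B): full faithfulness on the heart, combined with (A), should let me show that $F(\CC^{\heart})$ is closed under subobjects and quotients inside $\DD^{\heart}$, so that $\DD^{\heart}$ is built from $F(\CC^{\heart})$ by extensions. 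I expect the bulk of the work to be in verifying the decomposition axiom for the $t$-structure — i.e. that every object of $\DD$ sits in a (necessarily unique) cofiber sequence $\tau_{\geq 1} d \to d \to \tau_{\leq 0} d$ — and for this the generation hypothesis (A), together with boundedness of the $t$-structure on $\CC$, should allow an induction on the ``Postnikov filtration'' length of objects in the image of $F$, then extend to all of $\DD$ since the image of $F$ generates.

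Once the $t$-structure is in hand, the $K$-theory square is attacked as follows. The two vertical maps $K(\CC^{\heart}) \to K(\CC)$ and $K(\DD^{\heart}) \to K(\DD)$ are both equivalences on connective $K$-theory by Quillen's devissage theorem \cite[Theorem 4]{quillenhigherktheory} — more precisely, by its modern formulation for stable categories with a bounded $t$-structure whose heart is (abelian and) noetherian, or the appropriate coherence variant; here one uses that a bounded $t$-structure expresses every object as a finite iterated extension of shifts of objects in the heart, which is exactly the hypothesis of devissage. The bottom equivalence of the vertical maps is what forces us to work with connective (rather than nonconnective) $K$-theory, and also what gives the vanishing of $K_{-1}$: since $K(\CC)$ and $K(\DD)$ are connective they have no negative homotopy, and the identification with $K$ of the hearts shows $K_{-1}(\CC) = K_{-1}(\DD) = 0$. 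It remains to show the top horizontal map $K(\CC^{\heart}) \to K(\DD^{\heart})$ is an equivalence; then the bottom map follows by two-out-of-three.

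For the top map, the point is that $\CC^{\heart} \to \DD^{\heart}$ is an exact, fully faithful (by (B)) functor of abelian categories, and I claim its essential image generates $\DD^{\heart}$ under extensions (and is closed under subobjects and quotients). Granting this, $\DD^{\heart}$ is the ``extension closure'' of an abelian subcategory equivalent to $\CC^{\heart}$, and a second application of devissage — now for abelian categories, which is Quillen's original statement — gives $K(\CC^{\heart}) \xrightarrow{\ \sim\ } K(\DD^{\heart})$. To prove the claim about the essential image: an object $d \in \DD^{\heart}$ lies in $\DD$, which is generated by $F(\CC)$; using the bounded $t$-structure on $\CC$ and $t$-exactness of $F$, one reduces to showing that the smallest extension-closed, subobject- and quotient-closed subcategory of $\DD^{\heart}$ containing $F(\CC^{\heart})$ is all of $\DD^{\heart}$, which follows from generation once one knows $F(\CC^{\heart})$ is closed under the relevant operations inside $\DD^{\heart}$.

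The main obstacle I anticipate is precisely the construction and verification of the $t$-structure on $\DD$ — in particular showing $F(\CC^{\heart})$ is closed under subobjects and quotients in $\DD^{\heart}$, and that the candidate aisles $(\DD_{\geq 0}, \DD_{\leq 0})$ actually satisfy the truncation axiom. The subtlety is that a priori $\DD^{\heart}$ could contain ``new'' simple-ish objects not visible from $\CC^{\heart}$; hypotheses (A) and (B) together must be leveraged to rule this out. Everything downstream — the two devissage applications and the two-out-of-three argument for the square — is then comparatively formal.
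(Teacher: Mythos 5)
Your overall architecture matches the paper's: produce a bounded $t$-structure on $\DD$ for which $F$ is $t$-exact, identify $K(\CC) \simeq K(\CC^\heartsuit)$ and $K(\DD) \simeq K(\DD^\heartsuit)$, then reduce the top horizontal map to Quillen's devissage for abelian categories. However, there are several points where the plan is either misstated or significantly under-developed.

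\emph{Misidentification of the theorem used for the vertical maps.} You attribute $K(\CC^\heartsuit) \simeq K(\CC)$ to ``Quillen's devissage theorem \ldots\ its modern formulation for stable categories with a bounded $t$-structure whose heart is (abelian and) noetherian.'' This is a conflation. The equivalence $K(\CC^\heartsuit) \to K(\CC)$ is Barwick's theorem of the heart, which holds for any small, stable, idempotent complete category with a bounded $t$-structure --- no Noetherian or coherence hypothesis is needed, and inserting one would needlessly weaken the theorem. Quillen's devissage is the separate result about an exact fully faithful inclusion of abelian categories, which you correctly use for the top horizontal map.

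\emph{The $K_{-1}$ argument is circular.} You say $K_{-1}(\CC) = K_{-1}(\DD) = 0$ because ``$K(\CC)$ and $K(\DD)$ are connective they have no negative homotopy.'' Connective $K$-theory is a connective spectrum by construction, so this statement is vacuous. The assertion in the theorem is that $\pi_{-1}K^{\mathrm{nc}}$ vanishes, i.e.\ a statement about nonconnective $K$-theory. Neither Barwick's theorem of the heart nor Quillen's devissage is known to hold for nonconnective $K$-theory (the paper poses this as an open question), so you cannot deduce the $K_{-1}$ vanishing from the equivalences you have established. The correct input is the Antieau--Gepner--Heller vanishing theorem \cite[Theorem 1.1]{antieau2018ktheoretic}: a small, stable, idempotent complete $\infty$-category with a bounded $t$-structure has vanishing $K_{-1}$. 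This is where the bounded $t$-structures on both $\CC$ and $\DD$ (and not merely on their hearts) are used.

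\emph{The $t$-structure construction is where the real work lives and is under-specified in your plan.} You propose defining the aisle $\DD_{\geq 0}$ directly as an extension closure inside $\DD$ and then verifying the truncation axiom ``by induction on Postnikov filtration length.'' This is plausible in outline but is exactly the part that needs care, and the sketch as given does not explain how to produce the truncation $\tau_{\geq 1}d \to d \to \tau_{\leq 0}d$ for a general $d$. The paper sidesteps this by first producing the $t$-structure on $\Ind(\DD)$ (where accessible $t$-structures generated by a set of objects exist automatically, by \cite[Proposition 1.4.4.11]{HA}), showing $\Ind(F)$ is $t$-exact, and only then proving that this $t$-structure restricts to compact objects. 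That last step --- showing truncations of compact objects remain compact --- is the technical heart of the argument, and it is proved by showing that the full subcategory of $d \in \DD$ satisfying ``$d$ is bounded and each $\pi^{\heartsuit}_i(d)$ admits a finite filtration with associated graded in $\CC^\heartsuit$'' is thick and contains the image of $F$. The key sub-step is showing this condition is stable under kernels and cokernels in the heart, which the paper handles with a finite spectral sequence argument exploiting that $F|_{\CC^\heartsuit}$ is exact and fully faithful. Your proposal gestures at the needed claim (``its essential image generates $\DD^\heartsuit$ under extensions, and is closed under subobjects and quotients'') but offers no argument; this is precisely where hypotheses (A) and (B) interact nontrivially and cannot be treated as formal. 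Relatedly, you will also need the closure-under-subobjects hypothesis in Quillen's devissage, and the paper proves this by reducing to closure of $\CC^\heartsuit$ under quotients by $\CC^\heartsuit$-subobjects, again using the finite filtration. Both of these steps deserve to be spelled out before the plan can be considered a proof.
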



We prove \Cref{thm:cat-main} in \Cref{sec:main} where the key step is constructing a bounded $t$-structure on $\DD$ for which $F$ is $t$-exact.
This is the most technical point in the proof and it uses all of the conditions of the theorem in an essential way.
In fact, as a byproduct of this argument we obtain relatively fine-grained control over the abelian category $\DD^{\heart}$. Specifically, $\CC^\heart$ sits inside $\DD^{\heart}$ as a full subcategory and every object of $\DD^{\heart}$ has a finite length filtration with associated graded in $\CC^{\heart}$.
At this point we apply Barwick's theorem of the heart \cite{Barwick_2015} to identify the $K$-theory of $\CC$ with that of $\CC^\heart$ and the $K$-theory of $\DD$ with that of $\DD^\heart$. The proof ends by applying Quillen's devissage \cite[Theorem 4]{quillenhigherktheory} to the inclusion of abelian categories $\CC^\heart \to \DD^\heart$.

Examining the relation between $\CC^\heart$ and $\DD^\heart$  we see that conditions (A) and (B) together can be thought of as asking that $\DD$ behave like a category of unipotent local systems with coefficients in $\CC$.
Indeed, (B) is analogous to the fact that maps between trivial representations can be computed on underlying and (A) is analogous to the fact that unipotent representations are generated from trivial representations under extensions.
For this reason we say that a map is \emph{unipotent} if it satisfies these conditions.
With this reformulation we can now introduce the key slogan of this paper:
\vspace{0.1cm}
\begin{quote}
  \emph{Devissage is the invariance of $K$-theory under unipotent maps.}
\end{quote}
\vspace{0.1cm} 

In \Cref{sec:complements} we deduce \Cref{thm:opener} from \Cref{thm:cat-main} and discuss several points which are complementary to \Cref{thm:cat-main}. A subtlety worth noting here is that up to this point we have been working entirely with \textit{connective} $K$-theory as this is the setting where Barwick's theorem of the heart and Quillen's devissage are applicable. Since we are not aware of any examples where these results fail in negative $K$-theory we are led to ask:

\begin{qst} \label{qst:intro-neg-heart}
  Do the theorem of the heart and devissage hold for negative $K$-theory?
\end{qst}

In Sections \ref{sec:applications} and \ref{sec:examples} we turn to applications of our main theorem.
Combining our work with the work of Land--Tamme on the $K$-theory of pullbacks we provide general conditions under which $K$-theory preserves pushouts. 

\begin{thm}\label{thm:discrete-pushouts}
  Suppose $C \xleftarrow{g} A \xrightarrow{f} B$ is a span of discrete rings
  where $A$ is left regular coherent and both $f$ and $g$ are right faithfully flat.
  Then connective $K$-theory preserves the pushout of this span.
\end{thm}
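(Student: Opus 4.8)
The plan is to combine the Land--Tamme description of the $K$-theory of a pushout with \Cref{thm:cat-main}. Write $D = B \sqcup_A C$ for the pushout of the span, formed in $\E_1$-rings. First I would observe that under our hypotheses $D$ is discrete --- the ordinary free product $B \ast_A C$: a faithfully flat ring map is injective, $B$ and $C$ are flat as right $A$-modules, and the homotopy colimit computing $B \sqcup_A C$ has a standard filtration whose associated graded is built from iterated relative tensor products $B \otimes_A C \otimes_A B \otimes_A \cdots$, which remain discrete by flatness. In particular $\pi_0 D = \pi_0 B \ast_{\pi_0 A} \pi_0 C$, and a ring-theoretic lemma --- the free-product analogue of condition (1) of \Cref{thm:opener}, using left regular coherence of $A$ together with right faithful flatness of $f$ and $g$ --- shows $\pi_0 D$ is again left regular coherent. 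Hence $\mathrm{Perf}(D) = \mathrm{Perf}(\pi_0 D)$ carries a bounded $t$-structure whose heart is the category of finitely presented $\pi_0 D$-modules.

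Next, by Land--Tamme, the span $B \xleftarrow{f} A \xrightarrow{g} C$ has an associated $\E_1$-ring $B \odot_A C$ receiving compatible $\E_1$-maps from $B$ and $C$ under $A$ --- hence a canonical map $\iota \colon D \to B \odot_A C$ out of the pushout --- such that every localizing invariant, and in particular $K$-theory, carries the square
\begin{center}
\begin{tikzcd}
A \ar[r] \ar[d] & B \ar[d] \\
C \ar[r] & B \odot_A C
\end{tikzcd}
\end{center}
to a cocartesian square of spectra. (For connective $K$-theory one invokes this via its non-connective refinement and descends afterwards using the $K_{-1}$-vanishing produced below.) The canonical comparison map $K(B) \sqcup_{K(A)} K(C) \to K(D)$ is then a section of $K(\iota)$, so $K(\iota)$ is split surjective and $K$ preserves the pushout of the span if and only if $K(\iota)$ is an equivalence. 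Informally, the Nil phenomena that obstruct Waldhausen's free-product formula when $A$ is not regular have all been isolated into the single map $\iota$.

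It remains to prove that $K(\iota)$ is an equivalence, and for this I would exhibit $\iota$ as a unipotent map and apply \Cref{thm:cat-main} to $F = \mathrm{Perf}(\iota) \colon \mathrm{Perf}(D) \to \mathrm{Perf}(B \odot_A C)$, using the $t$-structure on the source from the first paragraph. Condition (A) is automatic, since $B \odot_A C$ is the image of the unit and so generates. Condition (B) --- that $F$ is fully faithful on the heart of finitely presented $\pi_0 D$-modules --- is the crux. Since $\iota$ is an isomorphism on $\pi_0$ (a feature of the Land--Tamme construction), its fiber is a connective $D$-bimodule, and by adjunction (B) unwinds to a vanishing statement for $\Map_D(M, N \otimes_D \fib(\iota))$ as $M, N$ range over finitely presented discrete $\pi_0 D$-modules. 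This is precisely where right faithful flatness of $f$ and $g$ is essential: it is what lets one compute $\fib(\iota)$ --- equivalently the higher homotopy of the Land--Tamme ring $B \odot_A C$ --- in terms of $\mathrm{Tor}$-groups over $A$, and so verify the vanishing. Granting (A) and (B), \Cref{thm:cat-main} supplies both the equivalence $K(\iota)$ and the vanishing of $K_{-1}$, completing the proof.

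The main obstacle is this last verification. Both the propagation of regular coherence to $\pi_0 D$ and, more seriously, the control on $\fib(\iota)$ needed for condition (B) require a concrete grip on the free product $B \ast_A C$ and on the Land--Tamme ring $B \odot_A C$ as $A$-bimodules, and it is here --- not in the formal reduction via Land--Tamme --- that \emph{faithful} flatness (rather than mere flatness) does the essential work.
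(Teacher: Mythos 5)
Your proposal misremembers the structure of the Land--Tamme result, and as a consequence applies devissage to the wrong map. By \Cref{thm:LTformula}, the $\odot$-product with its canonical bonding $B\otimes_A C$ \emph{is} the pushout: $B\odot_A^{B\otimes_AC}C\simeq B\sqcup_A C = D$, so your map $\iota\colon D\to B\odot_A C$ is already an equivalence and there is nothing to devissage there. The corner where something nontrivial happens is the \emph{top-left} one: the motivic pushout square produced by \Cref{thm:catpushoutLT} is
\[
\begin{tikzcd}
\uloc(\im\AA) \ar[r]\ar[d] & \uloc(\CC)\ar[d]\\
\uloc(\BB)\ar[r] & \uloc(\BB\sqcup_{\AA}\CC),
\end{tikzcd}
\]
where $\im\AA$ is the image of $\AA$ inside the oplax limit $\BB\vec{\times}_{b^*c_*}\CC$, and $\im\AA$ is generally \emph{not} $\AA$. (The situation where $\im A\simeq A$ is the original \Cref{exm:LT-old}, which begins from a \emph{pullback} square of rings, not a span, and in that case $B\odot_A^D C$ is generally \emph{not} $B\sqcup_A C$. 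One does not get both identifications at once.) So your claim that $K$-theory carries the square $(A,B;C,B\odot_A C)$ to a cocartesian square is false, and the later analysis of $\fib(\iota)$ — which would be zero anyway — does not lead anywhere.

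The map that actually needs \Cref{thm:cat-main} is $\AA\to\im\AA$, and this is what the paper does: \Cref{thm:pushouts} requires the functor $\AA^\heartsuit\to\BB\vec{\times}_{b^*c_*}\CC$ to be fully faithful (condition (D)), which makes $\AA\to\im\AA$ unipotent and hence a $K$-equivalence; \Cref{lem:pushout-alt} reduces (D) to faithfulness of each of $\AA^\heartsuit\to\BB$ and $\AA^\heartsuit\to\CC$ via an explicit computation of $F_*F^*$ as a pullback, and for discrete rings faithfulness on the heart is exactly right faithful flatness. Note that the paper never needs to know that $D$ is a discrete ring or that $\pi_0 D$ is regular coherent: the argument happens entirely at the level of nc motives and the single devissage step $\AA\to\im\AA$. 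Your first paragraph (discreteness of $D$, transfer of regular coherence to $\pi_0 D$) is both harder than the theorem it's being used for and not part of the actual argument, and your condition (B) analysis is aimed at a map whose fiber vanishes. I would rework the proof around the identification of $\im\AA$ and the fully faithfulness of $\AA^\heartsuit\hookrightarrow\BB\vec{\times}_{b^*c_*}\CC$.
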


This generalizes Waldhausen's results about the $K$-theory of generalized free products \cite{Waldhausen-free}. Using similar techniques, we then obtain an $\A^n$-invariance result for $K$-theory.
 
\begin{thm}[$\A^n$-invariance of algebraic $K$-theory]\label{thm:introan-inv}
  Let $\CC$ be a small, stable, idempotent complete category equipped with a bounded $t$-structure.
  Then $K_i(\CC) \cong K_i(\CC[x_1,\dots,x_n])$ for $i \geq n-1$.
\end{thm}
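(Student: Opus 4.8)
The plan is to translate the statement, via the Bass--Heller--Swan fundamental theorem, into a vanishing result for nil $K$-theory, and then establish that by induction on the number of variables, with each variable costing one degree of connectivity. Throughout I work with nonconnective $K$-theory $\mathbb{K}$: since $n \geq 1$ the range $i \geq n-1$ is nonnegative, where $\mathbb{K}$ agrees with connective $K$-theory, and passing to $\mathbb{K}$ makes Verdier localization sequences and the fundamental theorem available (connective $K$-theory is not a localizing invariant). For a small stable idempotent complete $\mathcal{E}$ write $N\mathbb{K}(\mathcal{E}) = \mathrm{cofib}(\mathbb{K}(\mathcal{E}) \to \mathbb{K}(\mathcal{E}[x]))$. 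Iterating the fundamental theorem identifies $\mathrm{cofib}(\mathbb{K}(\CC) \to \mathbb{K}(\CC[x_1,\dots,x_n]))$ with $\bigoplus_{k=1}^{n} N\mathbb{K}(\CC[x_1,\dots,x_{k-1}])$, so it suffices to show that for every $j \geq 0$ the spectrum $N\mathbb{K}(\CC[x_1,\dots,x_j])$ has vanishing homotopy in degrees $\geq j$.

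For this, fix $\mathcal{E} = \CC[x_1,\dots,x_j]$ and consider the Verdier sequence $\mathcal{E}[x]^{x\text{-tors}} \hookrightarrow \mathcal{E}[x] \twoheadrightarrow \mathcal{E}[x,x^{-1}]$ coming from the open immersion $\G_m \hookrightarrow \A^1$, where $\mathcal{E}[x]^{x\text{-tors}}$ is the full subcategory of $x$-power-torsion objects. Applying $\mathbb{K}$ and substituting the fundamental theorem's splitting of $\mathbb{K}(\mathcal{E}[x,x^{-1}])$, a short computation identifies $\mathbb{K}(\mathcal{E}[x]^{x\text{-tors}})$ with $\mathbb{K}(\mathcal{E}) \oplus \Sigma^{-1} N\mathbb{K}(\mathcal{E})$, compatibly with a natural comparison functor out of $\mathcal{E}$; the desired vanishing is thus equivalent to this comparison being an isomorphism on $\mathbb{K}_i$ in the appropriate range. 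Now $\mathcal{E}[x]^{x\text{-tors}}$ is, relative to $\mathcal{E}$, the Koszul dual of a point of $\A^1$: it is generated by the image of $\mathcal{E}$, and the endomorphism algebras of these generators are coconnective, obtained from those of $\mathcal{E}$ by adjoining a square-zero self-map in degree $-1$. This is exactly the shape governed by \Cref{thm:opener}, whose categorical engine is \Cref{thm:cat-main}; its hypotheses are met in this relative setting, and when $\mathcal{E}$ carries a \emph{bounded} $t$-structure the $t$-exact comparison functor furnished by the proof of \Cref{thm:opener} satisfies conditions (A) and (B) of \Cref{thm:cat-main}, so we obtain an honest equivalence $\mathbb{K}(\mathcal{E}) \xrightarrow{\sim} \mathbb{K}(\mathcal{E}[x]^{x\text{-tors}})$ compatible with the splitting. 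This forces $N\mathbb{K}(\mathcal{E}) = 0$ and settles the case $j=0$ (in particular, full $\A^1$-invariance).

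For $j \geq 1$ one repeats the argument with $\mathcal{E} = \CC[x_1,\dots,x_j]$, and this is where the degree loss enters. The category $\CC[x_1,\dots,x_j]$ no longer admits a bounded $t$-structure: the $t$-structure one obtains --- pulled back from $\CC$ along the forgetful functor, or produced as a byproduct of the devissage at the previous stage --- is bounded above but controlled below only through degree $-j$, since each adjoined variable contributes a degree $-1$ generator to the relevant endomorphism algebras and these accumulate. One must therefore upgrade the ingredients of the devissage package --- Barwick's theorem of the heart, Quillen's devissage, and the construction of the $t$-structure on the target in \Cref{thm:cat-main} --- to quantitative versions valid for such ``$j$-almost bounded'' $t$-structures, whose output is weakened from an equivalence on $\mathbb{K}$ to an isomorphism on $\mathbb{K}_i$ in a range that loses one degree per variable. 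Granting this refinement, the comparison functor is an isomorphism on $\mathbb{K}_i$ for $i \geq j-1$ at the $j$-th stage, hence $N\mathbb{K}(\CC[x_1,\dots,x_j])$ is concentrated in degrees $< j$ and $\CC[x_1,\dots,x_{j+1}]$ inherits a $(j+1)$-almost bounded $t$-structure, closing the induction; summing the contributions as above yields $K_i(\CC) \cong K_i(\CC[x_1,\dots,x_n])$ for $i \geq n-1$. The main obstacle is precisely this quantitative refinement of the devissage machinery, together with careful bookkeeping of connectivity through the fundamental theorem; the localization-sequence manipulations and the identification of the coconnective endomorphism algebras should be routine.
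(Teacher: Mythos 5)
Your overall strategy---reduce via the fundamental theorem to nil $K$-theory, then recognize that the relevant torsion category has coconnective endomorphism algebras so that the devissage machinery of \Cref{thm:cat-main} can be brought to bear---is the right one, and your treatment of the $j=0$ case (full $\A^1$-invariance via $\mathcal{E}[x]^{x\text{-tors}} \simeq \mathcal{E}[\epsilon_{-1}]$) matches the paper's \Cref{thm:a1invariance}. But the inductive step is a genuine gap. You correctly observe that $\CC[x_1,\dots,x_j]$ need not admit a bounded $t$-structure when $\CC$ is not Noetherian (this is precisely the difficulty the paper flags: $R[x]$ may fail to be coherent, so one cannot induct on $n$ naively), but your proposed fix---a ``quantitative'' devissage package for ``$j$-almost bounded'' $t$-structures that loses one degree per variable---is entirely conjectural. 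Nothing like this is proven in the paper, it is not clear it can be proven, and you acknowledge it is the main obstacle. A proof cannot rest on ``granting this refinement.''

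The paper avoids the inductive structure altogether via a shift trick. Expanding $\uloc(\Ss[x_1,\dots,x_n]) \simeq (\o \oplus \nil)^{\otimes n}$, the obstruction in degree $i$ is $K_i(\nil^{\otimes k}\otimes\CC)$ for $1 \leq k \leq n$. Now \Cref{exm:a1-issue} gives $\uloc(\Ss[\epsilon_{-1}]) \simeq \o \oplus \Sigma^{-1}\nil$, and iterating shows that $\Sigma^{-k}\nil^{\otimes k}\otimes\CC$ is a \emph{summand} of $\uloc(\CC[\epsilon_1,\dots,\epsilon_k])$ with all exterior generators in degree $-1$. Since adjoining degree $-1$ exterior generators is a coconnective deformation, $\CC \to \CC[\epsilon_1,\dots,\epsilon_k]$ satisfies conditions (A) and (B) of \Cref{thm:cat-main} \emph{verbatim} whenever $\CC$ has a bounded $t$-structure---no weakened hypotheses, no ``almost bounded'' $t$-structures, no induction on $j$. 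One gets an honest $K$-equivalence plus $K_{-1}$ vanishing, hence $K^{\nc}(\Sigma^{-k}\nil^{\otimes k}\otimes\CC)$ vanishes in degrees $\geq -1$, i.e.\ $K^{\nc}(\nil^{\otimes k}\otimes\CC)$ vanishes in degrees $\geq k-1$. The degree loss you were trying to encode in a hypothetical weakening of devissage is instead visible as a desuspension at the level of nc motives, converted by an unmodified devissage theorem. This is the single idea missing from your argument: trade positive-degree polynomial generators (where boundedness can fail) for negative-degree exterior generators (where it cannot), at the price of a motivic shift.
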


In the $n=1$ case, for a regular, Noetherian ring this recovers Quillen's fundamental theorem of algebraic $K$-theory \cite[Theorem 8]{quillenhigherktheory}. An alternative proof extending the result to regular coherent rings was given by Waldhausen, again in the $n=1$ case \cite{Waldhausen-free}.
The case of non-Noetherian regular coherent rings and $n>1$ is more difficult, because $R[x]$ may not even be coherent, so one cannot induct on $n$ in the obvious way. 
Nevertheless, the $n>1$ case was already known for example when $R$ is a discrete ring by the Ferrell-Jones conjecture for the groups $\Z^n$. See for example \cite[Corollary 2]{davis2008remarks}, which along with the $n=1$ case of \Cref{thm:introan-inv} implies the general case.
The degree bounds in the above theorem ultimately come from our use of connective $K$-theory and a positive answer to \Cref{qst:intro-neg-heart} would allow us to remove these restrictions.

In \Cref{subsec:locsys} we examine the category of unipotent local systems on a connected space $X$ with coefficients in a category $\CC$ with a bounded $t$-structure.
As one might expect, we find that the $K$-theory of unipotent local systems agrees with the $K$-theory of $\CC$, generalizing \cite[Theorem 4.8]{antieau2018ktheoretic}.
In the final pair of subsections we work through a collection of examples which demonstrate that the conditions in \Cref{thm:opener} cannot be weakened.


\subsection*{Notations and Conventions}\ 

In order to preserve the brevity of this paper we assume the reader is generally familiar with higher algebra and algebraic $K$-theory.
We also make use of the following notations and conventions throughout.

\begin{itemize}
\item The term category will refer to an $\infty$-category as developed by Joyal and Lurie.
\item $\Map(a,b)$ will denote the space of maps from $a$ to $b$ (in some ambient category).
\item In a stable category $\map(a,b)$ will denotes the mapping spectrum between $a$ and $b$.
\item For an $\E_1$-algebra $R$, $\Mod(R)$ will refer to its category of left modules.
\item We use $\CC, \DD$ to denote small, idempotent complete stable categories, and use $\Cat^{\perf}$ to denote the category of such categories and exact functors.
\item Given an exact functor $F:\CC \to \DD$ in $\Cat^{\perf}$, $F^*:\Ind(\CC) \to \Ind(\DD)$ denotes $\Ind(F)$, and $F_*: \Ind(\DD) \to \Ind(\CC)$ denotes the right adjoint of $F^*$.
\item We use $\uloc$ for the noncommutative motive (or just nc motive for short) functor of Blumberg--Gepner--Tabuada \cite{BGT}.
\item We use $K(-)$ for connective $K$-theory and $K^{\mathrm{nc}}(-)$ for non-connective $K$-theory.
\item We use $x_n$ for a polynomial generator in degree $n$ and $\epsilon_n$ for an exterior generator in degree $n$. As an example, $\Ss[x_n]$ is the free $\E_1$-algebra on a class in degree $n$.
\item We use $\CC[x_n]$ as notation for $\CC \otimes \Mod(\Ss[x_n])$ and similarly for exterior generators.
\end{itemize}

\subsection*{Acknowledgments}\

We would like to thank Andrew Blumberg, Jeremy Hahn, Mike Hopkins, Haynes Miller, Piotr Pstragowski, and Lucy Yang for helpful conversations related to this work.
We would also like to thank Andrew Blumberg for helpful comments on a draft of this paper.

Our deepest thanks go to Markus Land and Georg Tamme for many discussions about this work and its applications. It was they who first asked us when the connective cover map of a coconnective $\E_1$-algebra induces a $K$-equivalence, which was the origin of this work.


\section{The Main Theorem}
\label{sec:main}

In this section we prove our main theorem.

\begin{thm}
  Let $ F : \CC \to \DD $ be an exact functor between small, stable, idempotent complete categories and let $(\CC_{\geq 0}, \CC_{\leq 0})$ be a bounded $t$-structure on $\CC$.
  If we assume that 
  \begin{enumerate}
  \item[(A)] the image of $F$ generates $\DD$ and
  \item[(B)] $F$ is fully faithful when restricted to $\CC^\heart$
  \end{enumerate}
  then there is a corresponding bounded $t$-structure on $\DD$ for which $F$ is $t$-exact.
  Moreover, the induced maps on connective $K$-theory
  \begin{center}
    \begin{tikzcd}
      K(\CC^\heart) \ar[r] \ar[d] & K(\DD^\heart) \ar[d] \\
      K(\CC) \ar[r] & K(\DD)
    \end{tikzcd}
  \end{center}
  are all equivalences and both $\CC$ and $\DD$ have vanishing $K_{-1}$.
\end{thm}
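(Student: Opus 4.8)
The plan is to build the promised $t$-structure on $\DD$ by transporting the one on $\CC$ along $F$, and then to reduce the $K$-theory claims to the classical theorem of the heart and Quillen devissage applied to abelian categories. First I would define the candidate connective part $\DD_{\geq 0}$ to be the smallest full subcategory of $\DD$ containing $F(\CC_{\geq 0})$ and closed under extensions and under passing to cofibers of maps (equivalently, closed under the relevant truncations), and dually $\DD_{\leq 0}$ as the objects $d$ with $\Map(F(c), d) = 0$ for all $c \in \CC_{\geq 1}$. The work is to check that this is an accessible/bounded $t$-structure: that every object lies in some $\DD_{[-n,n]}$ (this uses condition (A), so that $\DD$ is generated by $F(\CC)$ and hence by finitely many shifts of objects of $\CC^\heart$, together with the boundedness of the $t$-structure on $\CC$), and that the two subcategories are orthogonal with the expected truncation triangles. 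I would prove orthogonality $\Map_\DD(\DD_{\geq 1}, \DD_{\leq 0}) = 0$ by dévissage on the generators: since any object of $\DD_{\geq 1}$ is built by finitely many extensions from objects $F(c)$ with $c \in \CC_{\geq 1}^{\heart}$-shifts, it suffices to treat $d' = F(c)$, and there condition (B) — full faithfulness of $F$ on $\CC^\heart$, extended to a statement about $\map_\DD(F(c), F(c'))$ being suitably connective for $c, c' \in \CC^\heart$ — lets one compare with $\map_\CC(c, c')$. The upshot I want is that $F$ is $t$-exact, that $\CC^\heart \hookrightarrow \DD^\heart$ is fully faithful, and that every object of $\DD^\heart$ admits a finite filtration with associated graded pieces in (the image of) $\CC^\heart$; this last point is essentially the construction of $\DD_{\geq 0}$ read in the heart.

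With the $t$-structures in hand, the $K$-theory statement is assembled from three inputs. The vertical maps $K(\CC^\heart) \to K(\CC)$ and $K(\DD^\heart) \to K(\DD)$ are equivalences by Barwick's theorem of the heart \cite{Barwick_2015}, which applies precisely because both $t$-structures are bounded; the same theorem gives $K_{-1}(\CC) = K_{-1}(\DD) = 0$ since the $K$-theory of a category with bounded $t$-structure agrees with that of its heart, an abelian category, whose connective $K$-theory has no negative homotopy. The bottom horizontal map is then identified, via these equivalences and the fact that $F$ is $t$-exact, with the map $K(\CC^\heart) \to K(\DD^\heart)$ induced by the exact inclusion of abelian categories. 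That this inclusion induces an equivalence on $K$-theory is exactly Quillen's devissage theorem \cite[Theorem 4]{quillenhigherktheory}: its hypotheses are that $\CC^\heart \subseteq \DD^\heart$ is a full abelian subcategory closed under subobjects and quotients in $\DD^\heart$, and that every object of $\DD^\heart$ has a finite filtration with subquotients in $\CC^\heart$ — the filtration we produced above, and the closure property which I would extract from the construction of the $t$-structure (an object of $\DD^\heart$ in the image of $\CC^\heart$, being $F(c)$ with $c \in \CC^\heart$, has all its $\DD^\heart$-subobjects and quotients again of this form, using $t$-exactness of $F$ and full faithfulness on hearts). Chasing the commuting square then forces the top map $K(\CC^\heart) \to K(\DD^\heart)$ to be an equivalence as well.

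I expect the main obstacle to be the very first step: showing that the subcategories $\DD_{\geq 0}$, $\DD_{\leq 0}$ defined above actually assemble into a $t$-structure, and in particular that the truncation functors exist and are exact. The subtlety is that $\DD_{\geq 0}$ is defined by a closure condition while $\DD_{\leq 0}$ is defined by an orthogonality condition, and reconciling the two — producing for each $d \in \DD$ a fiber sequence $d_{\geq 1} \to d \to d_{\leq 0}$ with the pieces in the right places — requires knowing that $\DD_{\geq 1}$ is not too large, which is where conditions (A) and (B) must be used together and in an essential way. A clean way to organize this is to first establish that $\map_\DD(F(c), F(c'))$ is connective (indeed agrees with $\map_\CC(c,c')$ in degree $0$) for $c \in \CC_{\geq 0}$, $c' \in \CC_{\leq 0}$, bootstrapping from (B) via the skeletal filtration of $c$ and $c'$ by their Postnikov towers in the $t$-structure on $\CC$; once this connectivity statement is available, orthogonality and the existence of truncations follow by a generation argument using (A). Everything after the $t$-structure is constructed is, by contrast, a formal consequence of the two cited black boxes.
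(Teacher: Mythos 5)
Your macro strategy is the same as the paper's: construct a bounded $t$-structure on $\DD$ for which $F$ is $t$-exact, invoke Barwick's theorem of the heart for both vertical maps, then apply Quillen's devissage to the inclusion of hearts. You also correctly identify where the work is (constructing the $t$-structure) and that condition (B) is what drives $t$-exactness. However, the proposal leaves the hardest step unresolved, and the route you sketch for filling it does not go through as stated.

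The first gap is the existence of truncation functors. You propose to define $\DD_{\geq 0}$ by a closure condition and $\DD_{\leq 0}$ by orthogonality directly inside the small category $\DD$, then deduce the truncation fiber sequences ``by a generation argument using (A).'' But orthogonality plus generation does not give truncations in a small stable category; one needs an adjoint functor argument, which is exactly why the paper first passes to $\Ind(\DD)$ (where \cite[Prop.~1.4.4.11]{HA} produces an accessible $t$-structure for free) and only then proves that this $t$-structure restricts to compact objects. This passage to $\Ind$-categories is not cosmetic — it is the key maneuver that makes the construction possible. The second gap is the filtration property for $\DD^\heart$: you assert that every object of $\DD^\heart$ has a finite filtration with associated graded in the image of $\CC^\heart$ ``essentially from the construction of $\DD_{\geq 0}$ read in the heart,'' but this requires showing the class of objects admitting such a filtration is closed under kernels and cokernels. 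The paper establishes this in Proposition~\ref{prop:small-t} by running a spectral sequence and using that $F|_{\CC^\heart}$ has image closed under kernels and cokernels — a nontrivial argument that your proposal elides. Finally, your verification of the subobject hypothesis for Quillen's devissage (``using $t$-exactness of $F$ and full faithfulness on hearts'') is incomplete: full faithfulness gives closure under quotients, but showing a $\DD^\heart$-subobject of $F(c)$ lies in $\CC^\heart$ requires the filtration property again, as in the paper's final proof where one quotients $c$ by the graded pieces of the filtration on the subobject.
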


Before proceeding, we give a sketch of the strategy we follow in proving this theorem.
The final step is applying Barwick's theorem of the heart and Quillen's devissage theorem to produce $K$-theory equivalences. In order to apply these results we need to produce a bounded $t$-structure on $\DD$ which is relatively well behaved. The key idea is that after passing to categories of ind-objects it is in fact quite easy to produce such a $t$-structure. Condition (B) is then rigged so that we have the control necessary to restrict this $t$-structure to compact objects in $\Ind(\DD)$ (i.e. $\DD$).
For the remainder of this section the notation from the statement of \Cref{thm:cat-main} will remain in place and we assume $F$ satisfies conditions (A) and (B).

Passing to ind-completions gives us an induced diagram
\begin{center}
  \begin{tikzcd}
    \CC \ar[r,"F"]\ar[d,hook] & \DD\ar[d,hook]\\
    \ar[r,"F^*"]\Ind(\CC) & \Ind(\DD)
  \end{tikzcd}
\end{center}
where the vertical arrows are each the inclusion of the full subcategory of compact objects\footnote{We will suppress any further mention of these inclusions.}.

As promised, we begin by producing a $t$-structure on the level of ind-objects.
This is rather easy since the category of ind-objects is presentable.
\begin{lem}[{\cite[Proposition 1.4.4.11]{HA}}]\label{lem:generalbigt}
  Let $\AA$ be a presentable, stable category.
  If $\{X_\alpha\}$ is a small collection of objects in $\AA$, then there is an accessible $t$-structure, $(\AA_{\geq 0}, \AA_{\leq 0})$, on $\AA$ such that $\AA_{\geq 0}$ is the smallest full subcategory of $\AA$ containing each $X_\alpha$ and closed under colimits and extensions. The full subcategory of coconnective objects is characterized by the condition $Y \in \AA_{\leq 0}$ if and only if $\Map(\Sigma X_{\alpha},Y) = 0$ for each $X_\alpha$.
\end{lem}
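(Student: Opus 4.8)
The plan is to build the $t$-structure on $\AA$ by hand from the given generators: produce a connective truncation functor using presentability, and then check the axioms directly, with the hypotheses entering at exactly two points.

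\emph{Setup and the coconnective part.} Let $\AA_{\geq 0}$ denote the smallest full subcategory of $\AA$ containing the $X_\alpha$ and closed under small colimits and extensions; it contains $0$, and since $\Sigma X$ is the pushout $0\sqcup_X 0$ it is closed under $\Sigma$. For a fixed $Y\in\AA$ the full subcategory $\{Z:\map_\AA(Z,Y)\simeq 0\}$ is closed under colimits and extensions and contains $0$, hence contains all of $\AA_{\geq 0}$ as soon as it contains each $X_\alpha$. Applying this with the objects $Y$ satisfying $\Map_\AA(\Sigma X_\alpha,Y)\simeq *$ for all $\alpha$, and recalling that in any $t$-structure the coconnective part is the right orthogonal $(\AA_{\geq 1})^{\perp}$ of $\AA_{\geq 1}:=\Sigma\AA_{\geq 0}$, one sees that once a $t$-structure with connective part $\AA_{\geq 0}$ is produced, its coconnective part is automatically $\{Y:\Map_\AA(\Sigma X_\alpha,Y)\simeq *\ \forall\alpha\}$, as claimed. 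This description simultaneously realizes $\AA_{\leq 0}$ as the local objects for the small set of maps $\{\Sigma X_\alpha\to 0\}$, i.e. as an accessible localization of $\AA$, which will give accessibility of the $t$-structure.

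\emph{The truncation functor.} The one genuinely non-formal input is that $\AA_{\geq 0}$ is presentable and its inclusion $i\colon\AA_{\geq 0}\hookrightarrow\AA$ preserves small colimits. To see this, choose a regular cardinal $\kappa$ with $\AA$ $\kappa$-compactly generated and all $X_\alpha$ $\kappa$-compact; then $\AA_{\geq 0}$ is generated under $\kappa$-filtered colimits by the essentially small closure of $\{X_\alpha\}$ under $\kappa$-small colimits, extensions and retracts inside $\AA^{\kappa}$, whence presentability and cocontinuity of $i$ by the standard accessibility bookkeeping. I expect this to be the main obstacle: it is the set-theoretic heart of the statement and the only place the hypothesis that $\AA$ is presentable is used. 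Granting it, the adjoint functor theorem provides a right adjoint $\tau_{\geq 0}\colon\AA\to\AA_{\geq 0}$.

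\emph{Verifying the axioms.} Set $\AA_{\leq -1}:=(\AA_{\geq 0})^{\perp}$ and $\AA_{\leq 0}:=\Sigma\AA_{\leq -1}$. Closure of $\AA_{\geq 0}$ under $\Sigma$ gives $\AA_{\geq 0}[1]\subseteq\AA_{\geq 0}$, and the orthogonality axiom is the definition of $\AA_{\leq -1}$. For the truncation axiom, fix $X\in\AA$, let $\tau_{\geq 0}X\to X$ be the counit and $X'':=\mathrm{cofib}(\tau_{\geq 0}X\to X)$. For $C\in\AA_{\geq 0}$ all $\Sigma^nC$ with $n\geq 0$ lie in $\AA_{\geq 0}$, so the adjunction makes $\map_\AA(C,\tau_{\geq 0}X)\to\map_\AA(C,X)$ an isomorphism on $\pi_n$ for $n\geq 0$, whence $\map_\AA(C,X'')$ vanishes in degrees $\geq 1$. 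For degree $0$, given $g\colon C\to X''$ pull back the fiber sequence $\tau_{\geq 0}X\to X\to X''$ along $g$ to get a fiber sequence $\tau_{\geq 0}X\to E\to C$; closure under extensions forces $E\in\AA_{\geq 0}$, so $E\to X$ factors through $\tau_{\geq 0}X$ by the adjunction, hence $E\to X\to X''$ (which equals $g$ composed with $E\to C$) is null, so $g$ factors through $\mathrm{cofib}(E\to C)\simeq\Sigma\tau_{\geq 0}X$; since $[\Sigma\tau_{\geq 0}X,X'']=\pi_1\map_\AA(\tau_{\geq 0}X,X'')=0$, the map $g$ is null. Thus $\Map_\AA(C,X'')\simeq *$ for all $C\in\AA_{\geq 0}$, i.e. $X''\in\AA_{\leq -1}$, so $\tau_{\geq 0}X\to X\to X''$ is the desired decomposition. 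Therefore $(\AA_{\geq 0},\AA_{\leq 0})$ is a $t$-structure; it is accessible and its coconnective part has the stated form by the first paragraph.
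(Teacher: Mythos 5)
The paper offers no proof of this lemma---it is quoted verbatim from \cite[Proposition 1.4.4.11]{HA}---and your argument is a correct reconstruction of the standard proof of that result: presentability of $\AA_{\geq 0}$ gives the right adjoint $\tau_{\geq 0}$, and closure under extensions is used exactly where it must be, to kill $\pi_0\Map(C,X'')$ via the pullback $E$. The one step you leave as a sketch (presentability of $\AA_{\geq 0}$ and cocontinuity of the inclusion) is indeed the set-theoretic crux, and your outline of it is the standard one.
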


We equip $\Ind(\CC)$ with the $t$-structure whose connective part is generated by $\CC_{\geq 0}$ and we equip $\Ind(\DD)$ with the $t$-structure whose connective part is generated by $F(\CC_{\geq 0})$.

\begin{lem} \label{lem:big-t}
  $F^*$ is $t$-exact.
\end{lem}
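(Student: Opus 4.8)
**Plan for proving Lemma (\texorpdfstring{$F^*$}{F*} is $t$-exact).**

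The plan is to verify the two halves of $t$-exactness separately, i.e.\ that $F^*$ carries $\Ind(\CC)_{\geq 0}$ into $\Ind(\DD)_{\geq 0}$ and $\Ind(\CC)_{\leq 0}$ into $\Ind(\DD)_{\leq 0}$. The first half is nearly formal: $F^*$ is a left adjoint, hence preserves colimits, and it visibly preserves cofiber sequences (it is exact), so it preserves extensions; since by \Cref{lem:generalbigt} the subcategory $\Ind(\CC)_{\geq 0}$ is the smallest full subcategory containing $\CC_{\geq 0}$ and closed under colimits and extensions, and since $F^*$ sends the generators $\CC_{\geq 0}$ into $F(\CC_{\geq 0}) \subseteq \Ind(\DD)_{\geq 0}$ by our choice of $t$-structure on $\Ind(\DD)$, it follows that $F^*(\Ind(\CC)_{\geq 0}) \subseteq \Ind(\DD)_{\geq 0}$.

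The content is in right $t$-exactness: if $Y \in \Ind(\CC)_{\leq 0}$ then $F^*Y \in \Ind(\DD)_{\leq 0}$. Using the characterization in \Cref{lem:generalbigt}, I need to show $\Map(\Sigma F(X), F^*Y) = 0$ for every $X \in \CC_{\geq 0}$. By adjunction $\Map(\Sigma F(X), F^*Y) \simeq \Map(\Sigma X, F_* F^* Y)$, so it would suffice to know that $F_* F^* Y$ remains in $\Ind(\CC)_{\leq 0}$ — equivalently that $F_*$ is right $t$-exact. Here is where conditions (A) and (B) must enter: (B) controls the behavior of $F_*$ on the heart (it forces the unit $Y \to F_* F^* Y$ to be close to an equivalence on $\pi_0$, at least on objects built from $\CC^\heart$), and (A), that the image of $F$ generates $\DD$, ensures there are enough objects $F(X)$ to detect connectivity in $\Ind(\DD)$ — i.e.\ an object $Z \in \Ind(\DD)$ lies in $\Ind(\DD)_{\leq 0}$ iff $\Map(\Sigma F(X), Z) = 0$ for all $X \in \CC_{\geq 0}$, not merely for $X$ in some fixed generating set. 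So the argument should run: reduce to checking $\Map(\Sigma F(X), F^* Y) = 0$ on generators $X \in \CC_{\geq 0}$; transpose across the adjunction; and then use (A)+(B) to bound the connectivity of $F_* F^* Y$ — most cleanly by first establishing that $F^*$ restricted to hearts lands in $\Ind(\DD)^\heart$ and that the composite $\pi_0 F_* F^* : \Ind(\CC)^\heart \to \Ind(\CC)^\heart$ is (a quotient of, or equal to) the identity, which is exactly what (B) buys after ind-completion.

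The main obstacle I anticipate is the interaction between $F_*$ and the $t$-structure: $F_*$ is only a right adjoint, so it need not preserve colimits, and there is no reason a priori for it to be right $t$-exact — one genuinely needs conditions (A) and (B), and the ind-completion, to make this work. Concretely, the delicate point is passing from the hypothesis ``$F$ fully faithful on $\CC^\heart$'' (a statement about compact objects and ordinary mapping spectra) to a statement about $F^* : \Ind(\CC) \to \Ind(\DD)$ and its right adjoint that is strong enough to control $\pi_0 F_* F^* Y$ for a possibly large coconnective $Y$; this requires knowing that filtered colimits in $\Ind(\DD)$ are computed compatibly with the $t$-structure (which holds because the $t$-structure of \Cref{lem:generalbigt} on a presentable stable category has $\Ind(\DD)_{\geq 0}$ closed under filtered colimits, so truncation commutes with filtered colimits) and then a density/generation argument reducing the general $Y$ to the heart case. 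I would expect the authors to isolate a sublemma saying something like: for $X \in \CC^\heart$, $F(X) \in \Ind(\DD)^\heart$ and the unit $X \to F_* F^* X$ exhibits $X$ as $\pi_0 F_* F^* X$; everything else then follows by the closure and characterization properties of \Cref{lem:generalbigt}.
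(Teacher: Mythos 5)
Your overall strategy (use the mapping-space characterization of $\Ind(\DD)_{\leq 0}$ from Lemma~\ref{lem:generalbigt}, then reduce the verification to small objects where condition (B) can be applied) is the right one, and the transposition across the adjunction to ``$F_*F^*Y \leq 0$'' is a valid reformulation of what must be checked. The left $t$-exactness half matches the paper exactly. However, there are two substantive issues with the right $t$-exactness half.

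First, you invoke condition (A) where it plays no role. The paper's proof of this lemma uses only (B). The ``detection of coconnectivity in $\Ind(\DD)$ by the objects $F(X)$'' that you attribute to (A) is already automatic from Lemma~\ref{lem:generalbigt}: the $t$-structure on $\Ind(\DD)$ was \emph{defined} to be the one generated by $F(\CC_{\geq 0})$, so an object of $\Ind(\DD)$ is coconnective precisely when it receives no maps from $\Sigma F(X)$ for $X \in \CC_{\geq 0}$. No generation hypothesis on $\DD$ is needed, and in fact (A) only becomes relevant later, in Proposition~\ref{prop:small-t}, where one must show the $t$-structure restricts to $\DD$.

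Second, the crucial reduction step is left too vague to constitute a proof, and the sublemma you propose is not the statement that's needed. Knowing that the unit $X \to F_*F^*X$ is a $\pi_0$-isomorphism does not bound the \emph{positive} homotopy of $F_*F^*X$, which is what you must rule out. The paper's argument is more direct and avoids $F_*$ altogether: write $x \in \Ind(\CC)_{\leq 0}$ as a filtered colimit of \emph{compact} coconnective objects (possible since the $t$-structure on $\CC$ is bounded and hence restricts to compacts); since $F^*$ preserves filtered colimits and $F^*(c)$ is compact, one reduces to $x$ compact; then $c$ and $x$ are both bounded objects of $\CC$, so their Postnikov towers have finitely many heart pieces, and (B) together with the coconnectivity of mapping spectra between objects of an abelian heart finishes the computation. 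The difference from your sketch is that the reduction is to \emph{compact} objects first (via filtered colimits and compactness of $F^*(c)$), and only then, via boundedness, to the heart; your appeal to a ``density/generation argument reducing the general $Y$ to the heart case'' skips the filtered-colimit step that is what actually makes the passage from $\CC$ to $\Ind(\CC)$ work.
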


\begin{proof}  
  $F^*$ sends connective objects to connective objects by construction.  
  To show that $F^*$ preserves coconnectivity we need to check that
  for every $c \in \CC_{\geq 1}$ and $x \in \Ind(\CC)_{\leq 0}$
  the mapping space $\Map(F^*(c),F^*(x))$ is contractible.  
  Since the $t$-structure on $\Ind(\CC)$ restricts to compact objects
  we can write $x$ as a filtered colimit of compact, coconnective objects.    
  This implies (since $F^*$ is a left adjoint)
  that it suffices to prove $\Map(F^*(c), F^*(x)) = 0$ when $x$ is compact.
  Via the boundedness of the $t$-structure on $\CC$ this follows from condition (B).  
  
\end{proof}

At this point are now ready to prove that the $t$-structure on $\Ind(\DD)$ restricts to a bounded $t$-structure on $\DD$. The main idea in proving this is that on the one hand, (A) guarantees that every object in $\DD$ is only finitely many steps away from being in the image of $F$, while on the other hand, the $t$-structure on $\CC$ can be used to produce a rich collection of compact objects in $\Ind(\DD)^\heart$.

\begin{prop} \label{prop:small-t}
  The $t$-structure on $\Ind(\DD)$ restricts to a bounded $t$-structure on $\DD$.
  Each $d \in \DD^\heart$ has a finite filtration with associated graded in the image of $F|_{\CC^{\heart}}: \CC^\heart \to \DD^\heart$.
\end{prop}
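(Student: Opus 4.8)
The plan is to verify the two axioms of a bounded $t$-structure on $\DD$ by restricting the one already built on $\Ind(\DD)$. Write $\DD_{\geq 0} := \DD \cap \Ind(\DD)_{\geq 0}$ and $\DD_{\leq 0} := \DD \cap \Ind(\DD)_{\leq 0}$. The orthogonality $\Map(X,Y) = 0$ for $X \in \DD_{\geq 1}$, $Y \in \DD_{\leq 0}$ is inherited for free from $\Ind(\DD)$, so the only real content is: (i) every object of $\DD$ lies in $\Ind(\DD)_{[-n,n]}$ for some $n$ (boundedness), and (ii) for $d \in \DD$ the truncation triangle $\tau_{\geq 1} d \to d \to \tau_{\leq 0} d$ computed in $\Ind(\DD)$ has its outer terms again in $\DD$, i.e. compact. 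Once (i) and (ii) hold, the induced $t$-structure on $\DD$ is automatically bounded by (i), and $F$ is $t$-exact because $F^*$ is ($F(\CC_{\geq 0}) \subseteq \DD_{\geq 0}$ by construction, and $F(\CC_{\leq 0}) \subseteq \DD_{\leq 0}$ follows from \Cref{lem:big-t} together with compactness of objects of $\CC$).

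For boundedness (i): condition (A) says the image of $F$ generates $\DD$ under finite colimits, shifts, and retracts. Since $F$ is bounded on $\CC$ (the $t$-structure on $\CC$ is bounded, so $F(c) \in \Ind(\DD)_{[-n_c, n_c]}$ for each generator $c$), and $\Ind(\DD)_{[-n,n]}$ is closed under finite colimits, shifts by bounded amounts, and retracts, an induction on the number of cells expressing $d \in \DD$ in terms of $F(\CC)$ shows $d$ is bounded. Concretely, the full subcategory of $\DD$ consisting of objects bounded in the $\Ind(\DD)$ $t$-structure is a thick subcategory containing $F(\CC)$, hence is all of $\DD$.

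The hard part is (ii): showing $\tau_{\geq 1} d$ and $\tau_{\leq 0} d$ stay compact. This is where I would use the second sentence of the proposition as the engine rather than as a byproduct. The strategy: first analyze $\Ind(\DD)^\heart$. Using \Cref{lem:big-t}, $F^*$ restricts to an exact, fully faithful (by (B), after ind-completion) functor $\Ind(\CC)^\heart \to \Ind(\DD)^\heart$, and the generation hypothesis (A) propagates to show that $F^*(\Ind(\CC)^\heart)$ generates $\Ind(\DD)^\heart$ under colimits and extensions — so every object of $\Ind(\DD)^\heart$ is a filtered colimit of objects admitting finite filtrations with associated graded in the image of $F|_{\CC^\heart}$. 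In particular $\Ind(\DD)^\heart$ has a generating set of compact objects lying in $\DD$, and the compact objects of $\Ind(\DD)^\heart$ are exactly $\DD^\heart := \DD \cap \Ind(\DD)^\heart$, each of which — by a standard argument writing a compact object as a retract of a finite colimit of the generators and then noting finiteness of the filtrations — admits a finite filtration with associated graded in $F(\CC^\heart)$. This gives the final clause of the proposition. To finish (ii), I would argue by induction on the "height" $n$ with $d \in \Ind(\DD)_{[-n,n]}$: for $d \in \DD$, $H_n(d) \in \Ind(\DD)^\heart$ is compact (a homotopy group of a compact object, using that the generating compacts of the heart are dense enough — this is the delicate point and uses (B) to control $\Map$ out of $F(\CC^\heart)$), so $\Sigma^n H_n(d) \in \DD$, and the fiber of $d \to \Sigma^n H_n(d)$ lies in $\Ind(\DD)_{[-n, n-1]}$ and in $\DD$, so by induction its truncations are compact; reassembling gives that $\tau_{\geq 1} d, \tau_{\leq 0} d \in \DD$. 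The subtlety I expect to wrestle with is precisely the claim that $H_*(d)$ is compact for $d \in \DD$ — equivalently that the $t$-structure on $\Ind(\DD)$ is "compactly generated in a way compatible with $\DD$" — and this is exactly the step where conditions (A) and (B) must be used in tandem: (B) to know the heart-level mapping spectra out of $F(\CC^\heart)$ are controlled, and (A) to know $\DD$ is small enough over $F(\CC)$ that this control suffices.
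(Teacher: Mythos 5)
There is a genuine gap, and you have actually located it yourself without filling it. Your step (ii) hinges on showing that for $d\in\DD$ each homotopy object $\pi_i^\heartsuit(d)$ is compact (and admits the desired $\CC^\heartsuit$-filtration), and you flag this as ``the delicate point'' but then only gesture at how (A) and (B) ``in tandem'' should give it. The paper's proof turns precisely on this point and supplies the missing argument: it defines a property $(*)$ on $d\in\DD$ (``$d$ is bounded and each $\pi_i^\heartsuit(d)$ has a finite filtration with associated graded in $\CC^\heartsuit$''), observes that objects satisfying $(*)$ are automatically compact since compacts are closed under extensions, and then shows the class of $d$ satisfying $(*)$ is thick by descending to a heart-level condition $(**)$ and proving closure under kernels, cokernels and extensions via a pasted-filtration spectral sequence whose $E_1$-page lives in $\CC^\heartsuit$ and whose differentials stay there because $F|_{\CC^\heartsuit}$ is fully faithful and exact. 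That spectral-sequence argument is what is missing from your proposal, and it is the content of the proposition.

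Two subsidiary issues. First, the assertion that ``the compact objects of $\Ind(\DD)^\heartsuit$ are exactly $\DD\cap\Ind(\DD)^\heartsuit$'' conflates compactness in the abelian category $\Ind(\DD)^\heartsuit$ with compactness in the stable category $\Ind(\DD)$; these are a priori different, and neither this identification nor any of it is needed in the paper's argument. Second, the ``standard argument writing a compact object as a retract of a finite colimit of the generators and then noting finiteness of the filtrations'' does not obviously work: even granting the retract decomposition, a retract of an object with a $\CC^\heartsuit$-filtration does not visibly inherit one unless you already know the filtered objects are closed under kernels and cokernels of idempotents --- which is exactly the kernel/cokernel closure that the paper establishes by the spectral-sequence argument and that you are trying to sidestep. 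So the logical dependency is inverted: the filtration statement for $\DD^\heartsuit$ is not a shortcut to compactness of truncations; rather, it is proved simultaneously with compactness via the thickness argument.
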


\begin{proof}
 Consider the subcategory of $\DD$ of objects $d$ which satisfies the following condition,
  \begin{enumerate}
  \item[$(*)$] $d$ is bounded and each $\pi_i^\heart(d)$ has a finite filtration whose associated graded lies in $\CC^\heart \subseteq \Ind(\DD)^\heart$.
  \end{enumerate}
  Note that if $(*)$ is satisfied, then because compact objects are closed under extensions and the image of $\CC^{\heart}$ is compact, each $\pi_i^{\heart}(d)$ as well as $d$ itself is compact. In order to prove the proposition it will suffice to show that the every object in $\DD$ satisfies $(*)$. We begin by observing that for $c \in \CC^\heart$ its image under $F$ satisfies $(*)$. Using hypothesis (A) it will now suffice to show that full subcategory of objects of $\DD$ satisfying $(*)$ is thick. 

  Now, since the condition $(*)$ is stated entirely in terms of homotopy groups, it will suffice to show that the corresponding condition $(**)$ on the level of the heart cuts out a subcategory closed under kernels, cokernels and extensions\footnote{This uses that fact that these operations suffice to describe how homotopy groups change under cofiber sequences and idempotents}.
  \begin{enumerate}
  \item[$(**)$] $d \in \Ind(\DD)^{\heart}$ has a finite filtration whose associated graded lies in $\CC^\heart$.
  \end{enumerate}
  
  Since filtrations can be pasted, the collection of objects satisfying $(**)$ is closed under extensions. We will handle kernels and cokernels simultaneously. Suppose $A, B \in \Ind(\DD)^\heart$ satisfy $(**)$ and $r$ is a map between them. We can paste the filtrations on $A$ and $B$ to form a filtration on the cofiber, $\cof(r)$, whose associated spectral sequence converges to (an associated graded of) $\pi_*^\heart\cof(x)$ and has $E_1$-page given by the associated graded of $A$ on the $0$-line and the associated graded of $B$ on the $1$-line. 
  By hypothesis, the $E_1$-page of this spectral sequence involves only objects of $\CC^\heart$.
  Now, since $F$ is $t$-exact and fully faithful on $\CC^{\heartsuit}$, kernels and cokernels of maps between objects in the image of $F|_{C^{\heartsuit}}$ remain in the image of $F|_{C^{\heartsuit}}$.
  Consequently, as we run the differentials in this spectral sequence we will not leave $\CC^\heart$.
  Since the spectral sequence has only finitely many pages we learn that it abuts to a filtration of the desired type on the kernel and cokernel of $r$ (which appears as $\pi_1^\heart(\cof(r))$ and $\pi_0^\heart(\cof(r))$ respectively).
\end{proof}

We now recall Quillen's devissage and the theorem of the heart, which we use to finish the proof of the main theorem.

\begin{thm}[{\cite{Barwick_2015}} Barwick's theorem of the heart]\label{thm:ofheart} Let $\CC$ be a stable category with bounded $t$-structure. Then the inclusion $\CC^{\heart} \to \CC$ induces an equivalence on connective $K$-theory.
\end{thm}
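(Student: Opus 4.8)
The plan is to prove this by the standard reduction to bounded pieces followed by an induction on length; this is exactly the $\infty$-categorical form of Neeman's theorem of the heart, so in the body of the paper we simply cite \cite{Barwick_2015}, but here is the shape of the argument. For integers $a \le b$ let $\CC_{[a,b]} \subseteq \CC$ denote the full subcategory of objects $X$ with $\pi_i^\heart X = 0$ for $i \notin [a,b]$. This subcategory is closed under extensions, hence inherits the structure of an exact $\infty$-category (in the sense of Barwick), with admissible monomorphisms the maps whose cofiber again lies in $\CC_{[a,b]}$. Since the $t$-structure is bounded we have $\CC = \bigcup_{a \le b} \CC_{[a,b]}$ as a filtered union, and the $K$-theory of exact $\infty$-categories commutes with filtered colimits, so it suffices to show that the inclusion $\CC^\heart \hookrightarrow \CC_{[0,n]}$ is a $K$-equivalence for each $n \ge 0$ (having shifted the window to start at $0$). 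I would induct on $n$, the case $n = 0$ being the identity. For the inductive step one uses that every $X \in \CC_{[0,n]}$ carries a finite Postnikov filtration whose $i$-th associated graded is $\pi_i^\heart(X)[i] \in \CC^\heart[i]$; thus $\CC^\heart$ and its shifts generate $\CC_{[0,n]}$ under extensions, which is precisely the data one feeds into Quillen's devissage in the abelian setting.

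The essential difficulty is that $\CC_{[0,n]}$ is not abelian, and the naive attempt to build a localization sequence $K(\CC_{[0,n-1]}) \to K(\CC_{[0,n]}) \to K(\CC^\heart)$ out of the top truncation functor $\tau_{\ge n}$ breaks down, since $\tau_{\ge n}$ does not preserve cofiber sequences and so is not exact. Circumventing this is the whole content of the theorem. The route is to introduce the auxiliary exact $\infty$-category $\mathcal{F}_n$ of filtered objects $0 = X_0 \hookrightarrow X_1 \hookrightarrow \dots \hookrightarrow X_{n+1}$ in $\CC_{[0,n]}$ whose successive graded pieces lie in prescribed shifts of $\CC^\heart$; iterating Barwick's additivity theorem along the associated-graded functors gives $K(\mathcal{F}_n) \simeq \bigoplus_{i=0}^{n} K(\CC^\heart)$. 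One then compares $\mathcal{F}_n$ with $\CC_{[0,n]}$ through the "top object" functor $X_\bullet \mapsto X_{n+1}$ and the Postnikov-tower assignment, and uses the inductive hypothesis to determine which retract of $\bigoplus_{i=0}^{n} K(\CC^\heart)$ the $K$-theory of $\CC_{[0,n]}$ cuts out — namely the single copy of $K(\CC^\heart)$ coming from constant filtrations.

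The main obstacle is this final comparison. Because truncation functors are not exact, the Postnikov-tower assignment is not literally a map of exact $\infty$-categories, and the self-map it would induce on $\mathcal{F}_n$ (replace a filtration by the Postnikov filtration of its colimit) is visibly far from the identity; showing that on $K$-theory it nonetheless realizes the correct idempotent onto the wanted summand requires a careful analysis at the level of the $S_\bullet$-construction, via the machinery of \cite{Barwick_2015} (its additivity and fibration theorems for exact $\infty$-categories). This is the technical heart of that paper — and historically the place where the analogous argument for triangulated categories was delicate — so rather than reprove it I would follow that reference verbatim.
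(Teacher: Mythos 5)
The paper does not prove this statement at all: it is quoted as a black box from Barwick's paper and fed into the proof of the main theorem, and your proposal likewise defers to that reference, which is the same treatment. Your accompanying sketch (reduction to bounded windows, induction on length, and the observation that the non-exactness of the truncation functors is the real obstruction) is a reasonable outline of the standard strategy, but since the result is only cited, none of it is load-bearing here.
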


\begin{thm}[{\cite[Theorem 4]{quillenhigherktheory}} Quillen's devissage]\label{thm:quilldevissage} Let $\AA \subset \BB$ be an exact fully faithful inclusion of abelian categories with $\AA$ closed in $\BB$ under subobjects, and such that every object of $\BB$ has a finite filtration with associated graded in $\AA$. Then the inclusion $\AA \to \BB$ induces an equivalence on connective $K$-theory.
\end{thm}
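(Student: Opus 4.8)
The plan is to prove Quillen's devissage theorem following Quillen's original argument, adapted to the setting where $\BB$ is an exact category (or abelian category) and $\AA$ is a thick abelian subcategory closed under subobjects such that every object of $\BB$ admits a finite filtration with subquotients in $\AA$. The standard proof goes through Quillen's $Q$-construction and the resolution/dévissage machinery, so the first step is to set up the $Q$-construction $Q\AA \to Q\BB$ and recall that connective $K$-theory is $\Omega$ of its classifying space (or, in the modern formulation, that the $K$-theory space of an exact category is computed by $Q$). Then I would filter $Q\BB$ by the full subcategories $Q\BB_i$ spanned by objects $M \in \BB$ admitting a filtration of length $\leq i$ with associated graded in $\AA$; by hypothesis $\bigcup_i Q\BB_i = Q\BB$, and $Q\BB_0 = Q\AA$.

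Next I would run the key homotopy-theoretic input: Quillen's Theorem A applied to the inclusion $Q\AA \hookrightarrow Q\BB$. The heart of the matter is to show that for each object $M \in \BB$ the comma category (the relevant fiber of $Q\AA \to Q\BB$ over $M$, in Quillen's notation $Q\AA / M$ or $M \backslash Q\AA$) is contractible. Quillen handles this by an explicit homotopy: one shows that the functor sending an admissible layer to its intersection/image with a chosen subobject of $M$ lying in $\AA$ provides a chain of natural transformations contracting the comma category. The closure of $\AA$ under subobjects in $\BB$ is exactly what makes these intersections land in $\AA$, and the existence of a finite filtration is what lets the induction on filtration length terminate. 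I would present this as: (i) reduce to showing the comma fibers are contractible; (ii) for $M \in \AA$ the fiber has a terminal-ish object and is contractible by a standard argument; (iii) for general $M$ pick a subobject $M' \subset M$ with $M' \in \AA$ and $M/M'$ of shorter filtration length, and build the contracting homotopy from the span $M' \hookrightarrow M \twoheadleftarrow$ (the relevant admissible sub/quotient maneuvers) combined with the inductive hypothesis.

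I expect the main obstacle to be (iii): producing the explicit natural transformations that contract the comma category, keeping careful track of which maps in $Q\AA$ are "injections" versus "surjections" and verifying that every intermediate object built by pullback and pushout along admissible monos and epis remains inside $\AA$. This is where Quillen's original proof is genuinely clever rather than formal, and it is the step that uses the subobject-closure hypothesis in an essential way; the finite-length hypothesis enters only to make the induction well-founded. One then concludes by Theorem A that $BQ\AA \to BQ\BB$ is a weak equivalence, hence $K(\AA) \to K(\BB)$ is an equivalence on connective $K$-theory, which is the claim. A remark worth including is that in the $\infty$-categorical reformulation one may instead cite Barwick's or the Blumberg--Gepner--Tabuada framework and phrase the filtration argument via the additivity/localization formalism, but the cleanest self-contained route remains Quillen's $Q$-construction argument sketched above, so I would either reproduce it or simply cite \cite{quillenhigherktheory} directly — and indeed, since this is a classical theorem, the honest move here is to attribute it to Quillen and refer the reader to \cite[Theorem 4]{quillenhigherktheory} rather than reprove it.
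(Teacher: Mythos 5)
The paper offers no proof of this statement---it is a verbatim recollection of Quillen's d\'evissage theorem, cited and used as a black box---so your closing remark that the honest move is simply to attribute it to \cite[Theorem 4]{quillenhigherktheory} is exactly what the paper does. Your sketch of the original argument (Theorem A applied to $Q\AA \to Q\BB$, identification of the comma categories with posets of layers $(M_0,M_1)$ in $M$ with $M_1/M_0 \in \AA$, and contraction by induction on filtration length) is a faithful outline of Quillen's proof, with the caveats that the crucial step (iii) is only flagged rather than executed, and that Quillen's induction intersects layers with a subobject $M' \subset M$ carrying the \emph{shorter} filtration and satisfying $M/M' \in \AA$, rather than with $M' \in \AA$ and $M/M'$ shorter as you wrote.
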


\begin{proof}[Proof (of \Cref{thm:cat-main}).]
  At this point we have already shown that $\DD$ admits a bounded $t$-structure (\Cref{prop:small-t}) for which $F$ is $t$-exact (\Cref{lem:big-t}). This allows us to examine the square on $K$-theory,
    \begin{center}
    \begin{tikzcd}
      K(\CC^\heart) \ar[r] \ar[d] & K(\DD^\heart) \ar[d] \\
      K(\CC) \ar[r,"K(F)"] & K(\DD).
    \end{tikzcd}
  \end{center}
  Because the $t$-structures on $\CC$ and $\DD$ are bounded, we can use \Cref{thm:ofheart} to see that
  the vertical maps are equivalences.

  In order to finish the proof it suffices to show that the top horizontal map is an equivalence, which we show by applying \Cref{thm:quilldevissage}.
  The map $f: \CC^\heart \to \DD^\heart$ is fully faithful and exact by construction, and we showed that the filtration condition is satisfied in \Cref{prop:small-t}.

  It remains to check that if $d \in \DD^\heart$ is a subobject of $c \in \CC^\heart$, then $d \in \CC^\heart$. Using the exactness of the inclusion it will suffice to instead show that $\coker(d \to c) \in \CC^\heart$. Using \Cref{prop:small-t} we can equip $d$ with a finite filtration with associated graded in $\CC^\heart$. The cokernel $\coker(d \to c)$ can be produced by successively quotienting $c$ by the pieces in the associated graded of the filtration on $d$, thus we only need to know that quotients by subobjects coming from $\CC^\heart$ stay in $\CC^\heart$. This last statement follows from the fact that $f$ is fully faithful and exact.
  
\end{proof}

\begin{rmk}\label{rmk:optimality}
  We end this section by observing that the following converse to \Cref{thm:cat-main} holds: if we have $F: \CC \to \DD$ a map which we can use the theorem of the heart and Quillen's devissage to prove is a $K$-equivalence, then (A) and (B) must hold.

  To see this, first note that to apply the theorem of the heart we need bounded $t$-structures on $\CC$ and $\DD$ so that $F$ is $t$-exact.
  To apply Quillen's devissage, the induced functor on hearts should by fully faithful and its image should generate $\DD^\heart$ under extensions.
  The condition on generating $\DD^\heart$ implies (A).
  The $t$-exactness of $F$, plus fully faithfulness on the heart implies (B).

  Simply put, this is saying that \Cref{thm:cat-main} is essentially equivalent to the combination of Quillen's devissage and the theorem of the heart.
\tqed
\end{rmk}


\section{Complements}
\label{sec:complements}

In this section we discuss a couple points which are complementary to \Cref{thm:cat-main}.
We begin by introducing some ideas from noncommutative geometry which provide a convenient language for thinking about our main theorem.
Then, we discuss variants of the main theorem and prove \Cref{thm:opener} from the introduction.
In the third subsection we briefly consider the simplifications and extensions to \Cref{thm:cat-main} we can make when $\CC^\heart$ is Noetherian.
We end the section by briefly discussing negative $K$-groups.

\subsection{Some nc geometry}\ 

For us noncommutative geometry refers to thinking about small idempotent complete stable categories equipped with a ``positive half'' closed under finite colimits and extensions. This is quite close to established notions of noncommutative geometry such as in \cite{orlovsmprop2016}, with the notable difference being that we work relative to the sphere rather than relative to a discrete base ring $k$. We explore this setting in some depth in \cite{ncgstuff} and in this section we build on the groundwork from that paper\footnote{Even though we cite results in \cite{ncgstuff}, we do not use anything particularly difficult from there, and so the results here can be considered independent of that paper.}.
Before proceeding we remind the reader of the main definitions.

\begin{dfn}
  We use $\Cat^{\perf}$ to denote the category of small idempotent complete stable categories.
  Our main objects of study are objects of $\Cat_{\geq0}^{\perf}$.
  This is the category of $\CC \in \Cat^{\perf}$ equipped with an idempotent complete prestable\footnote{As introduced in \cite[Appendix C]{SAG}} full subcategory $\CC_{\geq0}$ that generates it. Being prestable amounts to asking that $\CC_{\geq0}$ be closed under finite colimits and extensions.
  Often, we abuse notation by writing $\CC \in \Cat_{\geq0}^{\perf}$, leaving the subcategory of positive objects, $\CC_{\geq0}$, implicit.
\end{dfn}

\begin{exm}\label{exm:ringprestable}
  Given an $\E_1$-algebra $R$, the category compact $R$-modules, $\Mod(R)^{\omega}$, naturally lives in $\Cat^{\perf}_{\geq0}$. The positive objects are those built from $R$ via extensions, finite colimits and retracts.
  \tqed
\end{exm}

Given $\CC \in \Cat^{\perf}_{\geq0}$,
the subcatgory $\Ind(\CC_{\geq0}) \subset \Ind(\CC)$ 
determines a $t$-structure on $\Ind(\CC)$ (see \Cref{lem:generalbigt}). In fact, $\CC_{\geq0}$ can be recovered from the data of this $t$-structure.

\begin{exm}
  In the $t$-structure associated to \Cref{exm:ringprestable}, a connective object is one built out of copies of $R$ under colimits and extensions, and a coconnective object is one whose underlying spectrum is coconnective.
  \tqed
\end{exm}

\begin{dfn} \label{dfn:ncg-defs}
  Given $\CC \in \Cat_{\geq0}^{\perf}$, 
  \begin{itemize}  
  \item $\CC$ is \textit{regular} if the $t$-structure on $\Ind(\CC)$ restricts to $\CC$,
  \item $\CC$ is \emph{bounded} if each $c \in \CC$ is bounded as an object of $\Ind(\CC)$,
  \item a functor $\CC \to \DD$ is \textit{quasi-affine} if its image generates $\DD$ under finite colimits and retracts and 
  \item a quasi-affine functor $\CC \to \DD$ is \textit{unipotent} if it is fully faithful on $\Ind(\CC)^\heart$. \tqed
  \end{itemize}
\end{dfn}

\begin{exm} \label{lem:discreteregular}
  If $R$ is a discrete ring then, as a result of well-known arguments, $\Mod(R)^{\omega}$ is regular
  iff $R$ is left regular coherent.
  For a proof that states things in this way see \cite[Proposition 2.4]{ncgstuff}.
  \tqed
\end{exm}

\begin{exm}\label{prop:polyringreg}
  We show in \cite[Proposition 2.16]{ncgstuff} that if $\CC$ is regular, and $n\neq0$, then $\CC[x_n]$ is regular.\tqed
\end{exm}

\begin{rmk}
  If we think in terms of categories of quasicoherent sheaves,
  the reasoning behind the term quasi-affine is relatively transparent.
  
  The term unipotent bears more explanation.
  The key identifying features of a unipotent group are that maps between trivial representations can be computed on underlying and every representation is built out of extensions of trivial reps.
  Our definition takes these properties as the definition of unipotent.
  
  Note that conditions (A) and (B) of \Cref{thm:cat-main} are equivalent to saying that $F$ is unipotent. This lets us reinterpret \Cref{thm:cat-main} as saying that regularity can be transferred along unipotent maps.
  \tqed
\end{rmk}

\subsection{Other forms of the main theorem}\

In practice unipotence can be difficult to check
so we recall an equivalent condition which is often more transparent. The functor $F^*:\Ind(C) \to \Ind(D)$ has a right adjoint $F_*$, which is colimit preserving since $F^*$ preserves compact objects.

\begin{lem}[{\cite[Corollary 4.12]{ncgstuff}}] \label{lem:condbprime} 
  For a map $F : \CC \to \DD$ as in \Cref{thm:cat-main} condition (B)
  is equivalent to: 
  \begin{enumerate}
  \item[(B$'$)] For every $c \in \CC^{\heart}$,
    the cofiber of the unit map $c \to F_*F^*(c)$ is $\leq -1$ in the $t$-structure on $\Ind(\CC)$.
  \end{enumerate}
\end{lem}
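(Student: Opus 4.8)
The plan is to show the two conditions are equivalent by unwinding what each says about mapping spectra, using that $F_*$ is a colimit-preserving right adjoint and that the $t$-structure on $\Ind(\CC)$ is generated by $\CC_{\geq 0}$ (so in particular $\Ind(\CC)_{\geq 1}$ is generated under colimits and extensions by $\Sigma\CC_{\geq 0} = \CC_{\geq 1}$, and since the $t$-structure restricts to $\CC$, shifts of objects of $\CC^\heart$ and objects of $\CC_{\geq 1}$ together detect coconnectivity). Fix $c \in \CC^\heart$ and let $u\colon c \to F_*F^*(c)$ be the unit and $Q = \cof(u)$. For any $c' \in \CC^\heart$ and any $i$, adjunction gives $\map_{\Ind(\DD)}(F^*(c'), F^*(c)) \simeq \map_{\Ind(\CC)}(c', F_*F^*(c))$, so the fiber sequence $c \to F_*F^*(c) \to Q$ shows that $Q \in \Ind(\CC)_{\leq -1}$ is equivalent to: for every $c' \in \CC^\heart$, the map $\map_{\Ind(\CC)}(c', c) \to \map_{\Ind(\CC)}(c', F_*F^*(c)) \simeq \map_{\Ind(\DD)}(F^*(c'), F^*(c))$ is an equivalence, i.e. $F$ is fully faithful on $\CC^\heart$ as computed through the unit.

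First I would make precise that $Q \le -1$ can be tested against $\CC^\heart$ alone. The cofiber $Q$ lies in $\Ind(\CC)$; since $u$ is a map in $\Ind(\CC)$, $Q$ is compact, i.e. $Q \in \CC$. A bounded-below compact object $Q$ is $\le -1$ iff $\pi_0^\heart Q = 0$ and $\pi_i^\heart Q = 0$ for $i > 0$; but $c$ and $F_*F^*(c)$ are both connective (the latter because $F^*(c)$ is connective and $F_*$ is $t$-exact on connective objects — more carefully, $F_*$ is right adjoint to the $t$-exact $F^*$, hence left $t$-exact, so sends $\Ind(\DD)_{\geq 0}$... here one wants $F_*$ left $t$-exact, giving $F_*F^*(c) \in \Ind(\CC)_{\geq 0}$, wait this needs $F^*(c)\in \Ind(\DD)_{\geq 0}$ which holds), so $Q \in \Ind(\CC)_{\geq -1}$ automatically by the fiber sequence, and $Q \le -1$ is equivalent to $\pi_0^\heart Q = 0$, i.e. $c \to F_*F^*(c)$ is a cover onto $\tau_{\geq 0}$... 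Let me instead phrase it cleanly: $Q \le -1$ iff $\pi_i^\heart Q = 0$ for all $i \ge 0$, and since $\map(\Sigma^i c', Q) $ for $c' \in \CC^\heart$ detects $\pi_i^\heart Q$ by the theorem-of-the-heart-style argument (Yoneda in the heart), this is exactly the statement that $\pi_i \map_{\Ind(\DD)}(F^*c', F^*c) \to \pi_i\map_{\Ind(\CC)}(c', c)$... hold on, direction: $\map(c', \tau_{\le 0} F_*F^*(c))$.

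Concretely I would argue: (B$'$) $\iff$ for all $c' \in \CC^\heart$, $\map_{\Ind(\CC)}(c', Q)$ is coconnective and has vanishing $\pi_0$; equivalently $\pi_j\map(c', F_*F^*c) \cong \pi_j\map(c',c)$ for $j \le 0$ and the former vanishes for $j < 0$... Actually the cleanest route: since $c, F_*F^*(c) \in \Ind(\CC)_{\geq 0}$ and $c \in \CC^\heart$, applying $\map(c', -)$ for $c' \in \CC^\heart$ to the fiber sequence and noting $\map(c', c) = \map_{\CC^\heart}(c',c)$ is discrete (a set in degree $0$), we get that $Q \le -1$ iff $\map(c', F_*F^*(c))$ is discrete and equals $\map(c',c)$ for every $c' \in \CC^\heart$, and then one checks $Q \le -1$ using that objects of the form $c'$ and $\Sigma^{\ge 1}\CC_{\geq 0}$ generate enough to detect coconnectivity (since the $t$-structure is bounded on $\CC$). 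Translating $\map(c', F_*F^*(c)) \simeq \map(F^*c', F^*c)$ via adjunction, discreteness + agreement with $\map_{\CC^\heart}(c',c)$ for all $c' \in \CC^\heart$ is precisely condition (B), that $F|_{\CC^\heart}$ is fully faithful. The converse direction (B $\Rightarrow$ B$'$) runs the same equivalences backwards.

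The main obstacle, and the point deserving care, is the reduction that testing $Q \le -1$ against all of $\CC^\heart$ (rather than all of $\CC$, or all shifts) suffices — this is where boundedness of the $t$-structure on $\CC$ and the fact that $Q$ is already known to be $\ge -1$ get used, paralleling the argument in \Cref{lem:big-t}. Once that reduction is in hand, everything else is a formal manipulation of the adjunction $(F^*, F_*)$ and the fiber sequence defining $Q$. Since the paper states this as a citation to \cite[Corollary 4.12]{ncgstuff}, I would expect the proof here to be exactly this short unwinding, or simply a pointer to that reference.
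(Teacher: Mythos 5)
Your approach is essentially the one the paper intends (and the one used in the cited reference): unravel $(\mathrm{B}')$ through the adjunction $(F^*, F_*)$ and the defining fiber sequence for $Q$, and use boundedness of the $t$-structure on $\CC$ to reduce the coconnectivity test for $Q$ to maps from $\CC^\heartsuit$. The paper's sketch phrases it as unraveling $(\mathrm{B}')$ to ``$\cof(\map(d,c)\to\map(Fd,Fc))$ is coconnected for all $d\in\CC_{\geq0}$,'' then restricting to $d\in\CC^\heartsuit$ for one implication and using closure of coconnective objects under extensions plus boundedness for the other; your proposal does the same, folding the two directions into a single equivalence. Two of your details, however, are wrong and worth flagging.

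First, $F_*F^*(c)$ is not connective, and the reasoning you give for it does not work. In this context $F^*$ is only known to preserve \emph{connectivity} (that is built into the construction of the $t$-structure on $\Ind(\DD)$; $t$-exactness of $F^*$ is \Cref{lem:big-t}, which assumes $(\mathrm{B})$ and so cannot be used while proving $(\mathrm{B})\Leftrightarrow(\mathrm{B}')$). The right adjoint $F_*$ therefore preserves \emph{co}connectivity, not connectivity, so $F_*F^*(c)\in\Ind(\CC)_{\geq0}$ does not follow, and neither does $Q\in\Ind(\CC)_{\geq -1}$. Fortunately this is a detour: the reduction ``$Q\leq-1$ iff $\map(c',Q)\leq-1$ for all $c'\in\CC^\heartsuit$'' uses only boundedness of the $t$-structure on $\CC$ (Postnikov-filter $d\in\CC_{\geq0}$ into shifts of heart objects) and closure of coconnective spectra under extensions; it does not need $Q$ bounded below.

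Second, characterizing $Q\leq-1$ by ``$\map(c',F_*F^*(c))$ is discrete and equals $\map(c',c)$'' is too strong as a condition on mapping \emph{spectra}: the fiber sequence only forces $\pi_i\map(c',c)\to\pi_i\map(Fc',Fc)$ to be an isomorphism for $i\geq0$, and the negative homotopy (higher $\Ext$ groups of $c'$ against $c$, and whatever appears for $Fc',Fc$) is unconstrained by $(\mathrm{B}')$ and by $(\mathrm{B})$ alike. The correct statement is that the mapping \emph{space} $\Map_{\DD}(Fc',Fc)$ is discrete and equal to $\Hom_{\CC^\heartsuit}(c',c)$, i.e.\ $\pi_i\map(Fc',Fc)=0$ for $i>0$ and $\pi_0\map(Fc',Fc)\cong\Hom_{\CC^\heartsuit}(c',c)$. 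With that fix, the chain $Q\leq-1\iff\map(c',Q)\leq-1\;\forall c'\in\CC^\heartsuit\iff(\mathrm{B})$ goes through cleanly.
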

\begin{proof}[Proof sketch]
  Unraveling (B$'$) gives the statement that the cofiber of $\map(d,c) \to \map(Fd,Fc)$ is coconnected for all $c \in \CC^{\heart}$ and $d \in \CC_{\geq0}$.
  This visibly implies (B).  
  The key point in proving the reverse implication is using the fact that $\CC^{\heart}$ is closed under extensions\footnote{In this paper we only use that (B$'$) implies (B) and not the reverse implication.}. 
\end{proof}

We now provide a version of \Cref{thm:cat-main} for categories of modules over an $\E_1$-algebra from which \Cref{thm:opener} will follow.

\begin{prop}\label{prop:relativeopener}
  Let $f: A \to B$ be a map of $\E_1$-algebras such that	
  \begin{enumerate}
  \item $\Mod(A)^{\omega}$ is bounded and regular and
  \item $\cof(f)$ has tor amplitude in $[-\infty,-1]$ as a right $A$-module,
  \end{enumerate}
  then $\Mod(B)^{\omega}$ is bounded and regular, the base-change functor $(-)\otimes_AB$ is $t$-exact 
  and the map $K(A) \to K(B)$ is an equivalence.
\end{prop}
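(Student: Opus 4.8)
I would deduce Proposition~\ref{prop:relativeopener} from Theorem~\ref{thm:cat-main} applied to the base-change functor $F = (-)\otimes_A B \colon \Mod(A)^\omega \to \Mod(B)^\omega$. The strategy is to verify that $F$ is unipotent, i.e.\ satisfies (A) and (B), so that the machinery of the main theorem transfers regularity and gives the $K$-theory equivalence directly; the boundedness of $\Mod(B)^\omega$ and $t$-exactness of $F$ will come out of the same package (Proposition~\ref{prop:small-t} and Lemma~\ref{lem:big-t}).

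Condition (A) is immediate: $B = F(A)$ is a generator of $\Mod(B)^\omega$, so the image of $F$ generates $\DD$. The real content is condition (B), and here I would use the reformulation (B$'$) from Lemma~\ref{lem:condbprime}: it suffices to show that for every $c \in (\Mod(A)^\omega)^\heart$ — that is, every finitely presented discrete $A$-module — the cofiber of the unit $c \to F_* F^*(c) = (c \otimes_A B)$, viewed in $\Ind(\Mod(A)^\omega) = \Mod(A)$, is coconnective in degree $\leq -1$. Now $c \otimes_A B$ sits in a cofiber sequence $c \otimes_A A \to c \otimes_A B \to c \otimes_A \cof(f)$, where $\cof(f)$ is the cofiber of $f$ as a right $A$-module. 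Since $c$ is discrete and $\cof(f)$ has tor amplitude in $[-\infty,-1]$ as a right $A$-module, the relative tensor product $c \otimes_A \cof(f)$ is concentrated in homological degrees $\leq -1$; hence $\cof(c \to c\otimes_A B) \simeq c \otimes_A \cof(f)$ is $\leq -1$, which is exactly (B$'$). (One subtlety: I must check that the $t$-structure on $\Ind(\Mod(A)^\omega)$ agrees with the usual Postnikov $t$-structure on $\Mod(A)$ — this holds because $\Mod(A)^\omega$ is regular by hypothesis~(1), so the $t$-structure restricts, and on the heart it is the category of finitely presented discrete modules; the connective part is generated by $A$ under colimits and extensions, which is the standard connective part.)

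With (A) and (B) verified, Theorem~\ref{thm:cat-main} applies: it produces a bounded $t$-structure on $\Mod(B)^\omega$ for which $F$ is $t$-exact, which is precisely the statement that $\Mod(B)^\omega$ is bounded and regular with $(-)\otimes_A B$ being $t$-exact, and it gives that $K(A) = K(\Mod(A)^\omega) \to K(\Mod(B)^\omega) = K(B)$ is an equivalence (with $K_{-1}$ vanishing as a bonus).

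**The main obstacle.** I expect the delicate point to be the tor-amplitude bookkeeping in verifying (B$'$): one needs the relative tensor product $c \otimes_A \cof(f)$ of a discrete module with a $[-\infty,-1]$ right module to land in degrees $\leq -1$. This is a standard consequence of the definition of tor amplitude — resolving $\cof(f)$ (or rather using that $\cof(f)\otimes_A N$ is $\leq -1$ for $N$ discrete, then taking a resolution of $c$) — but it requires being careful about left vs.\ right module structures and about which side the tensor is taken on. Secondarily, matching the ind-completion $t$-structure with the classical one on $\Mod(A)$ needs the regularity hypothesis and a small argument, but this is routine given Example~\ref{lem:discreteregular} and the discussion of Example~\ref{exm:ringprestable}.
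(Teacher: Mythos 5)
Your proof is correct and follows essentially the same route as the paper: apply Theorem~\ref{thm:cat-main} to the base-change functor, observe (A) is automatic since $B=F(A)$ generates, and verify (B) via the reformulation (B$'$) of Lemma~\ref{lem:condbprime} by identifying $\cof(c \to F_*F^*(c))$ with $\cof(f)\otimes_A c$ and invoking the tor-amplitude bound. The paper's proof is a terser version of exactly this; the one thing to tidy in your writeup is the left/right bookkeeping you yourself flag — with $\Mod(A)$ meaning left modules, the functor is $B\otimes_A(-)$ and the relevant cofiber is $\cof(f)\otimes_A c$ (right module tensored with left module), which is where the hypothesis that $\cof(f)$ has right tor amplitude $\leq -1$ enters.
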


\begin{proof}
  We apply \Cref{thm:cat-main} to the base-change functor
  \[ B \otimes_{A} - : \Mod(A)^{\omega} \to \Mod(B)^{\omega}. \]  
  This functor is quasi-affine since $A$ is sent to $B$ which is a generator.    
  Condition (B$'$) of \Cref{lem:condbprime} asks that for every $N \in \Mod_{A}^{\heart}$ the $A$-module $\cof(f)\otimes_{A}N$ be coconnected.
  This is equivalent to the given tor amplitude bound on $\cof(f)$.
\end{proof}

\begin{proof}[Proof (of \Cref{thm:opener}).]
  We apply \Cref{prop:relativeopener} to the connective cover map $f:\pi_0R \to R$.
  From \Cref{lem:discreteregular} we know that $\pi_0R$ is left regular coherent iff
  $\Mod(\pi_0R)^{\omega}$ is regular.
  Boundedness is automatic.
  Since $\tau_{<0}R \simeq \cof(f)$,
  the tor amplitude bounds in \Cref{prop:relativeopener} and \Cref{thm:opener} match up.
\end{proof}

\begin{rmk}\label{rmk:ringoptimality}
  Amplifying \Cref{rmk:optimality}, we note that the conditions of \Cref{thm:opener} are actually \emph{equivalent} to (A) and (B$'$) for the connective cover map, implying a converse to this theorem.
  \tqed
\end{rmk}

\begin{rmk}
  The reader might wonder when the the abelian categories $\CC^\heartsuit$ and $\DD^\heartsuit$ appearing in \Cref{thm:cat-main} are equivalent. We remark that this will be the case as soon as $F$ satisfies:
  \begin{itemize}
  \item[(C)] for every $c \in \CC^{\heart}$,
    the cofiber of the unit map $c \to F_*F^*(c)$ is $\leq -2$ in the $t$-structure on $\Ind(\CC)$\footnote{See \cite[Proposition 7.2]{ncgstuff} for details.}.
  \end{itemize}  
  \tqed
\end{rmk}

\subsection{The Noetherian case}\

In situation of \Cref{thm:cat-main} if we further assume that the heart of $\CC$ is Noetherian, then we can draw stronger conclusions about $K$-theory and the induced $t$-structure on $\DD$.

\begin{lem} \label{lem:noetherian-transfer}
  In situation of \Cref{thm:cat-main}, if $\CC^\heartsuit$ is Noetherian, then the heart of the induced $t$-structure on $\DD$ is Noetherian as well.
\end{lem}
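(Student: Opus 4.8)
The plan is to exploit the filtration structure on $\DD^\heart$ provided by \Cref{prop:small-t}: every object $d \in \DD^\heart$ admits a finite filtration whose associated graded pieces lie in $\CC^\heart \subseteq \DD^\heart$. Since being Noetherian is a property of the lattice of subobjects, the strategy is to show that the ascending chain condition (ACC) for subobjects of $d$ can be deduced from the ACC for objects of $\CC^\heart$. First I would reduce to showing that $\CC^\heart$ is closed under subobjects in $\DD^\heart$ — but this is exactly what was checked in the final paragraph of the proof of \Cref{thm:cat-main}: if $d' \in \DD^\heart$ is a subobject of $c \in \CC^\heart$ then $d' \in \CC^\heart$ (and moreover $\coker(d' \to c) \in \CC^\heart$). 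So $\CC^\heart$ is a Serre subcategory of $\DD^\heart$ closed under subobjects, and in particular a Noetherian object of $\CC^\heart$ remains Noetherian when regarded in $\DD^\heart$.

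Next I would run an induction on the length $n$ of the filtration on $d$. If $n = 1$ then $d \in \CC^\heart$ and we are done by hypothesis. For the inductive step, write the filtration as $0 \to d'' \to d \to d' \to 0$ where $d''$ lies in the image of $F|_{\CC^\heart}$ (the bottom filtered piece) and $d'$ carries a filtration of length $n-1$, hence is Noetherian by induction. Both $d''$ and $d'$ are then Noetherian objects of $\DD^\heart$. The standard fact that in an abelian category an extension of two Noetherian objects is Noetherian — given an ascending chain of subobjects of $d$, its intersections with $d''$ stabilize and its images in $d'$ stabilize, so the chain itself stabilizes — now finishes the argument that $d$ is Noetherian.

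The only real content beyond bookkeeping is the base case, namely that the ACC for $\CC^\heart$ transfers to subobjects of $F(c)$ taken inside $\DD^\heart$, and this is precisely the closure of $\CC^\heart$ under subobjects established in the proof of \Cref{thm:cat-main}; so I expect no genuine obstacle here. One point to be careful about: to invoke "extension of Noetherian is Noetherian" one needs to know that the subobject and quotient objects appearing are computed in $\DD^\heart$ compatibly with the filtration, which is automatic since $\CC^\heart \hookrightarrow \DD^\heart$ is exact. Thus the heart of the induced $t$-structure on $\DD$ — every object of which has a finite $\CC^\heart$-filtration by \Cref{prop:small-t} — is Noetherian.
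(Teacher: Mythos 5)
Your proof is correct and takes essentially the same approach as the paper: reduce via \Cref{prop:small-t} and closure of Noetherian objects under extensions to the case of objects $f(c)$, then use the closure of $\CC^\heart$ under subobjects (established in the proof of \Cref{thm:cat-main}) to see that the subobject lattice of $f(c)$ in $\DD^\heart$ agrees with that of $c$ in $\CC^\heart$. The paper phrases the extension-closure step as a one-liner where you spell it out as an induction on filtration length, but the argument is the same.
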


\begin{proof}
  We would like to show that every $d \in \DD^\heart$ is Noetherian.
  By \Cref{prop:small-t}, $d$ has a finite filtration with associated graded in $F(\CC^{\heart})$, so since Noetherian objects are closed under extensions, it suffices to show that $f(c)$ is Noetherian for each $c \in \CC^{\heart}$.
  As argued in the proof of \Cref{thm:cat-main},
  $f$ is fully faithful with image closed under passing to subobjects. This implies that the lattice of subobjects of $f(c)$ agrees with that of $c$, so $f(c)$ is Noetherian since $c$ is.
\end{proof}

As a consequence of the vanishing theorems of \cite{schlichting2006negative} and \cite{antieau2018ktheoretic} the negative $K$-groups of categories with a bounded $t$-structure with Noetherian heart vanish. In our setting this lets us upgrade the equivalence of $K$-theory spectra to one of non-connective $K$-theory spectra
\[ K^{\mathrm{nc}}(\CC) \xrightarrow{\simeq} K^{\mathrm{nc}}(\DD). \]

Next we examine the interpretation of condition (B) as ``unipotence'' more closely.
At the moment we know that
\begin{enumerate}
\item $f: \CC^\heart \to \DD^\heart$ is fully faithful.
\item Each $d \in \DD^\heart$ has a finite filtration with associated graded in the image of $f$.
\item Every subobject of $f(c)$ comes from a subobject of $c$.
\end{enumerate}

Note that the finite filtrations are not guaranteed to be functorial\footnote{Quillen's devissage, doesn't require a functorial filtration, but merely an object-wise one.}, but when $\DD$ is Noetherian, functoriality comes for free in the form of the \textit{socle filtration}:

\begin{cnstr}
  At the level of $\Ind$-categories the functor $f$ has right adjoint given by $g \coloneqq \tau_{\geq 0}G(-)$.
  Since $f$ is a fully faithful left adjoint it exhibits $\Ind(\CC)^\heart$ as a coreflective subcategory of $\Ind(\DD)^\heart$. We let $\mathrm{soc}_0(M)$ denote $fg(M)$, which sits as a subobject of $M$ via the counit map.
   
  We define $\mathrm{soc}_n(-)$ inductively via the pullback
  \begin{center}
    \begin{tikzcd}
      & \pullback\mathrm{soc}_n \ar[d, hook] \ar[r, two heads] & \mathrm{soc}_0(\mathrm{Id}/\mathrm{soc}_{n-1}) \ar[d, hook] \\
      \mathrm{soc}_{n-1} \ar[r, hook] & \mathrm{Id} \ar[r, two heads] & \mathrm{Id}/\mathrm{soc}_{n-1}.
    \end{tikzcd}
  \end{center}
  The key property of the socle filtration is that $\mathrm{soc}_n/\mathrm{soc}_{n-1}$ is in the image of $f$ for every $n$.
  Since the maps
  $ \mathrm{Id}/\mathrm{soc}_{n-1} \to \mathrm{Id}/\mathrm{soc}_{n} $
  become zero after applying $g$ and since $g = \tau_{\geq 0}G$ and $G$ detects coconnectivity we learn that
  \[\colim_n \mathrm{soc}_n \to \mathrm{Id} \]
  is an equivalence, i.e. the socle filtration is exhaustive.

  Now, using our Noetherian-ness hypothesis we know that arbitrary subobjects of compact objects are compact in $\Ind(\DD)^\heart$, therefore the socle filtration restricts to a socle filtration on $\DD^\heart$. 
  \tqed
\end{cnstr}

\begin{rmk} 
  The construction of a bounded $t$-structure together with socle filtrations can be interpreted as a generalization of \cite[Theorem 8.1]{keller2011weight} where a similar result is proven for dg-algebras with strong finiteness assumptions.
  \tqed
\end{rmk}

It is important to note that outside of the Noetherian setting the socle filtration need not restrict to compact objects.

\begin{exm}
  Let $\CC$ denote the category of pairs $(V_0,V_1)$ where
  $V_0$ is a $k[x_1,\dots]$-module and
  $V_1$ is a $k$-module.
  Since the infinite dimensional affine space is regular coherent, $\CC$ has a bounded $t$-structure.
  Let $\DD$ denote the category of triples
  $(V_0,\ V_1,\ V_0 \otimes_{k[x_1,\dots]} k \to V_1)$.

  The natural functor $\CC \to \DD$ which uses the zero map is fully faithful on the heart, therefore the hypotheses of \Cref{thm:cat-main} are satisfied. On the other hand, the socle of
  \[ (k[x_1,\dots],\ k,\ k[x_1,\dots] \twoheadrightarrow k) \]
  is $(I, k)$ and the augmentation ideal of $k[x_1,\dots]$ is not compact.
  \tqed
\end{exm}

\subsection{Negative \texorpdfstring{$K$}{K}-theory}\label{stablecoh}\

It would be desirable to extend \Cref{thm:cat-main} to negative $K$-theory, however both the theorem of the heart and Quillen's devissage only apply to connective $K$-theory in their current form. For that reason we ask the following question (which we hope has a positive answer):

\begin{qst} \label{qst:neg-heart}
  Do the theorem of the heart and devissage hold for negative $K$-theory?
\end{qst}

As discussed above, if $\CC^{\heartsuit}$ is Noetherian, then $\DD^{\heartsuit}$ is Noetherian as well and the negative $K$-theory of both categories vanishes by \cite{antieau2018ktheoretic}.
This might suggest that one should approach this question by proving a vanishing statement for negative $K$-theory.
However, the example from \cite{neeman} shows that in general regularity does not imply the vanishing of negative $K$-groups.

In order to probe question of this type more closely we examine the relation between \emph{stable coherence} and the vanishing of negative $K$-theory in \cite[Section 3.2]{ncgstuff}\footnote{see also \cite[Section 3.5]{antieau2018ktheoretic} for a similar discussion}. We reproduce the statements proved therein for the convenience of the reader interested in thinking about \Cref{qst:neg-heart}.

\begin{dfn}
  Given a category $\CC$ with bounded $t$-structure we say that $\CC$ is \emph{$\A^n$-coherent} if the finitely presented $\Z[t_1,\dots,t_n]$-modules in $\Ind(\CC^{\heartsuit})$ form an abelian category\footnote{If $\CC$ is the category of perfect $R$-modules for $R$ a discrete ring, then this is equivalent to asking that $R[t_1,\dots,t_n]$ be left coherent.}. If $\CC$ is $\A^n$-coherent for all $n$, then we say it is \emph{stably coherent}. \todo{Is this equivalent to what we do elsewhere?}
  \tqed
\end{dfn}

\begin{exm}
  In \cite[Example 7.3.13]{glaz2006commutative}, it is shown that an infinite product of copies of the ring $\Q[\![x,y]\!]$ is regular coherent, but this ring is not $\A^1$-coherent (demonstrating that this is a non-trivial condition on a regular coherent ring).
  \tqed
\end{exm}

The following lemma uses the vanishing results of \cite{antieau2018ktheoretic}.

\begin{lem}[{\cite[Corollary 3.14, Lemma 3.17]{ncgstuff}}]
  If $\CC$ is regular and $\A^n$-coherent, then the first $n+1$ negative $K$-groups of $\CC$ vanish.
\end{lem}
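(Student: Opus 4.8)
The plan is an induction on $n$ that upgrades the vanishing of $K^{\mathrm{nc}}_{-1}$ for categories with a bounded $t$-structure (\cite{schlichting2006negative,antieau2018ktheoretic}) one negative degree at a time, using the fundamental theorem of non-connective $K$-theory together with the fact that the relevant degree-$0$ polynomial extensions of $\CC$ remain regular and lose exactly one order of coherence. The base case $n=0$ is immediate: a regular (hence bounded) $\CC$ carries a bounded $t$-structure, so $K_{-1}(\CC)=0$ by \cite{antieau2018ktheoretic}.

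The structural input I would establish first is the following permanence statement for $n\geq 1$: if $\CC$ is regular and $\A^n$-coherent, then each of the degree-$0$ polynomial and Laurent extensions $\CC[t]:=\CC\otimes\Mod(\Ss[t])^\omega$ (note that $\CC[t^{-1}]$ is a renamed copy of $\CC[t]$) and $\CC[t^{\pm 1}]:=\CC\otimes\Mod(\Ss[t^{\pm1}])^\omega$ is again regular and, moreover, is $\A^{n-1}$-coherent. Regularity is the real point: unlike the positive-degree case of \Cref{prop:polyringreg}, a degree-$0$ variable can destroy regularity, and it is precisely $\A^1$-coherence of $\CC$ that prevents this. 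For $\CC=\Mod(R)^\omega$ with $R$ discrete this is the classical fact that $R$ left regular coherent together with $R[t]$ coherent implies $R[t]$ is left regular coherent, via \Cref{lem:discreteregular}; in general one checks directly that the $t$-structure on $\Ind(\CC[t])$ restricts to compact objects and identifies its heart with the finitely presented $\Z[t]$-modules in $\Ind(\CC^\heart)$. Granting that identification, $\A^{n-1}$-coherence of $\CC[t]$ is formal: a finitely presented $\Z[s_1,\dots,s_{n-1}]$-module in $\Ind(\CC[t]^\heart)$ is the same as a finitely presented module over $\Z[t,s_1,\dots,s_{n-1}]\cong\Z[t_1,\dots,t_n]$ in $\Ind(\CC^\heart)$, and these form an abelian category by hypothesis; the Laurent case follows by inverting $t$, which preserves abelianness.

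With this in hand the inductive step runs as follows. Fix $n\geq 1$, assume the lemma for $n-1$, and let $\CC$ be regular and $\A^n$-coherent. By the permanence statement, $\CC[t]$, $\CC[t^{-1}]$ and $\CC[t^{\pm1}]$ are regular and $\A^{n-1}$-coherent, so the inductive hypothesis yields
\[ K^{\mathrm{nc}}_{-j}(\CC[t])=K^{\mathrm{nc}}_{-j}(\CC[t^{-1}])=K^{\mathrm{nc}}_{-j}(\CC[t^{\pm1}])=0\qquad(1\leq j\leq n). \]
The fundamental theorem of non-connective $K$-theory supplies, for every $m\in\Z$, a short exact sequence
\[ 0\to K^{\mathrm{nc}}_{m}(\CC)\to K^{\mathrm{nc}}_{m}(\CC[t])\oplus K^{\mathrm{nc}}_{m}(\CC[t^{-1}])\to K^{\mathrm{nc}}_{m}(\CC[t^{\pm1}])\to K^{\mathrm{nc}}_{m-1}(\CC)\to 0 \]
(here the connecting long exact sequence degenerates into short exact sequences because the augmentation $\CC[t]\to\CC$ exhibits $K^{\mathrm{nc}}(\CC)$ as a retract of $K^{\mathrm{nc}}(\CC[t])$). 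Taking $m=-j$ for $j=1,\dots,n$, the two middle terms vanish, so exactness forces both $K^{\mathrm{nc}}_{-j}(\CC)=0$ and $K^{\mathrm{nc}}_{-(j+1)}(\CC)=0$. Letting $j$ range over $1,\dots,n$ we obtain $K^{\mathrm{nc}}_{-i}(\CC)=0$ for all $1\leq i\leq n+1$, which is the claim for $n$.

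The main obstacle is confined to the permanence statement of the second paragraph, namely that a degree-$0$ polynomial (or Laurent) extension of a regular, $\A^n$-coherent category is again regular and $\A^{n-1}$-coherent. This is the only place where one must actually work with the $t$-structure on $\Ind(\CC)$ and with the categories of $\Z[t_1,\dots,t_n]$-modules inside $\Ind(\CC^\heart)$; granted it, the rest is a purely formal induction feeding the vanishing theorem of \cite{antieau2018ktheoretic} into the Bass delooping sequence.
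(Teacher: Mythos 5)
The paper does not prove this lemma---it is quoted from \cite{ncgstuff} (Corollary 3.14, Lemma 3.17) without proof, precisely ``for the convenience of the reader,'' so I cannot directly compare your proof against the paper's. That said, your approach---Bass delooping feeding the Antieau--Gepner--Heller vanishing theorem, with an induction that trades one $\A^1$-direction of coherence for one extra degree of vanishing---is exactly the natural strategy and is almost certainly what the source does. The formal skeleton is sound: the base case $n=0$ is the bounded-heart vanishing of $K_{-1}$, the long exact Bass sequence does degenerate into the short exact sequences you wrote because of the retraction $\CC[t]\to\CC$, and the bookkeeping of which degrees vanish is correct.

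The real issue is that the entire weight of the argument rests on the permanence statement, which you flag but do not prove: that $\CC$ regular and $\A^n$-coherent implies $\CC[t]$, $\CC[t^{-1}]$, $\CC[t^{\pm1}]$ are regular and $\A^{n-1}$-coherent. This is not a routine verification. Two separate things must be established: (i) the $t$-structure on $\Ind(\CC[t])$ restricts to compact objects, i.e.\ $\CC[t]$ is regular, and (ii) the heart $\CC[t]^\heart$ is precisely the finitely presented $\Z[t]$-modules in $\Ind(\CC^\heart)$. Point (ii) requires an argument even once (i) is granted (the compact objects of the heart of a localizing subcategory are not automatically the finitely presented objects), and point (i) is the substantive step: even in the discrete case, ``$R$ left regular coherent and $R[t]$ left coherent implies $R[t]$ left regular coherent'' is a nontrivial theorem, not an observation, and the categorical generalization to a prestable $\CC$ cannot simply be asserted. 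Similarly, deducing regularity and coherence for $\CC[t^{\pm1}]$ ``by inverting $t$'' glosses over checking that localization interacts correctly with compactness in the heart. Finally, a small point: ``regular (hence bounded)'' is not a valid inference under \Cref{dfn:ncg-defs}---regularity and boundedness are independent conditions there, and one must instead invoke the fact that $\A^n$-coherence is only defined in the presence of a bounded $t$-structure. None of these are likely to be wrong, but as written the proof has not been given: the induction is a wrapper around a lemma that is exactly as hard as the statement being proved.
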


\begin{prop}[{\cite[Proposition 3.18]{ncgstuff}}]
  If we are given a functor
  $ F : \CC \to \DD \in \Cat^{\perf}$ and
  a bounded $\A^n$-coherent $t$-structure on $\CC$
  such that the conditions of \Cref{thm:cat-main} are satisfied,
  then the induced $t$-structure on $\DD$ is $\A^n$-coherent as well.
\end{prop}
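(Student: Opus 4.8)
The plan is to reduce the statement about $\DD$ being $\A^n$-coherent to the $\A^n$-coherence of $\CC$, which we are given, by comparing the two categories $\Ind(\CC^\heartsuit)$ and $\Ind(\DD^\heartsuit)$ after tensoring with $\Z[t_1,\dots,t_n]$. The key observation is that everything in sight is compatible with the base-change along $\Ss \to \Z[t_1,\dots,t_n]$, or more precisely with forming finitely presented modules over this ring, so I want to propagate the structural relationship between $\CC^\heartsuit$ and $\DD^\heartsuit$ obtained in the proof of \Cref{thm:cat-main} through this operation.

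First I would recall precisely what the proof of \Cref{thm:cat-main} (specifically \Cref{prop:small-t} and the devissage step) gives us: $F$ induces a $t$-exact functor $f : \CC^\heartsuit \to \DD^\heartsuit$ which is fully faithful, whose image is closed under passage to subobjects, and such that every object of $\DD^\heartsuit$ admits a finite filtration with associated graded in the image of $f$. In other words, $\DD^\heartsuit$ is obtained from $\CC^\heartsuit$ by the same kind of ``unipotent extension'' procedure that $\DD$ is obtained from $\CC$. Next I would pass to $\Ind$-categories and polynomial rings: let $R = \Z[t_1,\dots,t_n]$, and consider the categories $\Mod_R(\Ind(\CC^\heartsuit))$ and $\Mod_R(\Ind(\DD^\heartsuit))$ (equivalently, $R$-modules in these presentable abelian categories), together with their full subcategories of finitely presented objects. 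The induced functor $f_R$ is still fully faithful and exact, its image is still closed under subobjects, and — crucially — a finitely presented $R$-module in $\Ind(\DD^\heartsuit)$ still admits a finite filtration by $R$-submodules whose associated graded pieces are finitely presented $R$-modules lying in the image of $f_R$; this last point follows because the filtration of an underlying object of $\DD^\heartsuit$ by $\CC^\heartsuit$-pieces can be upgraded to an $R$-linear one by taking $R$-submodules generated by the original pieces, using Noetherian-type finiteness of $R$ over $\Ss$ (finite presentation is inherited because $R$ is coherent over itself and the ambient abelian categories are nice).

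With that in place, the argument is a devissage-style bootstrap: to show that finitely presented $R$-modules in $\Ind(\DD^\heartsuit)$ form an abelian category, it suffices (since $R$-modules in an abelian category always form an abelian category, the only issue is whether finite presentation is preserved under kernels) to check that the kernel of a map between finitely presented $R$-modules in $\Ind(\DD^\heartsuit)$ is again finitely presented. Using the finite filtration with associated graded in the image of $f_R$, one runs exactly the spectral-sequence / successive-extension argument from the proof of \Cref{prop:small-t}: the image of $f_R$ is closed under $R$-linear kernels and cokernels among finitely presented objects precisely because $\CC^\heartsuit$ is assumed $\A^n$-coherent (so finitely presented $R$-modules in $\Ind(\CC^\heartsuit)$ are an abelian category) and $f_R$ is fully faithful and exact. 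Chasing through the filtration then shows the kernel in $\Ind(\DD^\heartsuit)$ is built from finitely presented $R$-modules in the image of $f_R$ by finitely many extensions, hence is itself finitely presented.

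The main obstacle I anticipate is the bookkeeping around finite presentation: verifying that the finite filtration of an object of $\DD^\heartsuit$ by pieces in $\CC^\heartsuit$ can be chosen $R$-linearly with finitely presented graded pieces, and that finite presentation of $R$-modules is stable under the relevant kernels and cokernels inside the larger category. This is essentially the same subtlety that makes $\A^n$-coherence (as opposed to mere coherence) a genuine condition, and it is where one must be careful not to leave the world of finitely presented objects. Everything else is a direct transcription of the proof of \Cref{thm:cat-main} with ``$\CC$'' replaced by ``finitely presented $R$-modules in $\Ind(\CC^\heartsuit)$'' and ``$\DD$'' by the analogous category over $\DD^\heartsuit$. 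I would expect the full proof in \cite{ncgstuff} to phrase this cleanly in terms of the categories of finitely presented $R$-modules directly, avoiding any explicit spectral sequence.
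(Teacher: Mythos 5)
The paper does not actually reprove this proposition: it is reproduced verbatim as a citation to \cite[Proposition 3.18]{ncgstuff}, so there is no in-text proof to compare against. Judging your argument on its own merits, the overall plan is right — transport the devissage structure of \Cref{prop:small-t} into the setting of $R = \Z[t_1,\dots,t_n]$-modules and run the same closure argument, using $\A^n$-coherence of $\CC$ exactly where the original proof used the abelian-ness of $\CC^\heartsuit$ — and I would expect the proof in \cite{ncgstuff} to be of this shape.

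The step that does not work as written is the claim that a finitely presented $R$-module in $\Ind(\DD^\heartsuit)$ ``still admits a finite filtration by $R$-submodules with graded pieces f.p.\ in the image of $f_R$,'' justified by ``upgrading the filtration of an underlying object of $\DD^\heartsuit$.'' A finitely presented $R$-module in $\Ind(\DD^\heartsuit)$ has no underlying object in $\DD^\heartsuit$ — after restricting scalars to $\Z$ it is typically not compact in $\Ind(\DD^\heartsuit)$ — so there is no filtration to upgrade. Worse, asserting this filtration for \emph{all} f.p.\ $R$-modules up front is essentially the conclusion you are trying to prove. The fix is to reorder the bootstrap: (i) for a \emph{free} module $R \otimes d$ with $d \in \DD^\heartsuit$, the $R$-linear filtration is immediate — take a filtration $d_\bullet$ of $d$ from \Cref{prop:small-t} and tensor with $R$, giving graded pieces $R \otimes (d_i/d_{i-1})$ which are free, hence f.p., $R$-modules in $\Ind(\CC^\heartsuit)$; (ii) show the class of $R$-modules admitting such a filtration is closed under kernels, cokernels and extensions, which is the spectral-sequence argument and is exactly where $\A^n$-coherence of $\CC$ enters (to keep the graded pieces f.p.\ as differentials are run); (iii) conclude that every f.p.\ $R$-module in $\Ind(\DD^\heartsuit)$ lies in this class (it is a cokernel of a map of frees), and conversely that everything in this class is f.p.\ (finite extensions of f.p.\ are f.p.). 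Only then does ``kernels of maps of f.p.\ modules are f.p.'' follow. Separately, ``taking $R$-submodules generated by the original pieces'' is not what you want — the natural $R$-linear filtration on $R\otimes d$ is simply $R\otimes d_\bullet$, or equivalently the socle filtration, which is automatically $R$-linear by naturality of the coreflection; the ``submodule generated by'' construction is both unnecessary and not obviously well-behaved.
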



\section{$K$-theory of pushouts}
\label{sec:applications}

In this section we prove \Cref{thm:pushouts} (a corollary of which is \Cref{thm:discrete-pushouts} from the introduction) which says that $K$-theory preserves pushouts of (well-behaved) regular prestable categories.
This theorem arises from the examining the interaction of our main theorem with the Land--Tamme $\odot$-product.
In fact, the idea that a result like \Cref{thm:opener} should be true was suggested to the authors by Markus Land and Georg Tamme with the intention of using such a result to compute the $K$-theory of pushouts.

\subsection{A review on the \texorpdfstring{$\odot$}{odot}-product}\ 

The Land--Tamme $\odot$-product is a relatively new operation on $\E_1$-algebras (and categories more generally) first introduced in \cite{Land_2019}, with generalizations appearing in \cite{bachmann2020categorical} and the forthcoming \cite{LTformula2021}. Here we roughly follow \cite{LTformula2021} in our formulation of this operation. 

\begin{cnstr}\label{cnstr:odot}
  Given a pair of categories $\BB$ and $\CC$ in $\Cat^{\perf}$ and an arrow $f \in \Fun^L(\Ind(\CC), \Ind(\BB)$ we can form the oplax limit of $f$, which we denote $\BB \vec{\times}_f \CC \in \Cat^{\perf}$.
  This is the category of triples $(b,c,r)$, where $b \in \BB$, $c \in \CC$, and $r: b \to f(c)$ is a map.
  For our purposes, the key observation about $\BB \vec{\times}_f \CC$ is that the forgetful map
  $\BB \vec{\times}_f \CC \to \BB \times \CC$
  induces an equivalence at the level of nc motives
  \[ \uloc(\BB \vec{\times}_f \CC) \cong \uloc(\BB \times \CC) \cong \uloc(\BB) \oplus \uloc(\CC). \]
  Adding another layer,
  associated to each square
  \begin{center}
    \begin{tikzcd}
      \AA \ar[r] \ar[d] & \CC \ar[d, "f"]  \\
      \BB \ar[r] \ar[ur, Rightarrow] & \Ind(\BB).
    \end{tikzcd}
  \end{center}
  we have an induced map $\AA \to \BB \vec{\times}_f \CC$ in $\Cat^{\perf}$
  and we define the Land--Tamme $\odot$-product $\BB \odot^{f}_{\AA} \CC$ to be the cofiber of this map.
  This cofiber sequence in $\Cat^{\perf}$ provides a pushout in nc motives
  \begin{center}
    \begin{tikzcd}
      \uloc(\im \AA)\ar[r]\ar[d] & \uloc(\CC) \ar[d]\\
      \ar[r] \uloc(\BB) & \uloc(\BB \odot_{\AA}^{f} \CC)\pushout
    \end{tikzcd}
  \end{center}
  where $\im\AA$ denotes the image of $\AA$ inside $\BB \vec{\times}_f \CC$.
  \tqed
\end{cnstr}

\begin{rmk}
  If we specialize to the case where $\AA$, $\BB$ and $\CC$ are module categories of $\E_1$-algebras $A$, $B$ and $C$, then we can move down a categorical level:
  \begin{enumerate}
  \item The category $\Fun^L(\Mod(C), \Mod(B))$ can be identified with $\Mod(B \otimes C^{\op})$,
    meaning the bonding map is just a choice of ($B$, $C$)-bimodule $M$.

  \item $\im\AA$ is generated by the image of $A$, meaning $\im(\AA) \cong \Mod(\im A)^{\omega}$ where $\im A$ is the endomorphism algebra of the image of $A$.
  \item If the functors $\AA \to \BB$ and $\AA \to \CC$ came from $\E_1$-algebra maps $A \to B$ and $A \to C$, then $\BB \odot_{\AA}^{f} \CC$ is generated by the image of $B$ (which is equivalent to the image of $C$). We let $B \odot_{A}^{M} C$ denote the ring of endomorphisms of this object where $M$ is the bimodule used as the bonding map.
  \end{enumerate}
  \tqed  
\end{rmk}

Speaking practically, the fundamental difficulty in working with the $\odot$-product lies in identifying the categories $\im (\AA)$ and $\BB \odot_{\AA}^{f} \CC$.
A fundamental insight of Land and Tamme is that in many cases of interest these categories are surprisingly computationally accessible.

\begin{exm}\label{exm:LT-old}
  In \cite{Land_2019}, where the $\odot$-product was introduced,
  the following example of \Cref{cnstr:odot} is analyzed.
  Suppose we are given a pullback square of $\E_1$-algebras
  \begin{center}
    \begin{tikzcd}
      A \ar[r] \ar[d] \pullback & C \ar[d] \\
      B \ar[r] & D.
    \end{tikzcd}
  \end{center}
  Using $D$ as our $(B,C)$-bimodule,
  $\Mod(A)$ for $\AA$ and 
  the map $ B \to D $ of ($B$, $A$)-bimodules for the natural transformation, we can construct a $\odot$-product $  B \odot^{D}_{A} C $.
  In this situation they prove that $\im(A) \cong A$ and
  the spectrum underlying $B \odot^{D}_{A} C$ is equivalent to $B \otimes_A C$ as a $(B,C)$-bimodule. Furthermore, they show in \cite[Proposition 1.13]{Land_2019} that the underlying $C$-bimodule of $B \odot^{D}_{A} C$ is the cofiber of the map $I\otimes_AC\to C$, where $I$ is the fiber of $C \to D$.
  \tqed
\end{exm}

In the forthcoming \cite{LTformula2021} another, somewhat dual, situation is analyzed.

\begin{thm}[\cite{LTformula2021}]\label{thm:catpushoutLT}
  Given a span $\BB \xleftarrow{b} \AA \xrightarrow{c} \CC$ in $\Cat^{\perf}$
  there is a square
  \begin{center}
    \begin{tikzcd}[sep=huge]
      \AA \ar[r, "c"] \ar[d, "b"] & \CC \ar[d, "b^*c_*"]  \\
      \BB \ar[r] \ar[ur, Rightarrow, "b^* \eta_c"'] & \Ind(\BB)
    \end{tikzcd}
  \end{center}
  and an equivalence of the associated $\odot$-product with the pushout of the span,
  \[ \BB \odot^{b^*c_*}_{\AA} \CC \simeq \BB \coprod_{\AA} \CC. \]
\end{thm}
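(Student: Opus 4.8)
\textbf{Proof proposal for \Cref{thm:catpushoutLT}.}

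The plan is to identify both sides with the same colimit in $\Cat^{\perf}$ by working one categorical level up, inside presentable stable categories. First I would recall the defining property of the $\odot$-product from \Cref{cnstr:odot}: $\BB \odot^{b^*c_*}_{\AA} \CC$ fits into a cofiber sequence $\im\AA \to \BB \vec{\times}_{b^*c_*} \CC \to \BB \odot^{b^*c_*}_{\AA} \CC$ in $\Cat^{\perf}$, where $\im\AA$ is the image of the map $\AA \to \BB \vec{\times}_{b^*c_*} \CC$ induced by the square $(b, c, b^*\eta_c)$. The key structural input is the oplax limit description: $\BB \vec{\times}_{b^*c_*} \CC$ is the category of triples $(\beta, \gamma, r)$ with $\beta\in\BB$, $\gamma\in\CC$ and $r : \beta \to b^*c_*(\gamma)$ a map in $\Ind(\BB)$. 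Since $c_*$ is the right adjoint of $c^*$ and $b^*$ is a left adjoint, the mapping-space computation $\Map_{\Ind(\BB)}(\beta, b^*c_*\gamma)$ can be massaged via adjunctions; the point is that the object $(\beta,\gamma,r)$ together with its structure map should be reinterpreted as gluing data for a module over the pushout.

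Second I would make the identification of $\im\AA$. The map $\AA \to \BB\vec{\times}_{b^*c_*}\CC$ sends $a \mapsto (b(a), c(a), b^*\eta_c(a))$ where $\eta_c : \mathrm{Id} \to c_*c^*$ is the unit; since $b$ and $c$ need not be fully faithful, $\im\AA$ is the thick subcategory generated by this image, i.e. $\Ind(\im\AA) = \Ind(\AA)/\ker$. I expect that a direct computation — analogous to Land--Tamme's identification in \Cref{exm:LT-old} that $\im A \cong A$ in the pullback case — shows that here the relevant unit/counit maps are such that the cofiber of $\AA \to \BB\vec{\times}_{b^*c_*}\CC$ is precisely the category of modules presented by the pushout algebra, or in the general categorical setting, the Verdier quotient that computes $\BB\coprod_\AA\CC$. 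Concretely, one has the pushout of presentable categories $\Ind(\BB)\coprod_{\Ind(\AA)}\Ind(\CC) = \Ind(\BB)\otimes_{\Ind(\AA)}\Ind(\CC)$ (relative tensor product / Lurie's formula for colimits of module categories), and its compact objects give $\BB\coprod_\AA\CC$; the task is to exhibit a functor from $\BB\vec{\times}_{b^*c_*}\CC$ to this category which kills exactly $\im\AA$ and is essentially surjective onto the quotient.

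Third, having both $\BB \odot^{b^*c_*}_{\AA} \CC$ and $\BB\coprod_\AA\CC$ realized as the cofiber of the same (or canonically equivalent) maps in $\Cat^{\perf}$, the equivalence follows. \textbf{The main obstacle} I anticipate is the second step: showing that the natural comparison functor out of the oplax limit descends to an \emph{equivalence} between the $\odot$-cofiber and the pushout, rather than just a map. This requires controlling the Verdier quotient $\BB\vec{\times}_{b^*c_*}\CC \,/\, \im\AA$ — in particular checking that the comparison functor is fully faithful (a mapping-spectrum computation that unwinds the oplax-limit mapping spectra via the $(c^*, c_*)$ and $(b^*, b_*)$ adjunctions, together with the cofiber sequence defining the quotient) and essentially surjective after idempotent completion (using that $b$ and $c$ generate the two sides of the pushout). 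Since the statement is attributed to the forthcoming \cite{LTformula2021}, I would expect the actual proof there to set up the oplax-limit formalism carefully and deduce this by a universal-property argument rather than the hands-on module-theoretic route sketched here, but the module-category special case via Lurie's relative tensor product is the concrete model I would use to convince myself it is true.
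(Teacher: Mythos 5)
The paper does not prove \Cref{thm:catpushoutLT} at all: it is imported verbatim as a black-box citation to the forthcoming Land--Tamme paper \cite{LTformula2021}, and nothing in \Cref{sec:applications} supplies an argument for it. So there is no ``paper's own proof'' against which to check you; the most I can do is assess the internal soundness of your sketch as a potential proof, which you yourself flag as speculative.

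On that score, two specific claims in your second step are problematic and would need to be repaired before the argument could go through. First, the identification $\Ind(\im\AA) = \Ind(\AA)/\ker$ is not correct in general: $\im\AA$ is the thick subcategory of $\BB\vec{\times}_{b^*c_*}\CC$ generated by the image, and the induced functor $(\AA/\ker)^{\natural} \to \im\AA$ is essentially surjective but need not be full (consider base change along a nontrivial finite field extension, where $\ker$ is zero but the map is not full). Identifying $\im\AA$ is a genuine computation in the Land--Tamme formalism, not a formal consequence of the Verdier factorization, and indeed \Cref{rmk:identify-ima} in the paper treats this identification as extra output requiring an additional argument even in the commutative case. Second, the formula $\Ind(\BB)\coprod_{\Ind(\AA)}\Ind(\CC) \simeq \Ind(\BB)\otimes_{\Ind(\AA)}\Ind(\CC)$ has no meaning without a monoidal structure on $\Ind(\AA)$ and a module structure on the other two legs, which the hypotheses of \Cref{thm:catpushoutLT} do not supply; Lurie's tensor-product description of pushouts is a statement about commutative algebra objects in $\Pr^L$, not about bare presentable categories under a fixed one. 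Your third step then asserts that both sides are ``the cofiber of the same map,'' but this is precisely the content of the theorem; as you write it, the argument assumes what it is meant to prove. The honest summary is that the theorem is being used as a black box here exactly because the necessary oplax-limit bookkeeping is nontrivial, and your sketch correctly identifies the shape of the problem (exhibit the pushout as the Verdier quotient of $\BB\vec{\times}_{b^*c_*}\CC$ by the image of $\AA$) without yet having the mechanism to carry it out.
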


\begin{cor}[\cite{LTformula2021}]\label{thm:LTformula}
  Given a span $B \leftarrow A \rightarrow C$ of $\E_1$-algebras we have an equivalence 
  \[ B \odot^{B \otimes_A C}_{A} C \simeq B \coprod_{A} C \]
  of the $\odot$-product with the pushout of the span in $\E_1$-algebras.
\end{cor}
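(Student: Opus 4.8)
The plan is to deduce Corollary~\ref{thm:LTformula} from the categorical statement \Cref{thm:catpushoutLT} by specializing to module categories and then tracking endomorphism algebras. First I would apply \Cref{thm:catpushoutLT} to the span $\Mod(B)^{\omega} \xleftarrow{b} \Mod(A)^{\omega} \xrightarrow{c} \Mod(C)^{\omega}$, where $b = B\otimes_A -$ and $c = C\otimes_A -$. By the discussion following \Cref{cnstr:odot}, the functor $b^* c_* : \Mod(C) \to \Mod(B)$ corresponds to a $(B,C)$-bimodule, and since $c_*$ is the forgetful functor along $A\to C$ and $b^*$ is extension of scalars, this bimodule is $B\otimes_A C$ with its evident bimodule structure. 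So the left-hand side $\Mod(B)^{\omega} \odot^{b^*c_*}_{\Mod(A)^{\omega}} \Mod(C)^{\omega}$ is precisely the category underlying the ring-level $\odot$-product $B \odot^{B\otimes_A C}_A C$; here I would invoke item (3) of the remark after \Cref{cnstr:odot}, which says this category is generated by the image of $B$ and that $B\odot^{M}_A C$ is by definition the endomorphism ring of that generator.

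The next step is to identify the right-hand side. \Cref{thm:catpushoutLT} gives $\Mod(B)^{\omega} \odot^{b^*c_*}_{\Mod(A)^{\omega}} \Mod(C)^{\omega} \simeq \Mod(B)^{\omega} \coprod_{\Mod(A)^{\omega}} \Mod(C)^{\omega}$, the pushout taken in $\Cat^{\perf}$. I would then use the fact that $\Mod(-)^{\omega} : \Alg_{\E_1} \to \Cat^{\perf}$ (equivalently, taking $\Ind$ and landing in presentable categories with a compact generator) preserves pushouts: the pushout of the span $\Mod(B)^{\omega} \leftarrow \Mod(A)^{\omega} \rightarrow \Mod(C)^{\omega}$ is $\Mod(B\coprod_A C)^{\omega}$, where $B\coprod_A C$ is the pushout in $\E_1$-algebras. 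This is a standard fact about the functor $\Mod(-)$; one way to see it is that $\Mod(-)$ is symmetric monoidal and relative tensor products of module categories compute coproducts of algebras, or one can cite the relevant statement from \cite{HA}. Combining the two identifications, both sides are module categories over $\E_1$-algebras, each equipped with a distinguished compact generator (the image of $B$ on the left, $B\coprod_A C$ itself on the right), and the equivalence of \Cref{thm:catpushoutLT} matches these generators up to equivalence.

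Finally, I would conclude by passing from an equivalence of module categories respecting the distinguished generators to an equivalence of the endomorphism algebras of those generators: if $\Phi : \Mod(R)^{\omega} \xrightarrow{\sim} \Mod(S)^{\omega}$ with $\Phi(R) \simeq S$, then $R \simeq \map_{\Mod(R)^{\omega}}(R,R) \simeq \map_{\Mod(S)^{\omega}}(S,S) \simeq S$ as $\E_1$-algebras, with the ring structure preserved since $\Phi$ is an equivalence of stable categories hence induces an equivalence on endomorphism $\E_1$-rings. Applying this with $R = B\odot^{B\otimes_A C}_A C$ and $S = B\coprod_A C$ yields the desired equivalence $B \odot^{B\otimes_A C}_A C \simeq B \coprod_A C$ of $\E_1$-algebras.

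The main obstacle I anticipate is the bookkeeping in the first step: verifying that the bimodule classifying $b^* c_*$ really is $B\otimes_A C$ with the correct bimodule structure, and that the ``distinguished generator'' on the categorical side matches the object whose endomorphisms define the ring-level $\odot$-product. This is where one has to be careful about which adjoint is which and about the compatibility between \Cref{cnstr:odot} at the categorical level and its ring-level shadow described in the remark that follows it; once that dictionary is in place, the rest is a formal consequence of \Cref{thm:catpushoutLT} together with the fact that $\Mod(-)$ sends pushouts of algebras to pushouts of categories.
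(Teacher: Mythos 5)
Your overall strategy is the natural one: specialize \Cref{thm:catpushoutLT} to module categories, identify the bimodule classifying $b^*c_*$, and read off the endomorphism ring of the distinguished generator. The identification of $b^*c_*$ with $(B\otimes_A C)\otimes_C(-)$ is correct, and the final Morita-theoretic passage from an equivalence of categories matching generators to an equivalence of $\E_1$-rings is fine (modulo the slip that $\End_{\Mod(R)}(R)\simeq R^{\op}$ rather than $R$, which is harmless here).

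The gap is in the pivotal claim that $\Mod(-)^{\omega}\colon\Alg_{\E_1}\to\Cat^{\perf}$ sends $B\coprod_A C$ to $\Mod(B)^{\omega}\coprod_{\Mod(A)^{\omega}}\Mod(C)^{\omega}$, which you assert as ``standard'' and justify by saying that relative tensor products of module categories compute coproducts of algebras. That justification is wrong on two counts. First, the Lurie relative tensor product $\Mod(B)\otimes_{\Mod(A)}\Mod(C)$ recovers $\Mod(B\otimes_A C)$, not $\Mod(B\coprod_A C)$; for $\E_1$-algebras the coproduct (free product) and the relative tensor product are genuinely different, and conflating them is exactly what the $\odot$-product formalism is built to avoid. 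Second, the pushout of a span in $\Cat^{\perf}$ is a colimit in that category and is \emph{not} computed by the monoidal relative tensor product of categories over $\Mod(A)^{\omega}$. The fact you need is nonetheless true, but for a different reason: by Morita theory (HA~\S4.8.4), $\Fun^{\mathrm{ex}}(\Mod(R)^{\omega},\DD)$ is equivalent to the category of objects of $\DD$ equipped with a right $R$-module structure, i.e. compact objects of $\mathrm{RMod}_R(\Ind(\DD))$, and a right $(B\coprod_A C)$-module structure on a fixed object is precisely a right $B$- and a right $C$-structure restricting to the same $A$-action. This identifies the functors $\Fun^{\mathrm{ex}}(-,\DD)$ of both candidate pushouts, naturally in $\DD$ and compatibly with the generator, and only then does the corollary follow from \Cref{thm:catpushoutLT}. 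As written, the proof rests the crucial step on a false explanation and so does not stand on its own.
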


\begin{rmk} \label{rmk:identify-ima}
  In \Cref{thm:LTformula} if we have a span of commutative algebras instead, then
  the base-change equivalence $B \otimes_A C \otimes_C - \cong B \otimes_A -$ allows us to recognize that we are actually in the situation of \Cref{exm:LT-old} for the cospan $B \to B \otimes_A C \leftarrow C$.
  The benefit of making this identification is that we can identify $\im(A)$ as the pullback of this cospan.
  \tqed
\end{rmk}

\begin{lem}\label{lem:LTbasechange}
  The $\odot$-product is compatible with base-change, i.e.
  \[ (\BB \odot^f_{\AA} \CC) \otimes \DD \cong (\BB \otimes \DD)\odot^{f \otimes \DD}_{\AA \otimes \DD}(\CC \otimes \DD). \]
\end{lem}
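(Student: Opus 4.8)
The claim is a base-change compatibility for the Land--Tamme $\odot$-product: given a square as in \Cref{cnstr:odot} producing $\BB \odot^f_{\AA} \CC$, tensoring the entire construction with a fixed $\DD \in \Cat^{\perf}$ yields the $\odot$-product of the tensored square. My plan is to trace through \Cref{cnstr:odot} step by step and check that each piece commutes with $- \otimes \DD$, using that $- \otimes \DD : \Cat^{\perf} \to \Cat^{\perf}$ is a symmetric monoidal functor which, crucially, preserves the relevant colimits (it is a left adjoint on each variable, so it preserves cofiber sequences) and interacts well with $\Ind$-completion.

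\textbf{Key steps.} First, I would reduce to checking that the oplax limit construction commutes with base change: namely that $(\BB \vec{\times}_f \CC) \otimes \DD \simeq (\BB \otimes \DD) \vec{\times}_{f \otimes \DD} (\CC \otimes \DD)$, where $f \otimes \DD$ denotes the induced arrow in $\Fun^L(\Ind(\CC \otimes \DD), \Ind(\BB \otimes \DD))$; here one uses that $\Ind(\CC \otimes \DD) \simeq \Ind(\CC) \otimes \Ind(\DD)$ (tensor product of presentable categories) and that $- \otimes \Ind(\DD)$ preserves the lax/oplax limits defining $\vec{\times}_f$, since these are computed as a limit over $\Delta^1$ of presentable categories and colimit-preserving functors, and $- \otimes \Ind(\DD)$ preserves such limits. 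Equivalently, one identifies the oplax limit as a recollement / gluing and observes gluing data is preserved by the (exact, colimit-preserving in each variable) functor $- \otimes \DD$. Second, I would check that $\im(\AA) \otimes \DD \simeq \im(\AA \otimes \DD)$: the image is the smallest thick subcategory containing the image of $\AA \to \BB \vec{\times}_f \CC$, and since $- \otimes \DD$ is exact and sends generators to generators (it is a left adjoint preserving compact generation when $\DD$ is, and more simply thick subcategories generated by a set of objects are sent to thick subcategories generated by the images), the two agree. Finally, since $\BB \odot^f_{\AA} \CC$ is by definition the cofiber in $\Cat^{\perf}$ of $\AA \to \BB \vec{\times}_f \CC$ restricted through $\im(\AA)$, and $- \otimes \DD$ preserves cofiber sequences (being a left adjoint $\Cat^{\perf} \to \Cat^{\perf}$), combining the first two steps gives the desired equivalence.

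\textbf{Main obstacle.} The technical heart is the first step: verifying that forming the oplax limit $\BB \vec{\times}_f \CC$ commutes with $- \otimes \DD$. One must be careful that this is a colimit-type (or at least a "nice" limit) construction at the level of small stable categories --- the subtlety is that limits in $\Cat^{\perf}$ do not generally commute with tensoring, so one genuinely needs to pass to $\Ind$-categories, use that $-\otimes\Ind(\DD)$ on $\mathrm{Pr}^{L}_{\mathrm{st}}$ preserves the finite limit defining the oplax limit (an oplax limit over $\Delta^1$ is a pullback of copresheaf categories, hence preserved), and then check the resulting equivalence restricts to compact objects correctly, i.e. that $(\BB \vec{\times}_f \CC)^{\text{as compacts in } \Ind} \otimes \DD$ matches up. In practice this is bookkeeping about compact generation and the fact that $\BB \vec\times_f \CC$ sits inside $\Ind(\BB)\times_{\Ind(\BB)^{\Delta^1}}\Ind(\CC)$ as the compact objects; I expect it to be routine given the framework but it is the step requiring the most care, and I would likely cite the relevant compatibility of oplax limits with colimits of presentable categories rather than reprove it.
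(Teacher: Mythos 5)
Your proposal is correct and follows essentially the same approach the paper takes: the paper's proof is a one-liner citing that $-\otimes\DD$ preserves fully faithful functors and localization sequences (referencing Antieau--Gepner--Heller) and commutes with oplax limits of arrows, which are precisely the three ingredients you identify (the oplax limit step, the $\im(\AA)$ step via preservation of fully faithful inclusions/generators, and the cofiber step). You simply expand the oplax-limit compatibility, which the paper asserts without proof, into a more detailed sketch via $\Ind$-completion.
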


\begin{proof}
  This follows from the fact that $- \otimes \BB$ preserves
  fully faithful maps and localization sequences (see for example \cite[Lemma 3.3, Corollary 3.5]{antieau2018ktheoretic})
  and commutes with pullbacks, lax pullbacks and oplax limits of arrows.
\end{proof}

The $\odot$-product allows us to produce examples of equivalences of nc motives which do not arise from equivalences of categories.
We end our recollection by working through a pair of examples which illustrate this flexibility phenomenon.

\begin{dfn}
  Let $\Ss[x_n]$ denote the polynomial algebra on a generator in degree $n$.
  We let $\nil_n$ denote the cofiber
  \[ \nil_n \coloneqq \cof\left( \uloc(\Ss) \to \uloc(\Ss[x_{n}])\right) \]
  and use $\nil$ to mean $\nil_0$.
  \tqed
\end{dfn}

If we think about $\nil$ as a homology theory on nc motives it is the nil-$K$-theory of Bass which measures the failure of $\A^1$-invariance.

\begin{exm} \label{exm:a1-issue}
  Consider the span of commutative algebras $\Ss \leftarrow \Ss[x_n] \to \Ss$.
  Applying \Cref{thm:LTformula} we obtain a pullback of nc motives
  \begin{center}
    \begin{tikzcd}
      \pullback \uloc(\im \Ss[x_{n}]) \ar[d] \ar[r] & \uloc(\Ss) \ar[d] \\      
      \uloc(\Ss) \ar[r] & \uloc(\Ss[x_{n+1}]).
    \end{tikzcd}
  \end{center}
  Using \Cref{rmk:identify-ima} we can identify $\im (\Ss[x_n])$ as $\Ss[\epsilon_n]$ (the exterior algebra on a class in degree $n$) since $\Ss \otimes_{\Ss[x_n]} \Ss \simeq \Ss[\epsilon_{n+1}]$ and the pullback moves the exterior generator down a degree.
  Since the diagram above is diagonally symmetric we have a splitting 
  \[ \uloc( \Ss[\epsilon_n] ) \simeq \o \oplus \Sigma^{-1} \nil_{n+1}. \]
  \tqed
\end{exm}

We learned of next example, which allows us to turn a copy of the coordinate axes in the plane into a polynomial algebra, from Markus Land and Georg Tamme.

\begin{exm}\label{exm:coordinateaxesfreealg}
  Consider the algebra $R \coloneqq \Ss[x_a,x_b]/(x_ax_b)$
  which is built from the pullback square on the left below
  (where $x_a$ is in degree $a$ and $x_b$ is in degree $b$).
  \begin{center}
    \begin{tikzcd}
      \pullback \Ss[x_a,x_b]/(x_ax_b) \ar[r] \ar[d] & \Ss[x_{a}] \ar[d] &[-15pt] & \pullback \uloc(\Ss[x_a,x_b]/(x_ax_b)) \ar[r] \ar[d] & \uloc(\Ss[x_a]) \ar[d] \\
      \Ss[x_b] \ar[r] & \Ss & & \uloc(\Ss[x_b]) \ar[r] & \uloc\left(\Ss[x_a] \coprod_{\Ss[x_a,x_b]} \Ss[x_b] \right)
    \end{tikzcd}
  \end{center}
  Applying \Cref{thm:LTformula} and \Cref{rmk:identify-ima} we then obtain the pullback of nc motives on the right.
  In fact, we can simplify this by exhibiting an equivalence of $\E_1$-algebras
  \[ \Ss[x_a] \coprod_{\Ss[x_a,x_b]} \Ss[x_b] \simeq \Ss[x_{a+b+2}]. \]
  To show this, suppose first that $a,b > 0$. Then using the fact that
  \[ \Spaces_* \xrightarrow{k \otimes \Sigma_+^\infty \Omega -} \Alg_k \]
  is a left adjoint and so preserves pushouts, we reduce to the pushout square
  \begin{center}
    \begin{tikzcd}
      S^{a+1} \times S^{b+1} \ar[r] \ar[d] & S^{a+1} \ar[d] \\
      S^{b+1} \ar[r] & S^{a+b+3}.\pushout
    \end{tikzcd}
  \end{center}
  
  In order to extend this equivalence to the case where $a,b$ are not strictly positive we use a trick.
  First, we lift the pushout above to a pushout in graded rings where $x_a$ and $x_b$ are in grading $1$.
  Since the forgetful functor preserves colimits it will suffice to compute the pushout in the graded setting.
  Next we use the $\E_2$-monoidal shearing functor which suspends by $2n$ in grading $n$ constructed in \cite{rotation} to reduce to the case where $a,b$ are positive.

  In the graded setting the generator $x_{a+b+2}$ is in grading $2$ and as a consequence of the fact that $\Ss[x_a]$ is the free graded algebra on the class $x_a$ in grading $1$ we obtain a factorization
  $ \Ss[x_a] \to \Ss \to \Ss[x_{a+b+2}] $.
  With control over the maps in the square above we now obtain an equivalence of nc motives
  \[ \uloc(\Ss[x_a,x_b]/(x_ax_b)) \simeq \o \oplus \nil_a \oplus \nil_b \oplus \Sigma^{-1} \nil_{a+b+2}. \]
  \tqed
\end{exm}

\subsection{\texorpdfstring{$K$}{K}-theory of pushouts}\ 

There is a sharp contrast between the ideas behind the Land--Tamme $\odot$-product and our main theorem.
The $\odot$-product arises from $2$-categorical maneuvers and essentially operates at the level categories and nc motives.
Meanwhile our main theorem is specific to $K$-theory, exploiting additive but non-exact operations (such as truncation) in an essential way.
The complementary nature of these approaches allows us to combine them to surprising effect.

\begin{thm}\label{thm:pushouts}
  Suppose we are given a span $\BB \xleftarrow{b} \AA \xrightarrow{c} \CC$ in $\Cat^{\perf}$ where
  $\AA$ is equipped with a bounded $t$-structure.
  If we assume that
  \begin{enumerate}
  \item[(D)] the induced functor $\AA^{\heart} \to \BB \vec{\times}_{b^*c_*} \CC$ is fully faithful,
  \end{enumerate}  
  then connective $K$-theory preserves the pushout of the span,
  i.e the diagram below is a pushout square.
  \begin{center}
    \begin{tikzcd}
      K(\AA) \ar[r]\ar[d] & K(\CC)\ar[d]\\
      K(\BB)\ar[r] & \pushout K\left( \BB \coprod_{\AA} \CC \right)
    \end{tikzcd}
  \end{center}
\end{thm}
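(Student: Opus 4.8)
The plan is to combine the Land--Tamme pushout formula (\Cref{thm:catpushoutLT}) with \Cref{thm:cat-main}, applied to the functor $\AA \to \BB \vec{\times}_{b^*c_*} \CC$. First I would recall that by \Cref{thm:catpushoutLT} the span $\BB \xleftarrow{b} \AA \xrightarrow{c} \CC$ gives rise to a cofiber sequence in $\Cat^{\perf}$
\[ \AA \xrightarrow{\ \iota\ } \BB \vec{\times}_{b^*c_*} \CC \longrightarrow \BB \odot^{b^*c_*}_{\AA} \CC \simeq \BB \coprod_{\AA} \CC, \]
where $\iota$ is the functor appearing in hypothesis (D). Since $\uloc$ sends this to a cofiber sequence of nc motives and $\uloc(\BB \vec{\times}_{b^*c_*} \CC) \simeq \uloc(\BB) \oplus \uloc(\CC)$ (via the forgetful map to $\BB \times \CC$), applying connective $K$-theory — which is a direct summand of nc $K$-theory, hence sends this cofiber sequence to a cofiber sequence of connective spectra — we get a pushout square
\begin{center}
  \begin{tikzcd}
    K(\im\AA) \ar[r]\ar[d] & K(\CC)\ar[d]\\
    K(\BB)\ar[r] & K\left( \BB \coprod_{\AA} \CC \right),
  \end{tikzcd}
\end{center}
where $\im\AA \subseteq \BB \vec{\times}_{b^*c_*} \CC$ is the image of $\iota$. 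So the theorem reduces to showing that the natural map $K(\AA) \to K(\im\AA)$ is an equivalence.

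The key step is to verify that the functor $\AA \to \im\AA$ is unipotent, i.e.\ satisfies conditions (A) and (B) of \Cref{thm:cat-main}, so that $K$-equivalence follows. Condition (A) is automatic: $\im\AA$ is by definition generated by the image of $\AA$. For condition (B) — full faithfulness on $\AA^\heart$ — I would argue as follows. The oplax limit $\BB \vec{\times}_{b^*c_*} \CC$ sits in a pullback-like diagram, and the forgetful functor to $\BB \times \CC$ is conservative; moreover the composite $\AA \xrightarrow{\iota} \BB \vec{\times}_{b^*c_*}\CC \to \BB \times \CC$ is $(b, c)$. Hypothesis (D) says precisely that $\iota$ is fully faithful on $\AA^\heart$, which is condition (B). Here one must be slightly careful that the $t$-structure on $\AA$ is the one given in the statement and that "fully faithful on $\AA^\heart$" means full faithfulness of the restriction $\iota|_{\AA^\heart} \colon \AA^\heart \to \im\AA$ at the level of mapping spectra (equivalently, as a functor of categories) — which is exactly what (D) provides since full faithfulness of $\iota$ on $\AA^\heart$ is detected after restricting the target to the essential image $\im\AA$. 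Thus \Cref{thm:cat-main} applies to $\AA \to \im\AA$ and $K(\AA) \xrightarrow{\ \sim\ } K(\im\AA)$.

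Combining the two displays, the pushout square with $K(\im\AA)$ in the corner can have $K(\im\AA)$ replaced by $K(\AA)$ (and the maps $K(\AA)\to K(\BB)$, $K(\AA)\to K(\CC)$ are the evident ones, since $\iota$ postcomposed with the forgetful functor is $(b,c)$), yielding
\begin{center}
  \begin{tikzcd}
    K(\AA) \ar[r]\ar[d] & K(\CC)\ar[d]\\
    K(\BB)\ar[r] & \pushout K\left( \BB \coprod_{\AA} \CC \right),
  \end{tikzcd}
\end{center}
as desired. The main obstacle I anticipate is the bookkeeping in the last step: checking that after the substitution $K(\im\AA) \rightsquigarrow K(\AA)$ the two structure maps in the square really are induced by $b$ and $c$ (rather than some twist), which requires tracing through the identification of $\iota$ composed with the projections $\BB \vec{\times}_{b^*c_*}\CC \to \BB$ and $\to \CC$ with $b$ and $c$ respectively — this is where one genuinely uses the description of the oplax limit and of $\iota$ coming from the square in \Cref{thm:catpushoutLT}. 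A secondary point to be careful about is that all of this takes place in \emph{connective} $K$-theory, so I should phrase the localization/cofiber-sequence input as the statement that connective $K$-theory, being a summand of $\uloc$ composed with $K^{\mathrm{nc}}$ and then connective cover, sends the Land--Tamme cofiber sequence to a cofiber sequence of connective spectra; no negative $K$-theory is needed.
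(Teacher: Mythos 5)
Your overall architecture matches the paper's: apply the Land--Tamme formula to get a non-connective $K$-theory pushout square with $K^{\nc}(\im\AA)$ in the corner, then use \Cref{thm:cat-main} (with (D) giving unipotence of $\AA \to \im\AA$) to replace $K(\im\AA)$ by $K(\AA)$. The identification of conditions (A) and (B) for $\AA \to \im\AA$ is handled correctly.

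However, there is a genuine gap in the passage from non-connective to connective $K$-theory. You assert that connective $K$-theory is ``a direct summand of nc $K$-theory, hence sends this cofiber sequence to a cofiber sequence of connective spectra.'' This is false: connective $K$-theory is the connective cover of $K^{\nc}$, not a direct summand, and connective covers do not preserve cofiber/pushout squares. Concretely, a pushout square of spectra is equivalent to exactness of the sequence $K^{\nc}(\im\AA) \to K^{\nc}(\BB)\oplus K^{\nc}(\CC) \to K^{\nc}(\BB\coprod_\AA\CC)$, and the associated long exact sequence has a connecting map $K_0^{\nc}(\BB\coprod_\AA\CC) \to K_{-1}^{\nc}(\im\AA)$. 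If $K_{-1}(\im\AA) \neq 0$, then $K_0(\BB)\oplus K_0(\CC) \to K_0(\BB\coprod_\AA\CC)$ need not be surjective and the connective square fails to be a pushout. The paper closes this gap by invoking \cite[Theorem 1.1]{antieau2018ktheoretic}: since $\AA$ and $\im\AA$ (by \Cref{prop:small-t}) carry bounded $t$-structures, their $K_{-1}$ vanishes, which is exactly what is needed for the connective cover of the non-connective pushout square to remain a pushout. You should supply this $K_{-1}$-vanishing input explicitly; without it the last step of your argument does not go through.
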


\begin{proof}
  From \Cref{thm:catpushoutLT}, we have a pushout square
  \begin{center}
    \begin{tikzcd}
      K^{\mathrm{nc}}(\im \AA) \ar[r]\ar[d] & K^{\mathrm{nc}}(\CC)\ar[d]\\
      K^{\mathrm{nc}}(\BB)\ar[r] & K^{\mathrm{nc}}\left( \BB \coprod_{\AA} \CC \right) \pushout.
    \end{tikzcd}    
  \end{center}
  Condition (D) implies that the functor $\AA \to \im(\AA)$ satisfies the hypotheses of \Cref{thm:cat-main}, so we have an equivalence $K(\AA) \simeq K(\im\AA)$.
  Moreover, since $\AA$ and $\im \AA$ each have a bounded $t$-structure \cite[Theorem 1.1]{antieau2018ktheoretic} implies that $K_{-1}(\AA) = 0 = K_{-1}(\im \AA)$.
  This implies that the square above remains a pushout when we take connected covers and replace $K(\im \AA)$ by $K(\AA)$.
\end{proof}

In order to make this theorem easier to apply we give a simpler condition which implies (D) and is more natural to check in practice.

\begin{lem}\label{lem:pushout-alt}
  In the situation of \Cref{thm:pushouts} condition (D) is implied by
  \begin{itemize}
  \item[(D$'$)] The functors $\AA^\heartsuit \to \BB$ and $\AA^\heartsuit \to \CC$ are faithful.
  \end{itemize}    
\end{lem}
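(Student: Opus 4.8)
The plan is to unwind what condition (D) asks for in concrete terms and reduce it to the two faithfulness hypotheses in (D$'$). Recall that $\BB \vec{\times}_{b^*c_*} \CC$ is the oplax limit: its objects are triples $(B', C', r)$ with $r : B' \to b^*c_*(C')$, and the functor $\AA^\heartsuit \to \BB \vec{\times}_{b^*c_*} \CC$ sends $a$ to $(b(a), c(a), b^*\eta_c(a))$ where $\eta_c$ is the unit of the adjunction $c^* \dashv c_*$. For $a_0, a_1 \in \AA^\heartsuit$ the mapping spectrum in the oplax limit fits into a fiber sequence
\[ \map_{\BB \vec{\times} \CC}\bigl((b(a_0),c(a_0),\cdot),(b(a_1),c(a_1),\cdot)\bigr) \to \map_{\BB}(b(a_0),b(a_1)) \oplus \map_{\CC}(c(a_0),c(a_1)) \to \map_{\BB}(b(a_0), b^*c_*c(a_1)), \]
where the last map is the difference of the two ways of producing a map $b(a_0) \to b^*c_*c(a_1)$ — one by postcomposing a map $b(a_0) \to b(a_1)$ with $b^*\eta_c(a_1)$, the other by applying $b^*c_*$ to a map $c(a_0) \to c(a_1)$ and precomposing with $b^*\eta_c(a_0)$. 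So to check full faithfulness of $\AA^\heartsuit \to \BB \vec{\times}_{b^*c_*} \CC$ I need to show that for all $a_0, a_1 \in \AA^\heartsuit$ the map $\map_{\AA}(a_0,a_1) \to \map_\BB(b(a_0),b(a_1)) \oplus \map_\CC(c(a_0),c(a_1))$ realizes the source as the fiber of the difference map above.

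The key step is to rewrite the last term using the adjunction: $\map_\BB(b(a_0), b^*c_*c(a_1)) \simeq \map_{\Ind(\CC)}(c^*b_* b(a_0)^{??}, \dots)$ — more usefully, $\map_\BB(b(a_0), b^* c_* c(a_1))$, and I would instead compare the whole diagram to the corresponding diagram with $\Ind(\CC)$ in the corner, i.e. to the pullback defining maps in the oplax limit before restricting to the heart, and use that $b^*\eta_c(a_0)$ and $b^*\eta_c(a_1)$ are the structure maps. The cleanest route: the fiber of $\map_\BB(b(a_0),b(a_1)) \to \map_\BB(b(a_0),b^*c_*c(a_1))$ along postcomposition with $b^*\eta_c(a_1)$ is $\map_\BB(b(a_0), \fib(b(a_1) \to b^*c_*c(a_1)))$, and similarly one analyzes the $\CC$-summand; assembling, full faithfulness of $\AA^\heartsuit \to \BB\vec{\times}\CC$ becomes the statement that $\map_\AA(a_0,a_1)$ maps isomorphically onto an appropriate fiber product of mapping spectra over $\map_\BB(b(a_0), b^*c_*c(a_1))$. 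Condition (D$'$) — that $\AA^\heartsuit \to \BB$ and $\AA^\heartsuit \to \CC$ are faithful, meaning the maps on $\pi_0$ of mapping spectra between objects of the heart are injective — is exactly what forces the relevant connecting maps to be injective on the bottom homotopy group, which combined with the exactness built into the oplax limit fiber sequence upgrades ``monomorphism on each factor'' to ``the diagram is a pullback'' in the range of degrees that matters.

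I would organize the write-up as: (i) recall the fiber sequence computing mapping spectra in an oplax limit of an arrow; (ii) specialize to objects coming from $\AA^\heartsuit$ and identify the two competing maps into $\map_\BB(b(a_0), b^*c_*c(a_1))$ using $b^*\eta_c$; (iii) observe that faithfulness of $b|_{\AA^\heartsuit}$ and $c|_{\AA^\heartsuit}$ makes the map from $\map_\AA(a_0,a_1)$ into the fiber an equivalence on $\pi_0$ and trivially an iso on higher homotopy since those are computed the same way; (iv) conclude (D). The main obstacle I anticipate is bookkeeping in step (ii)–(iii): making sure that ``faithful'' (injective on $\pi_0$ of mapping spectra, with possibly no control in higher degrees) genuinely suffices, rather than needing ``fully faithful''. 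The resolution should be that the only place the diagram could fail to be a pullback is detected in the lowest degree — because all objects in sight are in the heart and $\AA^\heartsuit \hookrightarrow \Ind(\AA)^\heartsuit$ controls $\pi_0$, while the higher homotopy of all four mapping spectra in the square agree with the ambient ones automatically — so injectivity on $\pi_0$ closes the gap. If that degreewise argument turns out to be too delicate, the fallback is to instead verify (D) directly by checking that the counit/unit maps exhibiting $\AA^\heartsuit$ inside $\BB \vec{\times} \CC$ are isomorphisms, again reducing to faithfulness via the oplax-limit fiber sequence.
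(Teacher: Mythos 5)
Your setup is on the right track --- you correctly identify the oplax-limit fiber sequence for $\map_{\BB\vec{\times}\CC}(F^*a_0,F^*a_1)$ and correctly observe that (D) comes down to showing that $\map_\AA(a_0,a_1)$ hits the fiber of $\map_\BB \oplus \map_\CC \to \map_\BB(b(a_0), b^*c_*c(a_1))$. But step~(iii) contains a genuine gap. Faithfulness of $b|_{\AA^\heartsuit}$ and $c|_{\AA^\heartsuit}$ only gives that the cofibers $C_b := \cof(\Id \to b_*b^*)$ and $C_c := \cof(\Id \to c_*c^*)$ preserve coconnectivity (equivalently, injectivity on $\pi_0$ of the relevant mapping spectra). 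This is strictly weaker than an isomorphism on $\pi_0$, and your claim that the map into the fiber is ``an equivalence on $\pi_0$ and trivially an iso on higher homotopy since those are computed the same way'' does not follow: the higher homotopy of $\map_\BB(b(a_0), b^*c_*c(a_1))$ has no a priori relation to $\map_\AA(a_0,a_1)$, and two separate injectivity statements do not by themselves produce surjectivity anywhere. The escape hatch you sketch --- ``injectivity on $\pi_0$ closes the gap'' --- is precisely what needs justification and is not established by what precedes it.

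What actually closes the gap, and what the paper does, is a computation you are missing: because $F_*F^*$ sits as the \emph{pullback} of $b_*b^* \to b_*b^*c_*c^* \leftarrow c_*c^*$ (your fiber sequence is the mapping-spectrum shadow of this), the cofiber picks up a desuspension,
\[
  \cof(\Id \to F_*F^*) \simeq \Sigma^{-1}\, C_b \circ C_c,
\]
where $\circ$ is composition of endofunctors of $\Ind(\AA)$. Condition (D$'$) says exactly that $C_b$ and $C_c$ each preserve coconnectivity, hence $C_b \circ C_c$ does too; the extra $\Sigma^{-1}$ then pushes the cofiber into degrees $\leq -1$, which by \Cref{lem:condbprime} is condition~(B) for $F$, i.e.\ condition~(D). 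This shift is not cosmetic: without it one only gets the cofiber in degrees $\leq 0$, which is injectivity on $\pi_0$ and is not enough. So your proposal is missing the total-cofiber/$\Sigma^{-1}$ identification, which is the entire mechanism by which two \emph{faithfulness} hypotheses combine to give a \emph{fully faithfulness} conclusion. Once you add that computation (either at the level of endofunctors as the paper does, or pointwise on mapping spectra as you set up), the argument does go through.
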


\begin{proof}


  
  Let $F$ denote the functor
  $ \AA \to \BB \vec{\times}_{b^*c_*} \CC $.
  Using \Cref{lem:condbprime} it suffices to show that
  $\cof(a \to F_*F^*(a))$ is $\leq -1$ for each $a \in \AA^{\heartsuit}$.
  In order to proceed we'll need to give a formula for $F_*F^*$.
  From the pullback square
  \begin{center}    
    \begin{tikzcd}\pullback
      \Map_{\BB\vec{\times}_{b^*c_*}\CC}(F^*x,F^*y) \ar[rr] \ar[d] & & \Map_{\BB}(b^*x, b^*x) \ar[d] \\
      \Map_{\CC}(c^*x, c^*y) \ar[r, "b^*c_*"] & \Map_{\BB}(b^*c_*c^*x, b^*c_*c^*y) \ar[r, "b^*\eta_c \circ-"] & \Map_{\BB}(b^*x, b^*c_*c^*y)      
    \end{tikzcd}
  \end{center}  
  natural in both $x$ and $y$ we learn that $F_*F^*$ sits in a pullback square
  \begin{center}
    \begin{tikzcd}
      \pullback F_*F^* \ar[r] \ar[d] & b_*b^* \ar[d, "b_*b^* \circ \eta_c"] \\
      c_*c^* \ar[r, "\eta_b \circ c_*c^*"] & b_*b^*c_*c^*.
    \end{tikzcd}
  \end{center}
  We can then read off that
  \[ \cof(\Id \to F_*F^*) \simeq \Sigma^{-1} \cof(\Id \to b_*b^*) \circ \cof(\Id \to c_*c^*) \]
  where $\circ$ is the composition monoidal structure on $\Fun^{L}(\Mod(A),\Mod(A))$.
  Using \cite[Remark 4.9]{ncgstuff} (which is a variant of \Cref{lem:condbprime}) and compatibility with colimits we can reformulate the faithfulness hypothesis as saying that $\cof(\Id \to c_*c^*)$ and $\cof(\Id \to b_*b^*)$ preserve coconnectivity. Composing and desuspending we obtain the desired coconnectivity bound on $\cof(\Id \to F_*F^*)$.
\end{proof}

For discrete rings condition (D$'$) has a simple interpretation:
A map $A \to B$ is fully faithful on the heart exactly when $B$ is right faithfully flat as an $A$-module (see \cite[Lemma 4.7]{ncgstuff}). Consequently, we obtain the following corollary, which appeared in the introduction as \Cref{thm:discrete-pushouts}.

\begin{cor} \label{cor:discrete-pushouts}
  Suppose $B \xleftarrow{f} A \xrightarrow{g} C$ is a span of discrete rings
  where $A$ is left regular coherent and both $f$ and $g$ are right faithfully flat.
  Then connective $K$-theory preserves the pushout of this span.
\end{cor}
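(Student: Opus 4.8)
The plan is to deduce this from \Cref{thm:pushouts} by verifying its hypotheses for the span of module categories $\Mod(B)^{\omega} \xleftarrow{f} \Mod(A)^{\omega} \xrightarrow{g} \Mod(C)^{\omega}$, using the simplified criterion of \Cref{lem:pushout-alt}. First I would observe that $\Mod(A)^{\omega}$ carries a bounded $t$-structure: since $A$ is left regular coherent, \Cref{lem:discreteregular} tells us $\Mod(A)^{\omega}$ is regular, and it is automatically bounded, so the $t$-structure on $\Ind(\Mod(A)) = \Mod(A)$ coming from \Cref{lem:generalbigt} restricts to the compact objects. This is precisely the bounded $t$-structure required as input to \Cref{thm:pushouts}, with heart the finitely presented $A$-modules.

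Next I would check condition (D$'$): that the base-change functors $\Mod(A)^{\heart} \to \Mod(B)^{\omega}$ and $\Mod(A)^{\heart} \to \Mod(C)^{\omega}$ are faithful. By the cited characterization (\cite[Lemma 4.7]{ncgstuff}), a ring map $A \to B$ induces a functor which is fully faithful on hearts exactly when $B$ is right faithfully flat over $A$; faithfulness on hearts is the (weaker, but here also satisfied) consequence. Since both $f$ and $g$ are assumed right faithfully flat by hypothesis, condition (D$'$) holds for this span. Then \Cref{lem:pushout-alt} upgrades (D$'$) to condition (D), namely that the induced functor $\Mod(A)^{\heart} \to \Mod(B)^{\omega} \vec{\times}_{b^*c_*} \Mod(C)^{\omega}$ is fully faithful.

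With the input $t$-structure and condition (D) in hand, \Cref{thm:pushouts} directly yields that connective $K$-theory sends the pushout square in $\Cat^{\perf}$ to a pushout square of $K$-theory spectra. The last thing to spell out is that $\Mod(B)^{\omega} \coprod_{\Mod(A)^{\omega}} \Mod(C)^{\omega}$ is identified with $\Mod(B \coprod_A C)^{\omega}$ — this follows from \Cref{thm:LTformula} (or the ring-level discussion following \Cref{thm:catpushoutLT}), since the pushout of module categories along base-change functors coming from ring maps is the module category of the pushout ring. Combining, $K(B \coprod_A C)$ is the pushout of $K(B) \leftarrow K(A) \rightarrow K(C)$, which is the assertion.

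The main obstacle I anticipate is purely bookkeeping rather than conceptual: one must be careful that the abstract $\vec{\times}$ and $\odot$ constructions, stated for $\Cat^{\perf}$ with bonding maps in $\Fun^L$, specialize correctly to the ring-theoretic setting — in particular that $b^*c_*$ is the base-change-then-restriction bimodule $B \otimes_A C$ viewed appropriately, and that $\im(\Mod(A)^{\omega})$ really is $\Mod(A)^{\omega}$ and not some larger category (so that the $K$-equivalence $K(\Mod(A)^{\omega}) \simeq K(\im \Mod(A)^{\omega})$ supplied by \Cref{thm:cat-main} is what we want). These compatibilities are exactly the content of the remarks following \Cref{cnstr:odot} and \Cref{thm:LTformula}, so invoking them carefully suffices; no genuinely new argument is needed beyond assembling \Cref{thm:pushouts}, \Cref{lem:pushout-alt}, \Cref{lem:discreteregular}, and \Cref{thm:LTformula}.
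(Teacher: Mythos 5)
Your proposal is correct and follows the same route as the paper: the corollary is stated immediately after \Cref{lem:pushout-alt} and the remark that for discrete rings condition (D$'$) is equivalent to right faithful flatness (via \cite[Lemma 4.7]{ncgstuff}), with the bounded $t$-structure on $\Mod(A)^{\omega}$ supplied by \Cref{lem:discreteregular}. Your added bookkeeping — confirming that $\Mod(B)^{\omega} \coprod_{\Mod(A)^{\omega}} \Mod(C)^{\omega}$ is $\Mod(B \coprod_A C)^{\omega}$ by chaining \Cref{thm:catpushoutLT} and \Cref{thm:LTformula} — is correct and is the content the paper leaves implicit.
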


\begin{rmk}
  Note that (D) does not imply (D$'$).
  For example if $X$ and $Y$ are (well-behaved) smooth varieties which form a Zariski covering of $Z$, then (D) is satisfied for the span
  \[ \QCoh(X) \leftarrow \QCoh(Z) \rightarrow \QCoh(Y) \] 
  while (D$'$) need not be satisfied.
  \tqed
\end{rmk}

\begin{rmk}
  \Cref{cor:discrete-pushouts} (and in turn \Cref{lem:pushout-alt} and \Cref{thm:pushouts}) can be viewed as a generalization of \cite[Theorems 1 and 4]{Waldhausen-free} where the stronger condition that $f: A \to B$ and $g: A \to C$ are pure inclusions\footnote{This asks that $B$ have a splitting $B \cong f(A) \oplus I$ as an $A$-bimodule where $I$ is a projective right $A$-module.} was imposed.
  \tqed
\end{rmk}


\section{Applications and Examples}
\label{sec:examples}

In this section we work through a collection of applications and examples which use \Cref{thm:opener}.
Of particular note are
\begin{itemize}
\item[(]\hspace{-0.15cm}Prop.\ref{thm:a1invariance}) which proves $\A^1$-invariance for regular categories.
\item[(]\hspace{-0.15cm}Prop.\ref{thm:aninvariance}) which proves $\A^n$-invariance for regular categories in high degrees.
\item[(]\hspace{-0.15cm}Prop.\ref{thm:cochainalgs}) which analyzes the $K$-theory of unipotent local systems.
\item[(]\hspace{-0.15cm}Exm.\ref{exm:smalltorexm}, \ref{exm:An-doubled} and \ref{exm:hidden-sing}) which show that the conditions of \Cref{thm:opener} are sharp.
\end{itemize}

\subsection{Invariance theorems}\ 

We give a short proof of $\A^1$-invariance of $K$-theory for categories with a bounded $t$-structure.
This result was first proven for regular Noetherian rings by Quillen in his foundational paper \cite{quillenhigherktheory}.
Building on this we then prove that $K_j(-)$ is $\A^n$-invariant once $j \geq n-1$ (again for categories with a bounded $t$-structure).
Using \Cref{thm:pushouts} we then extend $\A^1$-invariance to the case of adjoining free variables generalizing the main results of \cite{gerstenfree}. 

\begin{prop}[$\A^1$-invariance for regular categories]\label{thm:a1invariance}\
  
  If $\CC \in \Cat^{\perf}$ admits a bounded $t$-structure, 
  then $K(\CC) \simeq K(\CC[x_0])$.
\end{prop}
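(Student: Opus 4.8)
The plan is to deduce $\A^1$-invariance from \Cref{thm:cat-main} by exhibiting the inclusion $\CC \hookrightarrow \CC[x_0]$ as a unipotent map, i.e.\ as satisfying hypotheses (A) and (B). Here $\CC[x_0] = \CC \otimes \Mod(\Ss[x_0])^\omega$, and since $\CC$ carries a bounded $t$-structure it is in particular regular and bounded as an object of $\Cat^{\perf}_{\geq 0}$. The key input is \Cref{prop:polyringreg}: for $n \neq 0$ polynomial generators preserve regularity, but for $n = 0$ this fails — so the content of the proposition is precisely that even though $\CC[x_0]$ need not be regular, its $K$-theory is still unchanged. The strategy is to instead apply devissage directly to the inclusion $F : \CC \to \CC[x_0]$.

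First I would check condition (A): the image of $F$ generates $\CC[x_0]$. This holds because $\Ss[x_0]$-modules that are compact are built out of $\Ss[x_0]$ itself under finite colimits and retracts, and $\Ss[x_0] \otimes \CC$ is generated as a stable category by $\{x_0^k \cdot c\}$; since multiplication by $x_0$ is an endomorphism, each shifted copy $x_0^k \cdot c$ is already equivalent to $F(c)$ after forgetting, so the image of $F$ (the ``weight zero'' part, i.e.\ $\Mod(\Ss) \otimes \CC$) generates. More precisely there is a cofiber sequence $\Ss \xrightarrow{x_0} \Ss \to \Ss/x_0$ of $\Ss[x_0]$-modules exhibiting every compact $\Ss[x_0]$-module as finitely many extensions of copies of $F(c)$'s, which gives (A). Second, condition (B): $F$ must be fully faithful on the heart of $\CC$. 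I would use the equivalent form (B$'$) from \Cref{lem:condbprime}: for $c \in \CC^\heart$ the cofiber of $c \to F_* F^* (c)$ should be $\leq -1$ in $\Ind(\CC)$. Now $F^*(c) = c[x_0] = \bigoplus_{k \geq 0} \Sigma^0 c$ and its right adjoint $F_*$ applied back is the underlying object, so $F_* F^*(c) \simeq \prod_{k \geq 0} c$ (or a mapping-spectrum computation $\map_{\Ss[x_0]}(\Ss[x_0], c[x_0]) $ restricted along $\Ss \to \Ss[x_0]$); the unit $c \to F_* F^*(c)$ is the inclusion of the $k=0$ summand, whose cofiber is $\prod_{k \geq 1} c$, which is connective, \emph{not} coconnective — so one must be more careful. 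The right computation uses that $F_*$ for the inclusion of weight-zero modules is given by $\tau$-truncation of a Tor/Hom against $\Ss/x_0$; concretely $\cof(\Id \to F_*F^*)$ is, as an endofunctor of $\Ind(\CC)$, given by $\Sigma^{-1}(\Ss/x_0 \otimes_{\Ss} -)$ suitably interpreted, and since $\Ss/x_0 \simeq \Ss \oplus \Sigma \Ss$ is the trivial square-zero extension this lands in degrees $\leq -1$. This is exactly the degree-$0$ ($n=0$) edge case of the polynomial computation, and it is the step I expect to require the most care.

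Concretely, the cleanest route is to observe that $\CC[x_0] = \CC \otimes \Mod(\Ss[x_0])^\omega$ and $\Ss[x_0]$ is coconnective-adjacent in the sense that $\Ss \to \Ss[x_0]$ has $\cof = \Sigma \Ss[x_0]$... no — rather, one works with the span side: the point is that $\Ss[x_0]$ sits in a pullback/pushout and the inclusion $\Ss \to \Ss[x_0]$ satisfies condition (D$'$)-type faithfulness. But for a direct application of \Cref{thm:cat-main} the honest claim to verify is: the functor $(-) \otimes_{\Ss} \Ss[x_0]$ has the property that $\cof(\Id \to (-)^{h\Ss[x_0]\text{-restriction}})$ raises coconnectivity, which follows because $\Ss[x_0]$ is free and the ``extra'' summands all carry a positive power of $x_0$, hence after the relevant Hom-computation contribute in strictly negative degrees relative to the $t$-structure generated by $\CC_{\geq 0}$. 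Once (A) and (B) (via (B$'$)) are in hand, \Cref{thm:cat-main} immediately gives $K(\CC) \simeq K(\CC[x_0])$, and also re-derives $K_{-1}(\CC[x_0]) = 0$.

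The main obstacle, then, is the verification of (B$'$) for the degree-zero polynomial generator: one genuinely cannot cite \Cref{prop:polyringreg} (which excludes $n = 0$) and must instead directly analyze the unit map $c \to F_* F^* c$ and show its cofiber is $\leq -1$. I expect this to come down to the identity $\cof(\Id \to F_*F^*) \simeq \Sigma^{-1} \cof(\Id \to (-)^{\wedge}_{x_0})$ or an equivalent description via the bimodule $\cof(\Ss[x_0] \xrightarrow{x_0} \Ss[x_0]) \simeq \Ss$ sitting in weight shifted by one — the ``weight grading'' providing exactly the one degree of room needed. After that, everything is a black-box application of the main theorem.
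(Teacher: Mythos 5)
Your approach — applying \Cref{thm:cat-main} directly to the inclusion $F : \CC \to \CC[x_0]$ — cannot work, because $F$ is not unipotent. You actually spotted the problem yourself mid-proof: for $c \in \CC^\heart$, $F_*F^*(c) \simeq \bigoplus_{k \geq 0} c$ (restriction of scalars of $c \otimes \Ss[x_0]$ back to $\Ss$), the unit is the inclusion of the $k=0$ summand, and the cofiber $\bigoplus_{k \geq 1} c$ sits in degree $0$, not in degrees $\leq -1$. That is precisely the failure of (B$'$). The $x_0$-weight grading does \emph{not} provide the "one degree of room" you hope for: the $t$-structure on $\Ind(\CC)$ sees only spectral degree, and every summand of $c[x_0]$ lives in the same spectral degree as $c$. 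The subsequent attempted repair — rewriting $\cof(\Id \to F_*F^*)$ as $\Sigma^{-1}(\Ss/x_0 \otimes_{\Ss} -)$ — is simply not a correct identification of the endofunctor; $F_*$ is plain restriction of scalars along $\Ss \to \Ss[x_0]$, with no truncation, and the cofiber of the unit is the augmentation ideal $\bigoplus_{k\geq 1}\Ss$, which is connective. There is no version of this computation that lands in degrees $\leq -1$.

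What the paper does instead is route around the non-unipotence of $\CC \to \CC[x_0]$ by trading it for a map that \emph{is} unipotent. From the definition $\nil := \cof(\uloc(\Ss) \to \uloc(\Ss[x_0]))$ and the splitting $\uloc(\Ss[x_0]) \simeq \o \oplus \nil$, $\A^1$-invariance is equivalent to the vanishing of $K^{\nc}(\nil \otimes \CC)$ in non-negative degrees. \Cref{exm:a1-issue} (the $\odot$-product computation for the span $\Ss \leftarrow \Ss[x_{-1}] \to \Ss$) gives the shifted splitting $\uloc(\Ss[\epsilon_{-1}]) \simeq \o \oplus \Sigma^{-1}\nil$. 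Now the map $\CC \to \CC[\epsilon_{-1}]$ \emph{does} satisfy (B$'$): here $F_*F^*(c) \simeq c \oplus \Sigma^{-1}c$, the cofiber of the unit is $\Sigma^{-1}c$, which is concentrated in degree $-1$. Applying \Cref{thm:cat-main} to $\CC \to \CC[\epsilon_{-1}]$ gives $K_j(\CC) \simeq K_j(\CC[\epsilon_{-1}])$ for $j \geq 0$ and $K_{-1}$ of both vanishes; reading this off against the splitting forces $K^{\nc}_j(\nil \otimes \CC) = 0$ for $j \geq 0$, which is the statement. The crucial conceptual point you missed is that the obstruction to $\A^1$-invariance, namely $\nil$, appears in two different rings — once unshifted in $\Ss[x_0]$, where the augmentation is not unipotent, and once desuspended in $\Ss[\epsilon_{-1}]$, where the connective cover map \emph{is} unipotent — and it is the second appearance that the devissage theorem can see.
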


\begin{proof}
  In order to prove this we must show that $K^{\nc}(\nil \otimes \CC)$ vanishes in non-negative degrees.
  Applying \Cref{thm:cat-main} to the map
  $ \CC \to \CC[\epsilon_{-1}] $
  and moving to the other side of the equivalence from \Cref{exm:a1-issue} we learn that $K^{\mathrm{nc}}( \nil \otimes \CC )$ vanishes in non-negative degrees as desired\footnote{In degree zero this uses that $K_{-1}$ of $\CC$ and $\CC[\epsilon_{-1}]$ both vanish.}.
\end{proof}

Just as $\nil$ controls $\A^1$-invariance, $\A^n$-invariance is controlled by tensor-powers of $\nil$.
Using the same ideas we can show that $K$-theory is $\A^n$-invariant in sufficiently large degrees as well.

\begin{prop}[$\A^n$-invariance for regular categories]\label{thm:aninvariance}
  Suppose $C \in \Cat^{\perf}$ admits a bounded $t$-structure.
  Then $\tau_{\geq n-1}K(C) \simeq \tau_{\geq n-1}K(C[x_1,\dots,x_n])$, where $|x_i| = 0$.
\end{prop}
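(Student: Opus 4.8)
The plan is to reduce the statement to a statement about the nc motive $\nil^{\otimes n}$, in the same spirit as the proof of \Cref{thm:a1invariance}, and then extract the degree range from an inductive Tor-amplitude estimate. First I would recall from \Cref{exm:a1-issue} that for each $n$ we have a splitting $\uloc(\Ss[\epsilon_{n}]) \simeq \o \oplus \Sigma^{-1}\nil_{n+1}$, and more generally that $\nil := \nil_0$ satisfies $\uloc(\Ss[\epsilon_0]) \simeq \o \oplus \Sigma^{-1}\nil$. Since $\CC[x_1,\dots,x_n] = \CC \otimes \Mod(\Ss[x_1])^{\omega} \otimes \cdots \otimes \Mod(\Ss[x_n])^{\omega}$ (all generators in degree $0$), iterating the fundamental cofiber sequence defining $\nil$ and using that $\uloc$ is symmetric monoidal gives a direct-sum decomposition of $\uloc(\CC[x_1,\dots,x_n])$ in which the ``error terms'' are built out of $\nil^{\otimes k} \otimes \uloc(\CC)$ for $1 \le k \le n$. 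So it suffices to prove that $K(\nil^{\otimes k} \otimes \CC)$ is $(n-2)$-connected (i.e. vanishes in degrees $\le n-2$) for each $1 \le k \le n$; since $k \le n$ it is enough to show that $K(\nil^{\otimes k} \otimes \CC)$ is $(k-2)$-connected.

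Next I would set up the induction on $k$. The base case $k=1$ is exactly the content of the proof of \Cref{thm:a1invariance}: applying \Cref{thm:cat-main} to $\CC \to \CC[\epsilon_{-1}]$ (which is unipotent because $\epsilon_{-1}$ contributes only in negative degrees, so condition (B$'$)/(2) holds), one gets $K(\CC) \simeq K(\CC[\epsilon_{-1}])$ and hence, after moving across the splitting $\uloc(\CC[\epsilon_{-1}]) \simeq \uloc(\CC) \oplus \Sigma^{-1}(\nil \otimes \CC)$, that $K^{\mathrm{nc}}(\nil \otimes \CC)$ vanishes in non-negative degrees; in particular $K(\nil\otimes\CC)$ is $(-1)$-connected, which is the $k=1$ case. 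For the inductive step I want to upgrade the connectivity estimate by one each time we tensor with another copy of $\nil$. The key point is that $\nil \otimes \CC$, while not itself carrying a bounded $t$-structure, is (by the explicit description) a summand of $\Sigma \uloc(\CC[\epsilon_{-1}])$ and more usefully is the nc motive of a category that looks like ``$\CC$-modules over $\End(\text{something with Tor-amplitude in }[-\infty,-1])$''; iterating, $\nil^{\otimes k}\otimes\CC$ is a retract of the nc motive of an analogous category where the relevant connective-cover cofiber has Tor amplitude in $[-\infty,-k]$. Concretely, I would instead argue purely at the level of motives: $\nil$ is a retract of $\Sigma \uloc(\Ss[\epsilon_0]) = \Sigma\uloc(\fib(\Ss[\epsilon_{-1}]\to\Ss))$-type data, and the cofiber sequence $\uloc(\Ss)\to\uloc(\Ss[\epsilon_{-1}])\to \Sigma^{-1}(\nil)$ combined with $K$ being $1$-connective on $\Sigma^{-1}\nil\otimes\CC$ gives that $K(\nil\otimes\CC)$ is connective; then tensoring the cofiber sequence with $\nil^{\otimes(k-1)}$ and using the inductive hypothesis that $K(\nil^{\otimes(k-1)}\otimes\CC)$ and $K(\nil^{\otimes(k-1)}\otimes\CC[\epsilon_{-1}])$ are $(k-3)$-connected pushes the connectivity of $K(\nil^{\otimes k}\otimes\CC)$ up to $(k-2)$.

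Assembling: by the decomposition of the first paragraph, $K(\CC[x_1,\dots,x_n])\simeq K(\CC)\oplus\bigoplus$ of pieces each of which is a $K$-theory of $\nil^{\otimes k}\otimes\CC$ for some $1\le k\le n$, hence each $(k-2)$-connected and a fortiori $(n-2)$-connected (the worst case being $k=n$, contributing possible homotopy starting in degree $n-1$). Applying $\tau_{\ge n-1}$ kills all of these error terms, yielding $\tau_{\ge n-1}K(\CC)\simeq\tau_{\ge n-1}K(\CC[x_1,\dots,x_n])$ as claimed. (Passing from $\tau_{\ge n-1}$ of spectra to the isomorphism $K_i(\CC)\cong K_i(\CC[x_1,\dots,x_n])$ for $i\ge n-1$ of \Cref{thm:introan-inv} is then immediate, the $i=n-1$ case using that the error terms have no homotopy below degree $n-1$.)

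The main obstacle I expect is the bookkeeping in the inductive connectivity estimate: making precise the claim that tensoring with an extra copy of $\nil$ raises the connectivity of the $K$-theory of the relevant motive by exactly one. There are two ways to make this rigorous — either (i) track Tor-amplitudes through an explicit iterated endomorphism-algebra model and reapply \Cref{prop:relativeopener}/\Cref{thm:cat-main} at each stage, or (ii) stay at the level of motives and chase the connectivity through the cofiber sequences defining $\nil$, using only that $K$ is an additive invariant and the base case. Approach (ii) is cleaner but requires being careful that the splittings are compatible with the symmetric monoidal structure and that no negative-degree contributions sneak in; approach (i) is more hands-on but needs the observation (analogous to the footnote after \Cref{thm:opener}) that Tor amplitudes add under the relevant tensor/endomorphism operations. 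Either way, the conceptual content is entirely in the $n=1$ case plus multiplicativity of $\nil$, and the degree shift is forced by the fact that $\nil^{\otimes n}\otimes\CC$ carries a $t$-structure only after $n$-fold desuspension-type corrections, costing one degree of connectivity per factor.
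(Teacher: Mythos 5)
Your overall strategy — decompose $\uloc(\CC[x_1,\dots,x_n])$ into nil-pieces and shift degrees by tensoring with exterior generators in negative degree — is exactly the idea behind the paper's proof. However, every connectivity/vanishing statement in your write-up points in the wrong direction, and as a result the proposal does not establish the proposition.

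The point you get backwards: the summands $K^{\nc}(\nil^{\otimes k}\otimes\CC)$ are \emph{coconnective} (bounded above), not connective. From the splitting $\uloc(\CC[\epsilon_{-1}]) \simeq \uloc(\CC) \oplus \Sigma^{-1}\nil\otimes\uloc(\CC)$ and the fact (\Cref{thm:cat-main}) that $K^{\nc}(\CC)\to K^{\nc}(\CC[\epsilon_{-1}])$ is an equivalence in degrees $\geq -1$, what follows is that $K^{\nc}(\Sigma^{-1}\nil\otimes\CC)$ \emph{vanishes in degrees $\geq -1$}, i.e.\ is concentrated in degrees $\leq -2$. Your base case is already self-contradictory on this point: you correctly write that ``$K^{\mathrm{nc}}(\nil \otimes \CC)$ vanishes in non-negative degrees'' and then, treating this as interchangeable with its opposite, conclude ``in particular $K(\nil\otimes\CC)$ is $(-1)$-connected.'' Likewise your opening reduction claims ``it suffices to prove that $K(\nil^{\otimes k}\otimes\CC)$ vanishes in degrees $\leq n-2$,'' but since $K(\CC)\to K(\CC[x_1,\dots,x_n])$ is split the error summand is a connective spectrum, and $\tau_{\geq n-1}K(\CC)\simeq\tau_{\geq n-1}K(\CC[x_1,\dots,x_n])$ requires it to vanish in degrees $\geq n-1$. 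The correct target is that $K^{\nc}(\nil^{\otimes k}\otimes\CC)$ vanishes in degrees $\geq k-1$.

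If you reverse the direction everywhere, your inductive skeleton does work: the split cofiber sequence $\nil^{\otimes(k-1)}\otimes\uloc(\CC)\to\nil^{\otimes(k-1)}\otimes\uloc(\CC[\epsilon_{-1}])\to\Sigma^{-1}\nil^{\otimes k}\otimes\uloc(\CC)$, together with the inductive hypothesis applied to $\CC$ and to $\CC[\epsilon_{-1}]$ (the latter gains a bounded $t$-structure from the $k=1$ case of \Cref{thm:cat-main}), and the single desuspension each step, show $K^{\nc}(\nil^{\otimes k}\otimes\CC)$ vanishes in degrees $\geq k-1$. The paper skips the induction: it observes that $\Sigma^{-k}\nil^{\otimes k}\otimes\CC$ is a summand of $\uloc(\CC[\epsilon_1,\dots,\epsilon_k])$ with all $\epsilon_i$ in degree $-1$, and applies \Cref{thm:cat-main} once per $k$ to $\CC\to\CC[\epsilon_1,\dots,\epsilon_k]$.
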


\begin{proof}
  From the equivalence
  \[ \uloc(\Ss[x_1,\dots,x_n]) \simeq \uloc(\Ss[x]^{\otimes n}) \simeq (\uloc(\Ss[x]))^{\otimes n} \simeq ( \o \oplus \nil )^{\otimes n} \]
  we can read off that the obstructions to $\A^n$-invariance in degree $j$ are
  $K_j(\nil^{\otimes k} \otimes \CC)$
  for $1 \leq k \leq n$.
  Using \Cref{exm:a1-issue} we can find $\Sigma^{-k}\nil^{\otimes k} \otimes \CC$ as a summand in $\uloc(\CC[\epsilon_1,\dots,\epsilon_k])$ (where each exterior generator is in degree $-1$).
  Applying \Cref{thm:cat-main} to the map $\CC \to \CC[\epsilon_1,\dots,\epsilon_k]$ we learn that $K^{\nc}( \Sigma^{-k}\nil^{\otimes k} \otimes \CC )$ vanishes in degrees $\geq -1$, which lets us conclude. \qedhere
\end{proof}

\begin{cor}\label{cor:ringaninvariance}
  Let $R$ be a left regular coherent ring. Then $K_i(R) = K_i(R[x_1,\dots,x_n])$ for $i\geq n-1$.
\end{cor}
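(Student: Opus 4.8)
The plan is to derive \Cref{cor:ringaninvariance} as a direct corollary of \Cref{thm:aninvariance} applied to the category $\CC = \Mod(R)^\omega$ of perfect $R$-modules. First I would observe that, since $R$ is left regular coherent, \Cref{lem:discreteregular} tells us that $\Mod(R)^\omega$ is regular, and in particular (once we recall that regular implies the $t$-structure on $\Ind(\CC)$ restricts to $\CC$) it carries a bounded $t$-structure: the standard $t$-structure on perfect $R$-modules has heart the finitely presented $R$-modules and is bounded because perfect complexes are bounded. So the hypothesis of \Cref{thm:aninvariance} is met.

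Next I would identify $\CC[x_1,\dots,x_n]$ with $\Mod(R[x_1,\dots,x_n])^\omega$. By definition $\CC[x_i] = \CC \otimes \Mod(\Ss[x_i])$ with $x_i$ in degree $0$, and iterating gives $\CC[x_1,\dots,x_n] \simeq \Mod(R)^\omega \otimes \Mod(\Z[x_1,\dots,x_n])^\omega \simeq \Mod(R \otimes_\Z \Z[x_1,\dots,x_n])^\omega = \Mod(R[x_1,\dots,x_n])^\omega$, using that $\Ss[x_0] \otimes_\Ss \Z \simeq \Z[x_0]$ (the degree-zero polynomial generator) and that tensoring perfect module categories over the sphere corresponds to tensoring the rings. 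Then $K_i$ of these categories is by definition $K_i(R)$ and $K_i(R[x_1,\dots,x_n])$ respectively.

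Finally I would invoke \Cref{thm:aninvariance}, which yields $\tau_{\geq n-1} K(\CC) \simeq \tau_{\geq n-1} K(\CC[x_1,\dots,x_n])$, and read off that the induced maps $K_i(R) \to K_i(R[x_1,\dots,x_n])$ are isomorphisms for $i \geq n-1$. The main (and really only) obstacle is bookkeeping: making sure the identification $\CC[x_1,\dots,x_n] \simeq \Mod(R[x_1,\dots,x_n])^\omega$ is clean and that the degree conventions for the polynomial generators match between the categorical statement and the ring statement. There is no genuine mathematical difficulty here beyond what is already packaged in \Cref{thm:aninvariance}; this corollary is essentially a translation of that proposition into the language of discrete rings.
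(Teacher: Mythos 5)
Your proposal is correct and reconstructs exactly the derivation the paper leaves implicit after \Cref{thm:aninvariance}: take $\CC = \Mod(R)^\omega$, use \Cref{lem:discreteregular} (together with the observation that the standard $t$-structure on perfect complexes over a discrete ring is automatically bounded) to verify the hypothesis of \Cref{thm:aninvariance}, identify $\CC[x_1,\dots,x_n]$ with $\Mod(R[x_1,\dots,x_n])^\omega$, and read off the claim in degrees $\geq n-1$.

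One small bookkeeping slip worth flagging in the identification step: as written, the chain $\Mod(R)^\omega \otimes \Mod(\Z[x_1,\dots,x_n])^\omega \simeq \Mod(R\otimes_\Z \Z[x_1,\dots,x_n])^\omega$ mixes the Lurie tensor product in $\Cat^\perf$ (which is relative to $\Ss$, and would produce $R\otimes_\Ss \Z[x_1,\dots,x_n]$) with a tensor over $\Z$, and the preceding identification $\CC[x_1,\dots,x_n]\simeq \Mod(R)^\omega\otimes \Mod(\Z[x_1,\dots,x_n])^\omega$ silently replaces $\Ss[x_1,\dots,x_n]$ by $\Z[x_1,\dots,x_n]$. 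The cleanest route avoids $\Z$ entirely: $\CC[x_1,\dots,x_n] = \Mod(R)^\omega \otimes \Mod(\Ss[x_1,\dots,x_n])^\omega \simeq \Mod(R\otimes_\Ss \Ss[x_1,\dots,x_n])^\omega$, and since the underlying spectrum of $\Ss[x_1,\dots,x_n]$ is a free $\Ss$-module $\bigoplus_{\N^n}\Ss$, the ring $R\otimes_\Ss \Ss[x_1,\dots,x_n]$ is discrete and equal to $R[x_1,\dots,x_n]$. (Alternatively one can make the passage to $\Mod(\Z)^\omega$-linear categories explicit, which also works.) The conclusion $\CC[x_1,\dots,x_n]\simeq \Mod(R[x_1,\dots,x_n])^\omega$ is of course correct, so this does not affect the argument.
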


As mentioned in the introduction, the above corollary, which is more subtle for $n>1$, was already known when $R$ is a discrete ring, where it follows from the Farrell--Jones conjecture for the groups $\Z^n$.

\begin{prop}[Free generator invariance]\label{thm:free-invariance}
  Let $\CC \in \Cat^{\perf}$ have a bounded $t$-structure.
  Then $K(\CC) \simeq K(\CC\{x_1,\dots,x_n\})$, where $|x_i| = 0$.
\end{prop}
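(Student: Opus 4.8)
The plan is to induct on $n$, splitting off one free generator at each stage and feeding the resulting span into \Cref{thm:pushouts}. The case $n=0$ is vacuous and the case $n=1$ is \Cref{thm:a1invariance} (since $\CC\{x_1\} = \CC[x_0]$), so fix $n \geq 2$ and assume the statement for $n-1$.

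The first step is to write the free $\E_1$-algebra as a coproduct, $\Ss\{x_1,\dots,x_n\} \simeq \Ss\{x_1,\dots,x_{n-1}\} \coprod_{\Ss} \Ss[x_0]$, and transport this to the categorical setting: tensoring the associated span of module categories with $\CC$ and applying \Cref{thm:catpushoutLT} together with \Cref{thm:LTformula} identifies $\CC\{x_1,\dots,x_n\}$ with the pushout in $\Cat^{\perf}$ of
\[ \CC\{x_1,\dots,x_{n-1}\} \longleftarrow \CC \longrightarrow \CC[x_0], \]
where the two legs are base change along the unit maps $\Ss \to \Ss\{x_1,\dots,x_{n-1}\}$ and $\Ss \to \Ss[x_0]$.

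Next I would verify the hypotheses of \Cref{thm:pushouts} for this span. The apex $\CC$ has a bounded $t$-structure by assumption, so it only remains to check condition (D), which I would do through the more hands-on condition (D$'$) of \Cref{lem:pushout-alt}: both legs must be faithful on $\CC^{\heart}$. Since both $\Ss\{x_1,\dots,x_{n-1}\}$ and $\Ss[x_0]$ are free, hence faithfully flat, as $\Ss$-modules — they are wedges of spheres indexed by words, respectively by non-negative integers — unwinding the coconnectivity criterion used in the proof of \Cref{lem:pushout-alt} shows that the cofiber of the unit $\mathrm{Id} \to G_*G^*$ attached to each leg carries an object $c$ to a wedge of copies of $c$; such wedges preserve coconnectivity because $\pi_i^{\heart}$ commutes with filtered colimits in $\Ind(\CC)$. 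Hence (D$'$) holds and \Cref{thm:pushouts} produces a pushout square of connective $K$-theory spectra with corners $K(\CC)$, $K(\CC[x_0])$, $K(\CC\{x_1,\dots,x_{n-1}\})$ and $K(\CC\{x_1,\dots,x_n\})$.

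Finally, in this square the edge $K(\CC) \to K(\CC[x_0])$ is an equivalence by \Cref{thm:a1invariance} and the edge $K(\CC) \to K(\CC\{x_1,\dots,x_{n-1}\})$ is an equivalence by the inductive hypothesis; a pushout of a span whose two legs are equivalences returns that common object, and tracking the structure maps identifies the resulting equivalence $K(\CC) \simeq K(\CC\{x_1,\dots,x_n\})$ with the one induced by the inclusion $\CC \to \CC\{x_1,\dots,x_n\}$, closing the induction. The only step that is not pure bookkeeping is the verification of (D$'$): one has to be a little careful that base change along $\Ss \to \Ss\{x_1,\dots,x_{n-1}\}$ remains faithful on the heart despite this algebra having infinite rank over $\Ss$, and this is exactly where the computation of the relevant cofiber as an infinite wedge of copies of $c$, together with the closure of coconnective objects under such wedges, is needed.
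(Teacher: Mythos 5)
Your proposal is correct and follows the same strategy as the paper: induct on $n$, peel off one free generator via the pushout square with legs $\CC \to \CC[x_0]$ and $\CC \to \CC\{x_1,\dots,x_{n-1}\}$, verify condition (D$'$), and invoke \Cref{thm:pushouts} together with \Cref{thm:a1invariance} and the inductive hypothesis. The only place you diverge is in verifying (D$'$): you unwind the coconnectivity criterion of \Cref{lem:pushout-alt} and compute $\cof(\Id \to b_*b^*)$ explicitly as an infinite wedge of copies of $c$, which is correct but more work than necessary --- the paper instead just observes that both legs admit sections (sending all $x$'s to zero), and a functor with a retraction is split-mono on mapping spectra, so faithfulness on the heart is immediate.
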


\begin{proof}
  We proceed by induction on $n$ with base-case given by \Cref{thm:a1invariance}.
  If we consider the pushout of categories
  \begin{center}
    \begin{tikzcd}
      \CC \ar[r, "i"] \ar[d, "j"] & \CC\{x_1,\dots,x_{n-1}\} \ar[d] \\
      \CC[x] \ar[r] & \CC\{x_1,\dots,x_n\} \pushout.
    \end{tikzcd}
  \end{center}
  then condition (D$'$) holds since the arrows labeled $i$ and $j$ each have a section (sending all the $x$'s to zero). As a consequence we can apply \Cref{thm:pushouts} and conclude.

\end{proof}

\begin{cor}\label{cor:ringa1invariance}
  Let $R$ be a left regular coherent ring,
  then $K(R) = K(R\{x_1,\dots,x_n\})$.
\end{cor}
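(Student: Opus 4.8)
The plan is to deduce Proposition~\ref{thm:free-invariance} (free generator invariance) by induction on $n$, reducing the inductive step to an application of the pushout theorem \Cref{thm:pushouts}. The base case $n=1$ is precisely \Cref{thm:a1invariance}, which already establishes $K(\CC) \simeq K(\CC[x_0]) = K(\CC\{x_1\})$ since a single polynomial generator in degree $0$ is the same as a single free generator.

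For the inductive step, assume the result for $n-1$. The key observation is that the free $\E_1$-algebra $\Ss\{x_1,\dots,x_n\}$ is the coproduct $\Ss\{x_1,\dots,x_{n-1}\} \coprod_{\Ss} \Ss[x]$ in $\E_1$-algebras, since the free algebra functor sends wedges of generators to coproducts. Tensoring with $\CC$, this gives a pushout square in $\Cat^{\perf}$
\begin{center}
  \begin{tikzcd}
    \CC \ar[r, "i"] \ar[d, "j"] & \CC\{x_1,\dots,x_{n-1}\} \ar[d] \\
    \CC[x] \ar[r] & \CC\{x_1,\dots,x_n\}. \pushout
  \end{tikzcd}
\end{center}
To apply \Cref{thm:pushouts} I need to verify condition (D), and for this I would invoke \Cref{lem:pushout-alt}: it suffices to check the simpler condition (D$'$), namely that the functors $\CC^\heart \to \CC\{x_1,\dots,x_{n-1}\}$ and $\CC^\heart \to \CC[x]$ (induced by $i$ and $j$) are faithful. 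This holds because each of $i$ and $j$ admits a retraction — the augmentation sending every free (resp.\ polynomial) generator to zero — so in particular each is split mono on mapping spectra, hence faithful on the heart. (At the level of rings: $\Ss \to \Ss\{x_1,\dots,x_{n-1}\}$ and $\Ss \to \Ss[x]$ are split injections of $\Ss$-bimodules, so the base-change functors are faithful flat in the appropriate sense.)

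Having verified (D$'$), hence (D), \Cref{thm:pushouts} tells us that connective $K$-theory carries the square above to a pushout square
\begin{center}
  \begin{tikzcd}
    K(\CC) \ar[r] \ar[d] & K(\CC\{x_1,\dots,x_{n-1}\}) \ar[d] \\
    K(\CC[x]) \ar[r] & K(\CC\{x_1,\dots,x_n\}). \pushout
  \end{tikzcd}
\end{center}
By the inductive hypothesis the top horizontal map is an equivalence, and by \Cref{thm:a1invariance} the left vertical map is an equivalence. A pushout of spectra along an equivalence produces an equivalence on the parallel map, so the right vertical map $K(\CC[x]) \to K(\CC\{x_1,\dots,x_n\})$ is an equivalence; composing with $K(\CC) \xrightarrow{\simeq} K(\CC[x])$ gives $K(\CC) \simeq K(\CC\{x_1,\dots,x_n\})$, completing the induction.

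The main obstacle is not any single hard computation but rather ensuring that the hypotheses of \Cref{thm:pushouts} genuinely apply: that $\CC$ carries a bounded $t$-structure (given), and that condition (D$'$) holds for the span. The latter is the crux, and it is clean precisely because of the existence of the zero-sections — this is the same phenomenon that makes the polynomial and free constructions "augmented" over $\CC$. One should also be slightly careful that $\CC\{x_1,\dots,x_{n-1}\}$ need not itself admit a bounded $t$-structure, but this is fine: \Cref{thm:pushouts} only requires the bounded $t$-structure on the source $\CC$ of the span, not on the other two corners. The corollary \Cref{cor:ringa1invariance} for a left regular coherent ring $R$ then follows immediately by taking $\CC = \Mod(R)^\omega$, which is regular (hence has a bounded $t$-structure) by \Cref{lem:discreteregular}.
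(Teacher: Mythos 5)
Your proof is correct and matches the paper's own argument essentially step for step: the paper likewise proves Proposition~\ref{thm:free-invariance} by induction on $n$ using the pushout square $\CC \to \CC\{x_1,\dots,x_{n-1}\}$, $\CC \to \CC[x]$, verifies (D$'$) via the zero-section retractions, and applies \Cref{thm:pushouts}, with \Cref{thm:a1invariance} as base case; the corollary then follows by taking $\CC = \Mod(R)^\omega$ via \Cref{lem:discreteregular}. Your write-up is somewhat more explicit about why the $K$-theory pushout square forces the right vertical map to be an equivalence, but the strategy and all the key inputs are identical.
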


In fact, \Cref{cor:ringa1invariance} is a special case of the next example,
which allows for a more general module in place of the indeterminants $x_1,\dots,x_n$.

\begin{exm}\label{exm:tensoralgebra} 
  Suppose that $R$ is a discrete, left regular coherent ring and $M$ is a discrete, right flat $R$-bimodule. We would like to apply \Cref{thm:pushouts} to the span of rings
  \[ R \leftarrow R\{\Sigma^{-1}M\} \to R \]
  whose pushout is $R\{M\}$ and conclude that $K(R) \simeq K(R\{M\})$.
  
  In order to check that the functor $\Mod(R\{\Sigma^{-1}M\}) \to \Mod(R)$ is faithful on the heart we argue as follows: 
  Apply \Cref{thm:opener} to the map $R \to \R\{\Sigma^{-1}M\}$,
  as a consequence of proof of this theorem any object in the heart is an extension of induced objects.
  Examining long exact sequences it suffices to prove faithfulness on the heart for these induced objects.
  This follows from noting that the composite, $R \to R\{\Sigma^{-1}M\} \to R$ is just the identity.
  \tqed
  

\end{exm}

There are many more invariance-type results that can be proven using a combination of \Cref{thm:cat-main} and the Land-Tamme $\odot$-product and we end this subsection with a more generic example.

\begin{exm} \label{exm:other-invariance}
  Suppose we are given a map of discrete rings $R \to S$ with $R$ left regular coherent and an $S$-bimodule $M$ which is right flat over $R$.
  We can form the pullback of $\E_1$-algebras
  \begin{center}
    \begin{tikzcd}
      R\oplus \Sigma^{-1}M \ar[r]\ar[d] & S \ar[d]\\
      R\ar[r] & S\oplus M
    \end{tikzcd}
  \end{center}
  where $S\oplus M$ is a square-zero extension of $S$ by $M$ and $R\oplus\Sigma^{-1}M$ is the square-zero extension of $R$ by $\Sigma^{-1}M$.
  From \Cref{thm:opener} we know that $K(R\oplus\Sigma^{-1}M) \simeq K(R)$ and therefore
  \[ K(S) \simeq K \left( R\odot^{S\oplus M}_{R\oplus \Sigma^{-1}M}S \right). \]

  The underlying $(R,S)$-bimdoule of the $\odot$-product is given by $R \otimes_{R \oplus \Sigma^{-1}M} S$ which is equivalent to $R\{M\}\otimes_RS$. The free algebra $R\{M\}$ is discrete and right flat as an $R$-module since $M$ is, so $R\{M\}\otimes_RS$ is discrete as well.
  By the previous example, which is the case $R=S$, the $\odot$-product receives ring maps from both $R\{M\}$ and $S$.
  It remains then to determine the left multiplication of an element of $S$ by one of $M$. This can be read off using \Cref{exm:LT-old}, which gives a cofiber sequence of $S$-bimodules
  \[S \to R\odot^{S\oplus M}_{R\oplus \Sigma^{-1}M}S \to M\otimes_{R\oplus \Sigma^{-1}M}S,\]
  showing that the left multiplication of $S$ on $M$ is the one coming from the left $S$-module structure.    
  \tqed
\end{exm}

Note that in \Cref{exm:other-invariance} the ring $S$ is not required to be regular!
For example, we can let $R=k$ be a field, take $S = k[\epsilon]/\epsilon^2$ and let $M$ be $k$ thought of as an $S$-bimodule via the augmentation. In this case we obtain an equivalence
\[ K(k[\epsilon]/\epsilon^2) \simeq K(k\{\epsilon,y\}/(\epsilon^2,\epsilon y)). \]

\subsection{\texorpdfstring{$K$}{K}-theory of unipotent representations}
\label{subsec:locsys}\ 

Next we analyze the $K$-theory of categories of local systems with values in a regular category. The following generalizes the discussion in \cite[Section 4.3]{antieau2018ktheoretic}, in which they analyze the $K$-theory of cochain algebras of finite, connected spaces with coefficients in commutative Noetherian rings using the Koszul dual description of the module categories in \cite[Proposition 7.8]{mathew2016galois} as ind-unipotent representations of the loopspace.

\begin{dfn}
  Given $\CC \in \Cat^{\perf}$ and an $X \in \Spaces$, let $\Rep(X; \CC)$
  \todo{I'm slightly bothered by the fact that it is representations of the loopspace of X if X is connected, not X itself.} denote the category of local systems on $X$ with values in $\CC$ (this is just $\Fun(X, \CC)$)\footnote{There is a subtlety here, which is that in general $\Rep(X; \CC)$ and $\Fun(X, \Ind(\CC))^{\omega}$ differ. It is this which motivated us to use $\Rep$ as notation when $\Fun$ would appear to suffice.}. Pullback along the map $X \to *$ provides a functor
  \[(-)^{\mathrm{triv}} : \CC \to \Rep(X; \CC) \]
  which associates to $c \in \CC$ the constant local system at $c$.
  Let $\Rep(X; \CC)^{\mathrm{uni}}$ denote $\im((-)^{\mathrm{triv}})$.
  We refer to this as the category of unipotent local systems valued in $\CC$.
  \tqed
\end{dfn}

\begin{rmk}
  One can make similar definitions for $\AA$ a small abelian category. Namely, we let $\Rep(X;\AA)$ denote $\Fun(X,\AA)$, and let $\Rep(X,\AA)^{\mathrm{uni}}$ denote unipotent representations, i.e the category generated under extensions, kernels and cokernels by the image of $(-)^{\mathrm{triv}}$.
  \tqed
\end{rmk}

\begin{prop}\label{thm:cochainalgs}
	If $\CC\in \Cat_{\geq0}^{\perf}$ is bounded and regular, and $X$ is connected, then
	\begin{enumerate}
		\item Truncation on $\CC$ provides $\Rep(X; \CC)$ a bounded $t$-structure with heart $\Rep(B\pi_1X,\CC^{\heart})$.
		\item The $t$-structure on $\Rep(X; \CC)$ restricts to $\Rep(X; \CC)^{\mathrm{uni}}$, with heart $\Rep(B\pi_1X,\CC^{\heart})^{\mathrm{uni}}$.
		\item $(-)^{\mathrm{triv}}$ induces an equivalence $K(\CC)\simeq K(\Rep(- ; \CC)^{\mathrm{uni}}) $.
	\end{enumerate}  
\end{prop}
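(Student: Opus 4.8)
The plan is to deduce all three parts from the general machinery already developed, applying \Cref{thm:cat-main} to the functor $(-)^{\mathrm{triv}}\colon \CC \to \Rep(X;\CC)^{\mathrm{uni}}$ and then unwinding what the resulting $t$-structure and heart look like concretely. First I would set up the $t$-structure on $\Rep(X;\CC) = \Fun(X,\CC)$ by transporting the $t$-structure on $\CC$ pointwise: declare a local system connective (resp.\ coconnective) if its value at some (equivalently, by connectedness of $X$ and the fact that parallel transport is an equivalence, every) point lies in $\CC_{\geq 0}$ (resp.\ $\CC_{\leq 0}$). Boundedness is inherited since $X$ is a finite-dimensional-enough space — actually one should be slightly careful here; for general connected $X$ the correct statement is that the $t$-structure on $\Ind(\Rep(X;\CC)) \simeq \Fun(X,\Ind(\CC))$ is obtained pointwise and boundedness of objects of $\Rep(X;\CC)$ needs the observation that a compact local system is built from finitely many cells, each contributing a shift of a compact object of $\CC$, hence is bounded. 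The heart consists of local systems valued in $\CC^{\heart}$, i.e.\ functors $X \to \CC^{\heart}$; since $X$ is connected such a functor factors through $B\pi_1 X$ (the nerve of the fundamental groupoid, which has no higher homotopy relevant to a $1$-category target), giving the identification $\Rep(X;\CC)^{\heart} \simeq \Rep(B\pi_1 X, \CC^{\heart})$. This proves (1).

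For (2) and (3), I would verify that $(-)^{\mathrm{triv}}\colon \CC \to \Rep(X;\CC)$ is quasi-affine and unipotent, then invoke \Cref{thm:cat-main} with $\DD = \Rep(X;\CC)^{\mathrm{uni}} = \im((-)^{\mathrm{triv}})$. Quasi-affineness (condition (A)) holds essentially by definition of $\Rep(X;\CC)^{\mathrm{uni}}$ as the thick subcategory generated by the image. For unipotence (condition (B), equivalently (B$'$) via \Cref{lem:condbprime}), the key computation is that the right adjoint to $(-)^{\mathrm{triv}}$ at the level of $\Ind$-categories is the limit (= "cohomology of $X$ with coefficients in the local system", i.e.\ the homotopy fixed points $(-)^{hG}$ for $G = \Omega X$), and the unit $c \to \lim_X c^{\mathrm{triv}} = C^*(X;c)$ is the inclusion of the bottom cell. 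Its cofiber is $\widetilde{C}^*(X;c)$, the reduced cochains, which is coconnective (lives in degrees $\geq 1$) because $X$ is connected, hence lies in $\Ind(\CC)_{\leq -1}$ — this is exactly (B$'$). Once \Cref{thm:cat-main} applies, it hands us a bounded $t$-structure on $\Rep(X;\CC)^{\mathrm{uni}}$ for which $(-)^{\mathrm{triv}}$ is $t$-exact, which must be the restriction of the $t$-structure from part (1); the heart is then the full subcategory of $\Rep(B\pi_1X,\CC^{\heart})$ generated under extensions, kernels and cokernels by the constant representations, which is precisely $\Rep(B\pi_1X,\CC^{\heart})^{\mathrm{uni}}$ — this gives (2). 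Finally (3) is the $K$-theory conclusion of \Cref{thm:cat-main}: the square of $K$-theories commutes with vertical maps equivalences (theorem of the heart) and horizontal maps equivalences (Quillen devissage), so in particular $K(\CC) \xrightarrow{\ \simeq\ } K(\Rep(X;\CC)^{\mathrm{uni}})$, naturally in $X$.

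The main obstacle I anticipate is the careful bookkeeping around boundedness and the precise shape of the heart for an arbitrary connected space $X$ (as opposed to a finite one, which is the case treated in \cite{antieau2018ktheoretic}). Specifically, one must confirm that $\Rep(X;\CC)^{\mathrm{uni}}$ genuinely has a \emph{bounded} $t$-structure — a priori a compact unipotent local system could fail to be bounded if $X$ has unbounded cohomology, but the point is that compactness in $\Rep(X;\CC) = \Fun(X,\CC)$ forces the object to be a finite colimit of corepresentable-type pieces, each of which is a shift of an object pulled back from $\CC$, so boundedness does hold; this is the kind of statement that needs \Cref{prop:small-t} rather than a naive pointwise argument. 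A secondary subtlety, flagged in the paper's own footnote, is keeping straight the distinction between $\Rep(X;\CC)$ and $\Fun(X,\Ind(\CC))^{\omega}$ — one works with $\Ind(\Rep(X;\CC))$ throughout the $t$-structure arguments and only restricts to compacts at the end, exactly as in the proof of \Cref{thm:cat-main}. Everything else is a formal consequence of the already-established results, so the write-up is mostly a matter of identifying the right adjoint and its unit correctly and then citing \Cref{thm:cat-main}, \Cref{lem:condbprime}, and \Cref{prop:small-t}.
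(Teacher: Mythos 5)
Your overall strategy matches the paper's: apply \Cref{thm:cat-main} to the functor $(-)^{\mathrm{triv}}\colon\CC\to\Rep(X;\CC)^{\mathrm{uni}}$, checking quasi-affineness by construction and unipotence by a mapping-space computation. However, there are two places where your argument leans on an identification that the paper's own footnote warns against, namely the claim that $\Ind(\Rep(X;\CC))\simeq\Fun(X,\Ind(\CC))$. For a general connected $X$ this fails precisely because objects of $\Fun(X,\CC)$ need not be compact in $\Fun(X,\Ind(\CC))$ (compactness would require $\lim_X$ to commute with filtered colimits), and this is what motivated the notation $\Rep$ in the first place. Your boundedness argument for part (1), which passes to $\Fun(X,\Ind(\CC))$ and invokes a ``finitely many cells'' picture, runs into exactly this issue: for an infinite $X$ nothing forces a functor $X\to\CC$ to be a finite cell object in $\Fun(X,\Ind(\CC))$, and in any case this is not the relevant notion. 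The paper sidesteps all of this with a one-line argument: evaluation at a point $\Rep(X;\CC)\to\CC$ is $t$-exact and (by connectedness of $X$) conservative, so boundedness descends from the hypothesis that $\CC$ is bounded. You should use that instead; invoking \Cref{prop:small-t} here is neither necessary nor quite applicable.

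The second place the same identification leaks in is your verification of unipotence via condition (B$'$). You identify $F_*F^*(c)$ with $C^*(X;c)$ and its cofiber with reduced cochains, but $F_*$ is the right adjoint of $F^*\colon\Ind(\CC)\to\Ind(\Rep(X;\CC)^{\mathrm{uni}})$, not of the trivial functor into $\Fun(X,\Ind(\CC))$; these can differ for the same reason as above. What \emph{is} true, and what makes the conclusion come out right, is the mapping-space identity: for $c,d\in\CC$ and $d^{\mathrm{triv}},c^{\mathrm{triv}}$ in the full subcategory $\Rep(X;\CC)^{\mathrm{uni}}\subset\Rep(X;\CC)=\Fun(X,\CC)$ one has
\[
\Map_{\Rep(X;\CC)^{\mathrm{uni}}}(d^{\mathrm{triv}},c^{\mathrm{triv}})\simeq\Map_{\CC}(d,c)^{h\Omega X},
\]
and for $c,d\in\CC^{\heart}$ the mapping space on the right is a discrete set with trivial $\Omega X$-action, so the homotopy fixed points recover $\Map_{\CC}(d,c)$. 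This is precisely how the paper checks condition (B) directly, with no detour through $F_*F^*$ or (B$'$), and it is also the cleaner way to make your cochain heuristic rigorous: the fiber sequence you want is visible at the level of mapping spectra without ever needing to compute $F_*F^*$ as an endofunctor of $\Ind(\CC)$. The remainder of your argument (factoring the heart through $B\pi_1X$ because $\CC^{\heart}$ is a $1$-category, and reading off (2) and (3) from \Cref{thm:cat-main}) matches the paper and is fine.
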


\begin{proof}
	For (1), in order to check that $\tau_{\geq 0}$ and $\tau_{<0}$ determine a $t$-structure on $\Rep(X; \CC)$ we just need to check that the space of maps from $\tau_{\geq 0}c$ to $\tau_{<0}d$ is contractible. To do this we use the formula
	\[ \Map_{\Rep(X;\CC)}(\tau_{\geq 0}c, \tau_{<0}d) \simeq \Map_{\CC}(\tau_{\geq 0}c, \tau_{<0}d)^{h\Omega X} \]
	where $\Omega X$ acts on the space of maps in $\CC$ by conjugation.
	Since $\Map_{\CC}(\tau_{\geq 0}c, \tau_{<0}d)$ is contractible, so is the limit under the action.
	Boundedness is inherited from $\CC$ since the underlying object functor $\Rep(X; \CC) \to \CC$ is $t$-exact and conservative. The heart is clearly $\Rep(X;\CC^{\heart})$, and since $\CC$ is a $1$-category, this is the same as $\Rep(\tau_{\leq1}X;\CC^{\heart})$. We conclude since $\tau_{\leq1} X \cong B\pi_1X$.
	
	For (2) and (3), we check that the functor $(-)^{\mathrm{triv}}$ is unipotent (see \Cref{dfn:ncg-defs}), so that we can apply \Cref{thm:cat-main} to conclude. Quasi-affineness follows from construction, and fully faithfulness on the heart follows from the fact that equivariant maps between objects in $\CC^{\heart}$ with trivial $\pi_1(X)$-action are just given by the underlying maps in $\CC^{\heart}$ since it is a $1$-category.
\end{proof}

\begin{rmk}
  If $R$ is an $\E_1$-algebra, then the $R$-module $R$ (with trivial action) is a generator of $\Rep(X; \Mod(R))^{\mathrm{uni}}$, therefore we may identify this category with $\Mod( C^*(X; R) )$, the category of modules over the cochain algebra of $X$ with values in $R$. \Cref{thm:cochainalgs}(3) provides an equivalence
  \[ K(R) \simeq K(C^*(X; R)). \]

  When $X$ is additionally compact and $R$ Noetherian and commutative, the above result combined with \Cref{lem:noetherian-transfer} and vanishing of negative $K$-theory coincides with \cite[Theorem 4.8]{antieau2018ktheoretic}. 
  However, as pointed out to us by Markus Land, their proof is not quite correct, since they claim that the heart of the $t$-structure on $C^*(X;R)$ agrees with that of $R$, which is not true if for example $X = S^1$. Despite this, the hearts are sufficiently similar that Quillen's devissage provides an equivalence on $K$-theory.
  \tqed
\end{rmk}

\subsection{Testing the limits of \Cref{thm:opener}}\ 

In the next sequence of examples we probe the limits of \Cref{thm:opener}.
Summarizing what we find: the conditions of \Cref{thm:opener} are sharp.
To see that regularity of $\pi_0R$ is necessary we look at an example where $\A^1$-invariance fails.

\begin{exm} \label{exm:ugh}
  We consider the exterior algebra $k[\epsilon_{0}, \epsilon_{-1}]$ over a field $k$.
  From \Cref{exm:a1-issue} we have an equivalence of non-connective $K$-theories
  \[ K^{\nc}(k[\epsilon_{0}, \epsilon_{-1}]) \simeq K^{\nc}(k[\epsilon_0]) \oplus \Sigma^{-1} K^{\nc}(\nil_0 \otimes k[\epsilon_0]). \]
  Since $\A^1$-invariance fails for $k[\epsilon_0]$ (see \cite{Hesselholt-Madsen}), both terms in the sum are non-trivial.
  On the other hand \Cref{thm:opener} predicts only the first term.
  \tqed  
\end{exm}

Now we turn to the tor condition of \Cref{thm:opener}. Essentially the simplest example of an algebra which violates it is the trivial square zero extension $S \coloneqq \F_p[x]\oplus \Sigma^{-1}\F_p$ where $x$ is in degree zero and acts by zero on $\F_p$.
In a conversation with Markus Land and Georg Tamme we determined that $K_1(S)$ differs from $K_1$ of $\F_p[x]$ by using the $\odot$-product to reduce to a connective ring and then using trace methods\footnote{A similar analysis also works for $S=\Z\oplus \Sigma^{-1}\F_p$}.

\begin{exm}\label{exm:smalltorexm}  
  The algebra $S$ fits into the pullback square on the left.
  \begin{center}
    \begin{tikzcd}
      \pullback S \ar[r] \ar[d] &\F_p[x]\ar[d] & & K^{\nc}(S)\pullback \ar[r] \ar[d] & K^{\nc}(\F_p[x]) \ar[d] \\
      \F_p[x] \ar[r] & \F_{p}[x]\oplus \F_p & & K^{\nc}(\F_p[x]) \ar[r] & K^{\nc}\left(\F_p[x]\{\F_p\} \right)
    \end{tikzcd}
  \end{center}
  Writing the $\F_p[x]$-bimodule $\F_p[x]\oplus \F_p$ as the tensor product\footnote{(Here $\{M\}$ denotes the free algebra on a bimodule.} $\F_p[x]\otimes_{\F_p[x]\{\Sigma^{-1}\F_p\}}\F_p[x]$  we can apply \Cref{thm:LTformula} to identify $\F_p[x] \odot_{\F_p[x]\{\Sigma^{-1}\F_p\}}^{\F_{p}[x]\oplus \F_p} \F_p[x] $ with $\F_p[x]\{\F_p\}$. From this we obtain the pullback square of $K$-theories on the right.
  
  Using $\A^1$-invariance we have an isomorphism of relative $K$-theories
  \[ \cof( K^{\nc}(\F_p) \to K^{\nc}(S) ) \simeq \Sigma^{-1} \cof( K^{\nc}(\F_p) \to K^{\nc}(\F_p\{\F_p\}) \]
  To conclude that $K_1(S)$ differs from $K_1(\F_p)$ we will argue that $K_2(\F_p\{\F_p\})$ is not even finitely generated.

  Let $R \coloneqq \F_p\{\F_p\}$. 	
  We can construct a DGA model for $R$ which is $\F_p[x]\{y,z\}$ with $|y| = 0, |z|=1$, $d(z)=xy$.
  From this we can compute that
  \begin{itemize}
  \item $\pi_0R \cong \F_p[x,y]/xy$,
  \item $x$ acts by zero on $\pi_1R$ and
  \item $\pi_1R$ is a free $\F_p$-vector space on the classes $y^{a_0}[z,y]y^{a_1}$ with $a_0,a_1\geq 0$.
  \end{itemize}
	
  As a consequence of Waldhausen's calculation of the first nonzero vanishing homotopy group of the fiber of $K(A) \to K(\pi_0A)$ for a connective simplicial ring $A$ (\cite[Proposition 1.2]{Waldhausen}), we learn that the fiber of $K(R) \to K(\pi_0R)$ is $1$-connected, and has second homotopy group given by
  \[ \mathrm{HH}_0(\F_p[x,y]/xy; \pi_1 R) \cong \F_p\{ y^{a}[z,y] \ |\  a\geq0 \}. \]
  Since $K_3(\F_p[x,y]/xy)$ is finitely generated (see \cite{Hesselholt_2007}) we learn that $K_2(R)$ is not finitely generated as promised.
  \tqed
\end{exm}

In the example above, although the $K$-theory differs from that of the connective cover, if we think in terms of \Cref{thm:cat-main} it is not immediately clear at what point things broke down. Possibilities include:
\begin{itemize}
\item The ring failed to be regular (in the sense of \Cref{dfn:ncg-defs}).
\item The base-change functor from the connective cover has failed to be $t$-exact.
\item The base-change failed to be fully faithful on the heart.
\end{itemize}
In view of this we now proceed to give several more geometric examples where we have better control over how things break down.

\begin{exm} \label{exm:An-minus-origin}
  Consider the quasi-affine variety $X \coloneqq \A_k^n \setminus \{0\}$ over a field $k$.
  Since this scheme is quasi-affine, its category of quasicoherent sheaves is equivalent to the category of modules over the ring of global sections, $R$. This is a commutative $k$-algebra whose homotopy groups are the coherent cohomology groups of $\A_k^n \setminus \{0\}$.
  
  In this case we have
  \[ \pi_sR \cong \begin{cases} k[x_1,\dots,x_n] & s=0 \\ (\prod_i x_i^{-1})k[x_1^{-1},\dots,x_n^{-1}] & s=1-n \\ 0 & \text{otherwise} \end{cases}. \]
  The divisible module which shows in degree $1-n$ has tor dimension $n$ and therefore violates condition (2) in \Cref{thm:opener}.
  Applying excision to the scissor congruence $* \to \A_k^n \leftarrow (\A_k^n - 0)$ allows us to conclude that
  \[ K(R) \simeq K(\A_k^n) \oplus \Sigma K(k) \]
  with the comparison map $\pi_0R \to R$ inducing the inclusion of the left summand.
  \tqed
\end{exm}

In \Cref{exm:An-minus-origin}, the tor condition fails and the $K$-theories differ, but $R$ is regular anyway.
What happens here is that the map $\Mod(\pi_0R)^{\heart} \to \Mod(R)^{\heart}$ isn't faithful because the module $k$ supported at the origin is sent to zero.
This example also exhibits another more subtle behavior.
In \cite[Proposition 1.1]{Waldhausen} (which is extended to general connective ring spectra by \cite[Lemma 2.4]{Land_2019}), Waldhausen shows that an $n$-connective map of connective algebras induces an $(n+1)$-connective map on $K$-theory. A similar phenomenon does not occur our setting.
In \Cref{exm:An-minus-origin} the first degree where $R$ differs from its $\pi_0R$ is $1-n$ while the $K$-theory first differs in degree $1$, which is independent of the parameter $n$.

Since \Cref{exm:An-minus-origin} isn't tight with respect to the tor condition we now provide another family of examples which, although more geometrically degenerate, do show that the tor condition is tight. 

\begin{exm} \label{exm:An-doubled}
  Consider $\A^n$ with a doubled origin over the same field $k$,
  that is to say we look at the pullback below
  \begin{center}
    \begin{tikzcd}
      \pullback R \ar[r] \ar[d] & \Gamma(\A^n) \ar[d] \\
      \Gamma(\A^n) \ar[r] & \Gamma(\A^n - 0).
    \end{tikzcd}
  \end{center}
  From our examination of $\Gamma(\A^n - 0)$ in \Cref{exm:An-minus-origin} we know that $\pi_{-n}R$ has tor dimension $n$.
  Since $\Gamma(\A^n) \to \Gamma(\A^n - 0)$ is a localization the induced square on $K$-theory is a pullback (see \cite{Tamme}). From this we can read off that
  \[ K(R) \cong K(\A_k^n) \oplus K(k) \]
  where again the comparison map $\pi_0R \to R$ induces the inclusion of the left summand.
  \tqed
\end{exm}

In the previous two examples the $K$-theory of $R$ and its connective cover differed, but $R$ was still regular (in the sense of \Cref{dfn:ncg-defs}).
Our next example will show that it is possible for $\pi_0R$ to be regular while $R$ is non-regular.
To do this we use an affine nodal cubic curve $C$ over a field $k$, which has non-vanishing $K_{-1}$  (see \cite[III.4.4]{weibel2013k}). The main result of \cite{antieau2018ktheoretic} then implies that the category of perfect coherent sheaves on $C$ is not regular.

\begin{exm} \label{exm:hidden-sing}
  Consider the nodal cubic curve $C \coloneqq \Spec ( k[x,y]/y^2-x^2(x-1) )$ and let $R$ denote the pullback below
  \begin{center}
    \begin{tikzcd}
      \pullback R \ar[r] \ar[d] & k[x^{\pm 1}] \ar[d] \\
      C \ar[r] & C[x^{-1}].
    \end{tikzcd}
  \end{center}
  Geometrically, the bottom horizontal arrow corresponds to removing the nodal point from $\Spec(C)$ and the right vertical arrow corresponds to the quotient of $C$ minus the node by the $C_2$ action that sends $y$ to $-y$.
  The homotopy groups of $R$ are 
  \[ \pi_sR = \begin{cases} k[x] & s=0 \\ (yx^{-1})k[x^{-1}] & s=-1 \\ 0 & \text{otherwise} \end{cases}. \] 
  
  As in the previous example, since the top horizontal arrow is a localization the induced square on $K$-theory is a pullback \cite{Tamme}.
  Since $C[x^{-1}]$ is regular and Noetherian its negative $K$-groups vanish.
  This implies that $K_{-1}(R) \cong K_{-1}(C) \neq 0$. 
  On the other hand $\pi_0R = k[x]$ has vanishing $K_{-1}$.
  Since $K_{-1}(R)$ is non-zero, the category of compact $R$-modules cannot be regular.
  \tqed
\end{exm}

This example demonstrates that when the tor condition is violated, $\Mod(R)^{\omega}$ can fail to be regular in addition to the $K$-theories of $R$ and $\pi_0R$ differing.
In essence what we have done in \Cref{exm:hidden-sing} is taken a singularity and hidden it in degree $-1$. The fact that this can be done implies that the tor condition in \Cref{thm:opener} and the condition that $\pi_0R$ be left regular coherent cannot be disentangled---an idea we explore further in the next example.

\begin{exm}
  Suppose that $R$ is a regular, discrete, Noetherian, commutative algebra.
  Using \Cref{thm:cat-main}, \Cref{exm:a1-issue}, \Cref{exm:coordinateaxesfreealg} and \Cref{thm:a1invariance} we obtain $K$-theory equivalences
  \begin{align*}
    K(R[x_0,x_{-1}]/(x_0x_{-1})) 
    &\simeq K(R) \oplus K(\nil_0 \otimes R) \oplus K(\nil_{-1} \otimes R) \oplus \Sigma^{-1}K(\nil_{1} \otimes R) \\
    &\simeq K(R[x_0]) \oplus K(R[\epsilon_0]) \simeq K(R[\epsilon_0]). 
  \end{align*}
  \tqed
\end{exm}

What distinguishes this example is that
$R[x_0,x_{-1}]/(x_0x_{-1})$ is coconnective, has Noetherian, regular $\pi_0$, but violates the tor condition, while
$R[\epsilon_0]$ is discrete (and therefore satisfies the tor condition), but is non-regular.
This suggests that at the level of nc motives
regularity of $\pi_0R$ and the tor condition are not individually particularly meaningful.
Instead we should think of the combination of these two conditions, i.e. unipotence,  as a meaningful single condition.

\subsection{An example we do not cover}
\label{sec:notcovered}\

We end the paper by giving a simple example of a ring $R$ such that
the connective cover map $\pi_0R \to R$ induces an equivalence on nc motives,
but $R$ does not satisfy the hypotheses of \Cref{thm:opener}.

\begin{exm}
  Let $R$ be the ring $\End_{\F_p[x,y]}((x,y))^{op}$.
  As a module over $\F_p[x,y]$, $(x,y)$ has three cells, two in degree $0$, and one in degree $1$ with attaching maps $x$ and $y$. From this we can compute the homotopy groups of $R$
  \[ \pi_sR \cong \begin{cases} k[x,y] & s=0 \\ (x,y)/(x,y)^2 & s=-1 \\ 0 & \text{otherwise} \end{cases}. \]
  $\F_p[x,y]$ is regular, but $(x,y)/(x,y)^2$ has tor dimension $2$ over $\F_p[x,y]$, so $R$ does not satisfy the conditions of \Cref{thm:opener}.

  Now we proceed to show that $R$ is regular and the connective cover map induces an equivalence of nc motives.
  In working with perfect $R$-modules we identify this category with the thick subcategory of
  $\Mod(\F_p[x,y])^{\omega}$ generated by $(x,y)$.  
  To see that $R$ is regular, first observe that $\F_p$ is connective in the standard $t$-structure for $R$ since it is a retract of $(x,y)\otimes_{\F_p[x,y]}\F_p$. From the extension $(x,y) \to \F_p[x,y] \to \F_p$ we can then conclude that $\F_p[x,y]$ is connective in this $t$-structure as well. This then implies that $\Mod(R)^{\omega}_{\geq0}$ is equivalent to $\Mod(\F_p[x,y])^{\omega}_{\geq0}$.
  Since $(x,y)$ represents the class $1$ in $K_0(\F_p[x,y])$, base-change along the map $\pi_0R = \F_p[x,y] \to R$, which can be identified with the functor $\otimes_{\F_p[x,y]}(x,y)$, induces multiplication by $1$ on the nc motive of $\F_p[x,y]$.
  \tqed
\end{exm}
  

In this example the natural map $K(\pi_0R) \to K(R)$ is an equivalence despite the fact that $R$ doesn't satisfy the conditions of \Cref{thm:opener}. The essential issue here is that $(x,y)$ is not flat, i.e. the base-change functor $\pi_0R \to R$ is not $t$-exact. Since, at its core \Cref{thm:opener} operates using Quillen's devissage theorem it cannot be used for examples of this type.
The equivalence of $K$-theories in this example arises because $R$ is Morita equivalent to its connective cover, which is a different (and less interesting) reason for them to agree.
As noted above, this implies that the connective cover map induces an equivalence of nc motives in this case, something which rarely happens for rings to which one can apply \Cref{thm:opener}. 

Another point contrasting with \Cref{thm:opener} is the fact that the equivalence $K(\pi_0R) \cong K(R)$ is not visible at the level of the homotopy ring of $R$. Indeed, if $R'$ is the trivial square zero extension of $\F_p[x,y]$ by $\Sigma^{-1}(x,y)/(x,y)^2$, then its homotopy ring agrees with $R$, but $K(\pi_0 R') \to K(R')$ is not an equivalence, because the map $\pi_0R' \to R'$ has the map $\F_p[x] \to \F_p[x]\oplus \Sigma^{-1}\F_p$ as a retract, which was shown in \Cref{exm:smalltorexm} to not be a $K$-theory equivalence.

\bibliographystyle{alpha}
\bibliography{ref}

\end{document}